\numberwithin{equation}{section}
\theoremstyle{plain}
    \newtheorem{thm}{Theorem}[section]
    \newtheorem{lem}[thm]{Lemma}
    \newtheorem{prop}[thm]{Proposition}
    \newtheorem{cor}[thm]{Corollary}
    \newtheorem{prob}[thm]{Problem}
    \newtheorem{conj}[thm]{Conjecture}
\theoremstyle{definition}    
    \newtheorem{exmp}[thm]{Example}
    \newtheorem{rem}[thm]{Remark}
\def\rank{\mathrm{rank}}
\def\Coker{\mathrm{Coker}}
\def\dlog{{\mathrm{dlog}}}
\def\dR{{\mathrm{d\hspace{-0.2pt}R}}}            
\def\id{{\mathrm{id}}}              
\def\Image{{\mathrm{Im}}}        
\def\Hom{{\mathrm{Hom}}}  
\def\Ext{{\mathrm{Ext}}}
\def\ker{{\mathrm{Ker}}}          
\def\Pic{{\mathrm{Pic}}}
\def\NS{{\mathrm{NS}}}
\def\End{{\mathrm{End}}}
\def\ord{{\mathrm{ord}}}
\def\reg{{\mathrm{reg}}}          %
\def\Res{\mathrm{Res}}
\def\Spec{{\mathrm{Spec}}}     
\def\rank{{\mathrm{rank}}}
\def\bA{{\mathbb A}}
\def\C{{\mathbb C}}
\def\P{{\mathbb P}}
\def\Q{{\mathbb Q}}
\def\R{{\mathbb R}}
\def\Z{{\mathbb Z}}
\def\cH{{\mathscr H}}
\def\cM{{\mathscr M}}
\def\cD{{\mathscr D}}
\def\O{{\mathscr O}}
\def\vg{\varGamma}
\def\ve{\varepsilon}
\def\lra{\longrightarrow}
\def\hra{\hookrightarrow}
\def\ot{\otimes}
\def\op{\oplus}
\def\wt#1{\widetilde{#1}}
\def\ol#1{\overline{#1}}
\def\os#1#2{\overset{#1}{#2}}
\def\l{\lambda}
\def\Ev{\mathrm{Ev}}
\begin{document}
\title{Regulators of $K_2$ of Hypergeometric Fibrations}
\author{Masanori Asakura}
\address{Department of Mathematics, Hokkaido University, Sapporo, 060-0810 Japan}
\email{asakura@math.sci.hokudai.ac.jp}
\date{2016}
\subjclass[2000]{14D07, 19F27, 33C20 (primary), 11G15, 14K22 (secondary)}
\keywords{Periods, Regulators, Hypergeometric functions}

\maketitle

\section{Introduction}\label{intro-sect}
In the paper \cite{a-o-2}, Otsubo and the author introduced 
a certain class of fibrations of
algebraic varieties which
we named {\it hypergeometric fibrations} (abbreviated HG fibrations,
see \S \ref{HG-defn} for the definition).
In a series of our joint papers \cite{a-o-1}\ldots\cite{a-o-log}, we studied
$K_1$ of HG fibrations and the {\it Beilinson regulators}. 
Our main results are to describe
the regulators via the generalized hypergeometric functions 
\[
{}_pF_{p-1}\left({a_1,\ldots,a_p\atop b_1,\ldots,b_{p-1}};x\right)
=\sum_{n=0}^\infty\frac{(a_1)_n\cdots(a_p)_n}{(b_1)_n\cdots(b_{p-1})_n}\frac{x^n}{n!},
\quad (\alpha)_n:=\Gamma(\alpha+n)/\Gamma(\alpha)
\]
(we refer \cite{Bailey}, \cite{bateman} or \cite{slater} to the reader for the fundamental 
theory on hypergeometric functions).

In this paper we study $K_2$ of HG fibrations.
Let $f:X\to \P^1$ be a HG fibration defined in \S \ref{HG-defn}, and
$X_t=f^{-1}(t)$ a smooth fiber, then we discuss the Beilinson regulator map
\[
\reg:K_2(X_t)\lra H^2_\cD(X_t,\Z(2))
\]
to the Deligne-Beilinson cohomology group (e.g. \cite{schneider}).
We shall discuss the following cases.
\begin{itemize}
\item
$f$ is of Fermat type given in \S \ref{Fermat-sect},
\item
$f$ is of Gauss type given in \S \ref{Gauss-sect}, 
\item 
$f$ is an elliptic fibration (e.g. the Legendre family).
\end{itemize}
In the above cases, there are nontrivial elements in $K_2(X_t)$.
The main theorems are to give explicit descriptions of the regulators by 
linear combinations of the hypergeometric functions of the following types
\[
{}_3F_2\left({a,a,a\atop b,a+1};x\right),
\quad {}_4F_3\left({a,b,1,1\atop 2,2,2};x\right).
\]
The precise formulas are given in Theorems \ref{m-fermat-thm1}, 
\ref{m-fermat-thm2}, \ref{m-fermat-thm3} in \S \ref{m-fermat-sect2}
for the Fermat type, and
in Theorem \ref{m-gauss-thm1}
in \S \ref{m-gauss-sect2} for the Gauss type.

In \S \ref{bei-sect}, we give similar formulas for some elliptic fibrations, 
such as the Legendre family.
With the aid of MAGMA,
we give a number of numerical verifications of Beilinson's conjecture on $L(E,2)$,
the $L$-function of an elliptic curve over $\Q$.
In \cite{RZ},
Rogers and Zudilin proved certain formulas which describes $L(E,2)$ by
special values of hypergeometric functions.
Applying their result, we can obtain a ``theorem'' on Beilinson's conjecture 
for an elliptic curve of conductor $24$. This seems a new approach toward the
the Beilinson conjecture for elliptic curves. The author hopes that,
the study of the Beilinson conjecture by the hypergeometric functions 
will be developed more and bring a new progress.

Finally we note that
there are previous works \cite{otsubo-1}, \cite{otsubo-2}
by Otsubo on hypergeometric functions and
regulators on $K_2$ of Fermat curves.
Our results and method are entirely different from his though
this paper is inspired a lot.

\medskip

The author expresses sincere gratitude to Professor Noriyuki Otsubo
for the stimulating discussion, especially on special values of
$L$-functions of elliptic curves.
He also expresses special thanks to Professor Wadim Zudilin for reading the first
draft carefully and providing the proof of 
Lemma \ref{m-fermat-thm4}.

\section{Hypergeometric Fibrations}\label{HG-sect}
Throughout this paper, we denote the fractional part of $x\in\Q$
by  $\{x\}$ :
\[
\{x\}:=x-\lfloor x\rfloor.
\]
The Gaussian hypergeometric function
\[
{}_2F_1\left({a,b\atop c};x\right)
\]
is simply written by $F(a,b,c;x)$.

\subsection{Definition}\label{HG-defn}
Let $R$ be a finite-dimensional semisimple commutative $\Q$-algebra.
Let $e:R\to E$ be a projection onto a number field $E$.
For a $R$-module $H$, we write \[H(e):=E\ot_{e,R}H,\] and call it the $e$-part of $H$.

Let $X$ be a projective smooth variety over a field $k$.
Let $f:X\to \P^1$ be a surjective morphism over $k$ which is smooth over 
$U\subset \P^1$.
Let $A=\Pic^0_f\to U$ be the Picard scheme over $U$.
We say $f$ is a {\it hypergeometric fibration with multiplication by $(R,e)$} 
(abbreviated HG fibration) if
it is endowed with a ring homomorphism (called a {\it multiplication} by $R$)
\[
R\lra \End_U(A)\ot\Q
\]
and the following conditions hold. We fix an inhomogeneous coordinate $t\in \P^1$.
\begin{itemize}
\item $f$ is smooth outside $t=0,1,\infty$, hence we may take 
$U=\P^1\setminus\{0,1,\infty\}$.
\item
Denote by $A(e)\to U$ the $e$-part of the abelian fibration
which corresponds to the $e$-part
$(R^1f_*\Q_l)(e)$ of a $l$-adic sheaf,
\[
T_l A(e)\ot\Q\cong R^1f_*\Q_l(e).
\]
Then $\dim(A(e)/U)=[E:\Q]$ or equivalently $\dim_{\Q_l}(R^1f_*\Q_l)(e)=2[E:\Q]$. 
\item
The abelian fibration $A(e)\to U$ has a totally degenerate semistable reduction at $t=1$.
\end{itemize}

The last condition is equivalent to say that
the local monodromy $T$ on $(R^1f_*\Q_l)(e)$ at $t=1$ is unipotent
and the rank of log monodromy $N:=\log(T)$ is maximal, namely 
$\rank(N)=\frac{1}{2}\dim_\Q (R^1f_*\Q_l)(e)$
($=[E:\Q]$ by the second condition).

\begin{exmp}[Elliptic fibrations]
The simplest example of hypergeometric fibrations is an elliptic fibration $f:X\to\P^1$ 
which satisfies that $f$ is smooth over $\P^1\setminus\{0,1,\infty\}$ and has a multiplicative reduction at $t=1$.
In this case we take $R=E=\Q$, $e=\id$.
We refer the reader to \cite{hirzebruch} for the classification of elliptic fibrations
over $\P^1$ which has singular fibers at most over 3 points.
In particular, there are 14 cases of such fibrations which have at least one multiplicative reduction.
\end{exmp}

\subsection{HG fibration of Fermat type}\label{Fermat-sect}
Suppose that the characteristic of $k$ is $0$. 
Let $f:X\to \P^1$ be the fibration such that the general fiber $X_t=f^{-1}(t)$ is a 
smooth projective curve defined by an equation 
\[(x^n-1)(y^m-1)=1-t,\quad n,m\geq 2.\]
We call $f$ a fibration of Fermat type\footnote{
The reason why we call ``Fermat type'' is that
the fiber over $t=0$ is
\[
(x^n-1)(y^m-1)=1\quad \Longleftrightarrow \quad x^{-n}+y^{-m}=1,
\] 
hence the Fermat curve appears in the degenerating fiber.}.
One can show
$g(X_t)=(n-1)(m-1)$ (e.g. by the Hurwitz formula).
Moreover $f$ is smooth outside $t=0,1,\infty$,
and $f$ has a 
totally degenerate semistable reduction at $t=1$. 
We denote by $\mu_k\subset \ol{k}^\times$ 
the group of $k$-th roots of unity.
Suppose $\mu_n,\mu_m\subset k^\times$.
The action $(x,y,t)\mapsto (\zeta_nx,\zeta_my,t)$ for $(\zeta_n,\zeta_m)\in\mu_n\times\mu_m$
gives a multiplication by the group ring $R=\Q[\mu_n\times\mu_m]$.
If $e:R\to E$ factors through
projections $\mu_n\times\mu_m\to\mu_n$ or $\mu_n\times\mu_m\to\mu_m$,
then $H^1(X_t)(e)=0$. Therefore
\[
H^1(X_t)=\bigoplus_e H^1(X_t)(e)
\]
where $e$ does not factor through
projections $\mu_n\times\mu_m\to\mu_n$ or $\mu_n\times\mu_m\to\mu_m$.
\begin{lem}\label{Fermat-lem1}
Put
\[
\omega_{i,j}:=x^{i-1}y^{j-1}\frac{m^{-1}dx}{y^{m-1}(x^n-1)}
=-x^{i-1}y^{j-1}\frac{n^{-1}dy}{x^{n-1}(y^m-1)}
\]
for $i,j\in\Z$.
Then 
$\vg(X_t,\Omega^1_{X_t})$ is $(n-1)(m-1)$-dimensional with basis
$\{\omega_{i,j}\mid 1\leq i\leq n-1,\,1\leq j\leq m-1\}$.
Hence 
\[
\dim_EH^1(X_t)(e)=\begin{cases}
0&\mbox{
$e$ factoring through
$\mu_n\times\mu_m\to\mu_n$ or $\mu_n\times\mu_m\to\mu_m$}\\
1&\mbox{others.}
\end{cases}
\]$f$ is a HG fibration with multiplication by $(R,e)$
if and only if $\dim_EH^1(X_t)(e)=1$, and then
\[
\vg(X_t,\Omega^1_{X_t})(e)=\bigoplus_{(i,j)\in I_e}k\cdot\omega_{i,j}
\]
\begin{equation}\label{Fermat-lem1-eq1}
I_e:=\{([si_0]_n,[sj_0]_m)\mid s\in(\Z/nm\Z)^\times\}
\end{equation}
where $(i_0,j_0)$ is a fixed index such that a homomorphism 
$R\to k$, $(\zeta_n,\zeta_m)\mapsto \zeta_n^{i_0}\zeta_m^{j_0}$ factors through $e$,
and $[a]_n$ denotes the unique integer such that $[a]_n\equiv a$ mod $n$ and
$0\leq [a]_n<n$.
\end{lem}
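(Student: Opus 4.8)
The plan is to realize $X_t$ as a cyclic cover of the line and to read off the holomorphic forms from their divisors. First I would check that the two expressions for $\omega_{i,j}$ coincide. Since $t$ is constant along $X_t$, differentiating $(x^n-1)(y^m-1)=1-t$ gives $nx^{n-1}(y^m-1)\,dx=-m(x^n-1)y^{m-1}\,dy$; multiplying both sides by $\big(nm\,x^{n-1}y^{m-1}(x^n-1)(y^m-1)\big)^{-1}$ yields $\dfrac{m^{-1}dx}{y^{m-1}(x^n-1)}=-\dfrac{n^{-1}dy}{x^{n-1}(y^m-1)}$, and multiplying by $x^{i-1}y^{j-1}$ gives the asserted identity. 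Solving the defining equation for $y$ presents $X_t$ as the $\mu_m$-cover $y^m=(x^n-t)/(x^n-1)$ of $\P^1_x$, totally ramified over the $2n$ points $\{x^n=t\}$ and $\{x^n=1\}$ and unramified over $x=0,\infty$; Riemann--Hurwitz then recovers $g(X_t)=(n-1)(m-1)$.

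To prove $\omega_{i,j}\in\vg(X_t,\Omega^1_{X_t})$ for $1\le i\le n-1$, $1\le j\le m-1$, I would compute $\ord_P\omega_{i,j}$ from the first expression $\dfrac{x^{i-1}}{m(x^n-1)}\,y^{j-m}\,dx$ at each boundary point, using $\ord_P(y)=-1$ over $x^n=1$, $\ord_P(y)=+1$ over $x^n=t$, and $\ord_P(dx)=m-1$ at the totally ramified points. This gives $\ord_P\omega_{i,j}=m-j-1$ over $x^n=1$, $=j-1$ over $x^n=t$, $=n-i-1$ over $x=\infty$ (where $y^m\to1$), and $=i-1$ over $x=0$; all are $\ge 0$ in the stated ranges, and elsewhere $\omega_{i,j}$ is visibly regular.

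As there are exactly $(n-1)(m-1)=g(X_t)$ such forms, and they are $k$-linearly independent because $1,y,\dots,y^{m-1}$ is a basis of the function field over $k(x)$ and the occurring $x$-powers are distinct, they span $\vg(X_t,\Omega^1_{X_t})$. For the equivariance, $(\zeta_n,\zeta_m)$ fixes $(x^n-1)^{-1}$, sends $x^{i-1}dx$ to $\zeta_n^{\,i}x^{i-1}dx$, and, since $\zeta_m^{-m}=1$, sends $y^{j-m}$ to $\zeta_m^{\,j}y^{j-m}$; hence $\omega_{i,j}$ spans the eigenspace for the character $\zeta_n^i\zeta_m^j$.

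Every character of $\mu_n\times\mu_m$ occurring in $\vg(X_t,\Omega^1_{X_t})$ has both components nontrivial, and by Serre duality the quotient $H^1(X_t,\O_{X_t})$ carries the contragredient characters $\chi_{-i,-j}$, which again have both components nontrivial; so $H^1(X_t)(e)=0$ whenever $e$ factors through $\mu_n\times\mu_m\to\mu_n$ or $\to\mu_m$. For the remaining $e$ I would identify the idempotent with the Galois orbit $I_e=\{([si_0]_n,[sj_0]_m):s\in(\Z/nm\Z)^\times\}$ of $(i_0,j_0)$ and collect the corresponding eigenforms, obtaining $\vg(X_t,\Omega^1_{X_t})(e)=\bigoplus_{(i,j)\in I_e}k\cdot\omega_{i,j}$; since $|I_e|=[E:\Q]$ this piece is free of rank one over $E\ot_\Q k$, and, together with its conjugate $H^1(\O)$-piece, it accounts for $\dim_{\Q_l}(R^1f_*\Q_l)(e)=2[E:\Q]$, which is exactly the HG condition. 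The \emph{main obstacle} is the divisor computation of the second step: one must fix local parameters at the totally ramified points and at infinity and correctly track the order of $dx$ under ramification, and it is precisely there that the ranges $1\le i\le n-1$ and $1\le j\le m-1$ appear as the regularity bounds.
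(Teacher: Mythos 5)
The paper itself contains no argument for this lemma: its ``proof'' is the single line ``See \cite{a-o-2} \S 3.3'', so there is no in-text reasoning to compare yours against, and your write-up serves as a self-contained substitute. It is correct. The route --- rewriting the fiber as the cyclic cover $y^m=(x^n-t)/(x^n-1)$ of $\P^1_x$, getting $g(X_t)=(n-1)(m-1)$ from Riemann--Hurwitz, and proving regularity of the $\omega_{i,j}$ by order computations at the four kinds of boundary points --- is the standard one for such covers. I checked the details: the orders $m-j-1$ (over $x^n=1$), $j-1$ (over $x^n=t$), $i-1$ (over $x=0$) and $n-i-1$ (over $x=\infty$) are all right; the independence argument via $1,y,\dots,y^{m-1}$ works (the cover is irreducible since it is totally ramified somewhere, so this is indeed a basis of the function field); and the equivariance $\sigma(\zeta_1,\zeta_2)^*\omega_{i,j}=\zeta_1^i\zeta_2^j\omega_{i,j}$ is correct, with the ambiguity between a character and its inverse harmless because $I_e=-I_e$.

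Two points should be made explicit. First, the displayed claim $\dim_EH^1(X_t)(e)=1$ can only refer to the coherent piece (equivalently $\dim A(e)=[E:\Q]$ for the Picard fibration): for Betti or $l$-adic cohomology the $e$-part has $E$-rank $2$, as your own count $\dim_{\Q_l}(R^1f_*\Q_l)(e)=2[E:\Q]$ shows. Your reading is the right one, but say so. Second, your closing identification of the rank count with ``exactly the HG condition'' covers only the second condition of the definition in \S\ref{HG-defn}; the ``if'' direction of the equivalence also needs the third condition, that $A(e)\to U$ is totally degenerate semistable at $t=1$. That does not follow from your dimension computation; it follows from the geometric fact asserted in \S\ref{Fermat-sect} (and proved in \cite{a-o-2}) that $f^{-1}(1)$ is a totally degenerate semistable fiber, which forces the log monodromy to have maximal rank on every $e$-part. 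With that fact quoted, your proof is complete.
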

\begin{proof}
See \cite{a-o-2} \S 3.3.
\end{proof}
Suppose that the base field is $\C$. 
Let $\ve_1\in \mu_n$ and $\ve_2\in \mu_m$, and let $P(\ve_1,\ve_2)$ denotes the
singular point $(x,y)=(\ve_1,\ve_2)$ of $f^{-1}(1)$.
Let $\delta(\ve_1,\ve_2)\in H_1(X_t,\Z)$ be the vanishing cycle at $t=1$
which ``converges to
$P(\ve_1,\ve_2)$'', namely it is a homology cycle characterized by
\[
\frac{1}{(2\pi \sqrt{-1})^2}\oint_{t=1}\int_{\delta(\ve_1,\ve_2)}\omega
=\Res_P(\omega),\quad
\forall\,\omega\in H^2_\dR({\mathscr X}^*)
\]
where ${\mathscr X}^*$ is the tubular neighborhood of $f^{-1}(1)$ and
$\Res_P:H^2_\dR({\mathscr X}^*)\to\C$ is the Poincare residue map at $P=P(\ve_1,\ve_2)$.

For the later use, we here give a down-to-earth description of 
a path $\delta(\ve_1,\ve_2)$.
For $(\zeta_1,\zeta_2)\in\mu_n\times\mu_m$, we denote by $\sigma(\zeta_1,\zeta_2)$
the automorphisms of $X_t$ given by
$(x,y)\mapsto(\zeta_1 x,\zeta_2y)$.
Suppose $|t-1|\ll1$ and fix $\sqrt[n]{t}$. 
Let $Q_1(x,y)=(1,\infty)$ and $Q_t(x,y)=(\sqrt[n]{t},0)$ be points of $X_t$.
Define a (unique) path $u$ from $Q_t$ to $Q_1$ such that
the projection onto the $y$-plane is a line $\mathrm{arg}(y)=-\pi/m$
from $y=0$ to $y=\infty$. 
Put
\begin{equation}\label{Fermat-eq1}
\delta(1,1):=(1-\sigma(1,e^{\frac{2\pi \sqrt{-1}}{m}} ))u,\quad \delta(\ve_1,\ve_2)
:=\sigma(\ve_1,\ve_2)\delta(1,1).
\end{equation}

\medskip

{\unitlength 0.1in%
\begin{picture}( 51.7000, 29.9000)(  2.1000,-38.1000)%
%
\special{pn 4}%
\special{sh 1}%
\special{ar 1590 3810 8 8 0  6.28318530717959E+0000}%
\special{sh 1}%
\special{ar 5380 1650 8 8 0  6.28318530717959E+0000}%
%
\special{pn 8}%
\special{pa 840 2170}%
\special{pa 3470 1500}%
\special{fp}%
%
\special{pn 8}%
\special{pa 840 2170}%
\special{pa 3470 1500}%
\special{fp}%
%
\special{pn 8}%
\special{pa 840 2170}%
\special{pa 3470 1500}%
\special{fp}%
%
\special{pn 8}%
\special{pa 830 2180}%
\special{pa 3460 2850}%
\special{fp}%
%
\special{pn 8}%
\special{pa 2380 1610}%
\special{pa 1870 1770}%
\special{fp}%
\special{sh 1}%
\special{pa 1870 1770}%
\special{pa 1940 1769}%
\special{pa 1921 1754}%
\special{pa 1928 1731}%
\special{pa 1870 1770}%
\special{fp}%
\special{pa 1880 2550}%
\special{pa 2400 2690}%
\special{fp}%
\special{sh 1}%
\special{pa 2400 2690}%
\special{pa 2341 2653}%
\special{pa 2349 2676}%
\special{pa 2330 2692}%
\special{pa 2400 2690}%
\special{fp}%
%
\special{pn 8}%
\special{pa 210 2180}%
\special{pa 4060 2180}%
\special{fp}%
\special{sh 1}%
\special{pa 4060 2180}%
\special{pa 3993 2160}%
\special{pa 4007 2180}%
\special{pa 3993 2200}%
\special{pa 4060 2180}%
\special{fp}%
\put(41.9000,-21.9000){\makebox(0,0){$\R$}}%
\put(13.8000,-30.8000){\makebox(0,0)[lb]{Figure of $\delta(1,1)$}}%
%
\special{pn 8}%
\special{pa 830 3060}%
\special{pa 830 1010}%
\special{fp}%
\special{sh 1}%
\special{pa 830 1010}%
\special{pa 810 1077}%
\special{pa 830 1063}%
\special{pa 850 1077}%
\special{pa 830 1010}%
\special{fp}%
\put(6.5000,-9.5000){\makebox(0,0)[lb]{$\sqrt{-1}\R$}}%
%
\special{pn 8}%
\special{ar 828 2176 640 640  6.0343782  0.0031250}%
\put(17.6000,-20.7000){\makebox(0,0){$\frac{\pi}{m}$}}%
\end{picture}}%

\vspace{-1.5cm}

\begin{lem}\label{Fermat-lem2}
\[
\int_{\delta(\ve_1,\ve_2)}\omega_{i,j}=-\frac{\ve^i_1\ve^j_2}{nm}\cdot
2\pi \sqrt{-1}F\left(1-\frac{i}{n},1-\frac{j}{m},1;1-t\right).
\]
\end{lem}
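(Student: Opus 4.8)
The plan is to reduce the whole computation to the single integral $\int_u\omega_{i,j}$ over the explicit path $u$ and then to recognize that integral as an Euler-type representation of the hypergeometric function $F$.

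I would first exploit the $\mu_n\times\mu_m$-action. A one-line substitution into the $dx$-expression of $\omega_{i,j}$, using $\zeta_1^n=\zeta_2^m=1$, gives $\sigma(\zeta_1,\zeta_2)^*\omega_{i,j}=\zeta_1^i\zeta_2^j\,\omega_{i,j}$. Combined with the relation $\delta(\ve_1,\ve_2)=\sigma(\ve_1,\ve_2)\delta(1,1)$ from \eqref{Fermat-eq1} and the change-of-variables rule $\int_{\sigma_*\gamma}\omega=\int_\gamma\sigma^*\omega$, this yields $\int_{\delta(\ve_1,\ve_2)}\omega_{i,j}=\ve_1^i\ve_2^j\int_{\delta(1,1)}\omega_{i,j}$, so it suffices to treat $(\ve_1,\ve_2)=(1,1)$. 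Writing $\delta(1,1)=(1-\sigma(1,e^{2\pi\sqrt{-1}/m}))u$ and applying the same equivariance once more gives $\int_{\delta(1,1)}\omega_{i,j}=(1-e^{2\pi\sqrt{-1}j/m})\int_u\omega_{i,j}$ (both $u$ and $\sigma(1,e^{2\pi\sqrt{-1}/m})u$ run from $Q_t$ to $Q_1$, so the difference is indeed a closed cycle). The problem is thereby reduced to evaluating $\int_u\omega_{i,j}$.

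Next I would parametrize $u$ by $y$, using the defining equation in the form $x^n=(y^m-t)/(y^m-1)$, which sends $y=0$ to $x=\sqrt[n]{t}$ and $y=\infty$ to $x=1$, matching the endpoints $Q_t,Q_1$. Substituting into the $dy$-expression $\omega_{i,j}=-\tfrac1n x^{i-n}\tfrac{y^{j-1}}{y^m-1}dy$ and then setting $w=y^m$ and $s=1/(1-w)$, I expect $\int_u\omega_{i,j}$ to collapse — after all powers of $s$ cancel — to $\tfrac{1}{nm}e^{-\pi\sqrt{-1}j/m}\int_0^1 s^{-j/m}(1-s)^{j/m-1}(1-(1-t)s)^{-(1-i/n)}\,ds$. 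By Euler's integral representation this equals $\tfrac{1}{nm}e^{-\pi\sqrt{-1}j/m}\,\Gamma(1-\tfrac jm)\Gamma(\tfrac jm)\,F(1-\tfrac in,1-\tfrac jm,1;1-t)$. The reflection formula $\Gamma(1-\tfrac jm)\Gamma(\tfrac jm)=\pi/\sin(\pi j/m)$ together with $1-e^{2\pi\sqrt{-1}j/m}=-2\sqrt{-1}\,e^{\pi\sqrt{-1}j/m}\sin(\pi j/m)$ then makes all phases and the sine cancel, leaving exactly $-\tfrac{2\pi\sqrt{-1}}{nm}F(1-\tfrac in,1-\tfrac jm,1;1-t)$; multiplying back by $\ve_1^i\ve_2^j$ gives the stated identity.

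The hard part will be the branch bookkeeping. Along $u$ one has $\arg y=-\pi/m$, hence $w=y^m$ runs along the ray $\arg w=-\pi$, and each of the multivalued factors $w^{j/m-1}$, $(w-1)^{-i/n}$, $(w-t)^{-(1-i/n)}$ must be evaluated on that determination (all three arguments equal to $-\pi$, using $|t-1|\ll1$ so that $w-t$ stays negative). Collecting these phases is precisely what produces the constant $e^{-\pi\sqrt{-1}j/m}$. The delicate points are (i) that the $i/n$-dependent phases from $(w-1)^{-i/n}$ and $(w-t)^{-(1-i/n)}$ cancel, so no spurious $i$-dependent constant survives, and (ii) that the chosen branch is consistent with $x=\sqrt[n]{t}$ at $y=0$ (which one checks by verifying $x^{i-n}=t^{(i-n)/n}$ there under the $\arg=-\pi$ determination), so that the $n$-th-root ambiguity in passing from $x^{i-n}$ to $(x^n)^{(i-n)/n}$ contributes no extra root of unity. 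Once these are pinned down, the rest is the routine reduction to Euler's integral described above.
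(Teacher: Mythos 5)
Your proposal is correct and follows essentially the same route as the paper's proof: reduce by $\mu_n\times\mu_m$-equivariance to $\int_u\omega_{i,j}$, eliminate $x$ via $x^n=(y^m-t)/(y^m-1)$, and transform the resulting integral into Euler's representation of $F(1-\tfrac in,1-\tfrac jm,1;1-t)$, with the phase $e^{-\pi\sqrt{-1}j/m}$ and the factor $\Gamma(\tfrac jm)\Gamma(1-\tfrac jm)=\pi/\sin(\pi j/m)$ combining against $1-e^{2\pi\sqrt{-1}j/m}$ to give $-2\pi\sqrt{-1}$. The only cosmetic difference is that you compress the paper's chain of substitutions (rotation to the positive real axis, $y\mapsto y^m$, shift, inversion) into the single change of variables $s=1/(1-y^m)$ while carrying the $\arg=-\pi$ branch determinations, which is an equivalent bookkeeping of the same computation.
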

\begin{proof}
Since 
\[
\int_{\delta(\ve_1,\ve_2)}\omega_{i,j}
=\int_{\delta(1,1)}\sigma(\ve_1,\ve_2)\omega_{i,j}
=\ve^i_1\ve^j_2\int_{\delta(1,1)}\omega_{i,j}
\]
we only need to show the case $\delta(1,1)$. Write $\zeta_m:=e^{2\pi \sqrt{-1}/m}$
and $\zeta_{2m}:=e^{\pi \sqrt{-1}/m}$
\begin{align*}
\int_{\delta(1,1)}\omega_{i,j}&=
(1-\zeta_m^j)\int_u\omega_{i,j}\\
&=-(1-\zeta_m^j)\int_ux^{i-1}y^{j-1}\frac{n^{-1}dy}{x^{n-1}(y^m-1)}\\
&=(1-\zeta_m^j)\int_uy^{j-1}
\left(\frac{t-y^m}{1-y^m}\right)^{\frac{i}{n}-1}\frac{n^{-1}dy}{1-y^m}\\
&=\frac{1-\zeta_m^j}{n}\int_uy^{j-1}(1-y^m)^{-\frac{i}{n}}(t-y^m)^{\frac{i}{n}-1}dy\\
&=\frac{\zeta_{2m}^{-j}-\zeta_{2m}^j}{n}\int_0^\infty
y^{j-1}(1+y^m)^{-\frac{i}{n}}(t+y^m)^{\frac{i}{n}-1}dy\\
\end{align*}
\begin{align*}
&=\frac{\zeta_{2m}^{-j}-\zeta_{2m}^j}{nm}\int_0^\infty
y^{\frac{j}{m}-1}(1+y)^{-\frac{i}{n}}(t+y)^{\frac{i}{n}-1}dy\\
&=\frac{\zeta_{2m}^{-j}-\zeta_{2m}^j}{nm}\int_1^\infty
(y-1)^{\frac{j}{m}-1}y^{-\frac{i}{n}}(t-1+y)^{\frac{i}{n}-1}dy\\
&=\frac{\zeta_{2m}^{-j}-\zeta_{2m}^j}{nm}\int_0^1
(y^{-1}-1)^{\frac{j}{m}-1}y^{\frac{i}{n}-2}(t-1+y^{-1})^{\frac{i}{n}-1}dy\\
&=\frac{\zeta_{2m}^{-j}-\zeta_{2m}^j}{nm}\int_0^1
(1-y)^{\frac{j}{m}-1}y^{-\frac{j}{m}}(1-(1-t)y)^{\frac{i}{n}-1}dy\\
&=\frac{\zeta_{2m}^{-j}-\zeta_{2m}^j}{nm}B\left(\frac{j}{m},1-\frac{j}{m}\right)
F\left(1-\frac{i}{n},1-\frac{j}{m},1,1-t\right)\\
&=\frac{\zeta_{2m}^{-j}-\zeta_{2m}^j}{nm}B\left(\frac{j}{m},1-\frac{j}{m}\right)
F\left(1-\frac{i}{n},1-\frac{j}{m},1,1-t\right)\\
&=-\frac{2\pi\sqrt{-1}}{nm}
F\left(1-\frac{i}{n},1-\frac{j}{m},1,1-t\right).
\end{align*}
\end{proof}

\begin{lem}\label{Fermat-lem3}
Let $T_1$ be the local monodromy at $t=1$.
There is a unique homology cycle $\gamma(\ve_1,\ve_2)\in H_1(X_t,\Q)$
such that $(T_1-1)\gamma(\ve_1,\ve_2)=\delta(\ve_1,\ve_2)$ and
\[
\int_{\gamma(\ve_1,\ve_2)}\omega_{i,j}=
\frac{\ve^i_1\ve^j_2}{nm}
B\left(1-\frac{i}{n},1-\frac{j}{m}\right)
F\left(1-\frac{i}{n},1-\frac{j}{m},2-\frac{i}{n}-\frac{j}{m};t\right).
\]
\end{lem}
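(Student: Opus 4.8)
The plan is to reduce to the base point $(\ve_1,\ve_2)=(1,1)$, identify the target function as the distinguished logarithmic solution of the hypergeometric equation at $t=1$, realize it by an honest cycle, and then read off the relation $(T_1-1)\gamma=\delta$ from the monodromy of that solution. For the reduction I would use equivariance: since $\sigma(\ve_1,\ve_2)^*\omega_{i,j}=\ve_1^i\ve_2^j\,\omega_{i,j}$ and $\delta(\ve_1,\ve_2)=\sigma(\ve_1,\ve_2)\delta(1,1)$ as in \eqref{Fermat-eq1}, it suffices to construct one cycle $\gamma(1,1)$ and to set $\gamma(\ve_1,\ve_2):=\sigma(\ve_1,\ve_2)\gamma(1,1)$; the factor $\ve_1^i\ve_2^j$ and the relation with $\delta(\ve_1,\ve_2)$ then propagate automatically. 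So I must produce $\gamma(1,1)\in H_1(X_t,\Q)$ with $(T_1-1)\gamma(1,1)=\delta(1,1)$ and $\int_{\gamma(1,1)}\omega_{i,j}=\frac1{nm}B(a,b)F(a,b,a+b;t)$, writing $a=1-i/n$, $b=1-j/m$.

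The analytic core is that both $F(a,b,1;1-t)$ (the function appearing in Lemma \ref{Fermat-lem2}) and $F(a,b,a+b;t)$ solve one and the same second order Fuchsian equation: in each case the exponents are $\{0,0\}$ at $t=1$, $\{0,1-a-b\}$ at $t=0$ and $\{a,b\}$ at $t=\infty$, and this is the Picard–Fuchs equation of $\int_\Gamma\omega_{i,j}$. The logarithmic connection formula at $c=a+b$ gives $F(a,b,a+b;t)=-B(a,b)^{-1}\log(1-t)\,F(a,b,1;1-t)+(\text{holomorphic at }t=1)$, so the local monodromy $T_1$ sends $F(a,b,a+b;t)$ to $F(a,b,a+b;t)-2\pi\sqrt{-1}\,B(a,b)^{-1}F(a,b,1;1-t)$, while $F(a,b,1;1-t)$ is $T_1$-invariant. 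Multiplying by $\frac1{nm}B(a,b)$ and comparing with $\int_{\delta(1,1)}\omega_{i,j}=-\frac{2\pi\sqrt{-1}}{nm}F(a,b,1;1-t)$ from Lemma \ref{Fermat-lem2}, one finds that $\psi_{i,j}(t):=\frac1{nm}B(a,b)F(a,b,a+b;t)$ satisfies $T_1\psi_{i,j}=\psi_{i,j}+\int_{\delta(1,1)}\omega_{i,j}$. This is precisely the numerical content of $(T_1-1)\gamma(1,1)=\delta(1,1)$, and it explains why the constant $B(a,b)$ must appear.

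Next I would realize $\psi_{i,j}$ by a genuine cycle. Paralleling the path $u$ and the closing-up $(1-\sigma(1,e^{2\pi\sqrt{-1}/m}))u$ used for $\delta(1,1)$, I would take a path in the $y$-plane joining the relevant branch points of the integrand and close it by the same $1-\sigma$ operation to obtain a rational $1$-cycle $\gamma(1,1)$; evaluating $\int_{\gamma(1,1)}\omega_{i,j}$ by the substitutions of Lemma \ref{Fermat-lem2} converts it to $\frac1{nm}\int_0^1 y^{b-1}(1-y)^{a-1}(1-ty)^{-a}\,dy=\frac1{nm}B(a,b)F(a,b,a+b;t)=\psi_{i,j}(t)$ for every $i,j$. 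With the period of $\gamma(1,1)$ known against every holomorphic form, the monodromy computation above shows that $(T_1-1)\gamma(1,1)-\delta(1,1)$ pairs to zero with all $\omega_{i,j}$, hence (taking complex conjugates as well) vanishes in $H_1(X_t,\C)$, giving $(T_1-1)\gamma(1,1)=\delta(1,1)$. Uniqueness is then immediate: if $\gamma$ and $\gamma'$ both satisfy the period formula, their difference pairs to zero with every $\omega_{i,j}$ and its conjugate, so it is zero in $H_1(X_t,\C)\supset H_1(X_t,\Q)$.

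I expect the main obstacle to lie in the explicit construction of the closed rational cycle $\gamma(1,1)$, together with the bookkeeping that makes the Euler integral come out with exactly the constant $B(a,b)$ and the correct branch of $(1-ty)^{-a}$, and in matching the orientation of $T_1$ with the sign of $2\pi\sqrt{-1}$ in Lemma \ref{Fermat-lem2}. A purely abstract alternative—taking any rational $\gamma_0$ with $(T_1-1)\gamma_0=\delta(1,1)$, which exists because the totally degenerate reduction makes $N=T_1-1$ of maximal rank with $\delta(1,1)\in\Image N$, and then correcting by a $T_1$-invariant cycle to kill the residual $F(a,b,1;1-t)$-term—runs into a rationality subtlety in the correction term that the explicit construction circumvents.
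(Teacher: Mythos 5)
Your overall architecture (reduce to $(\ve_1,\ve_2)=(1,1)$ by equivariance, use the logarithmic connection formula at $c=a+b$, pin a real class down by its periods against all $\omega_{i,j}$ and their conjugates) is sensible, and your uniqueness argument is correct and even cleaner than the paper's one-line remark. But the step you yourself flag as ``the main obstacle'' --- the explicit construction of the rational cycle $\gamma(1,1)$ --- is not mere bookkeeping: as proposed, it fails. Any cycle obtained by closing a path $u'$ with the group operation has periods of the form $\int_{(1-\sigma(1,\zeta_m))u'}\omega_{i,j}=(1-\zeta_m^j)\int_{u'}\omega_{i,j}$, since $\sigma(1,\zeta_m)^*\omega_{i,j}=\zeta_m^j\,\omega_{i,j}$ (and closing in both group directions, which is what is actually needed here because the endpoints of the relevant path lying over $w=y^m\in[1,\infty]$ are \emph{not} fixed by a single $\sigma$, produces the factor $(1-\zeta_n^i)(1-\zeta_m^j)$). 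In Lemma \ref{Fermat-lem2} this cyclotomic factor disappears only through the special cancellation $(\zeta_{2m}^{-j}-\zeta_{2m}^{j})\,B(j/m,1-j/m)=-2\pi\sqrt{-1}$, which is independent of $(i,j)$. No such cancellation is available for the target formula: $B(a_i,b_j)$ bears no relation to $(1-\zeta_m^j)$. So if your path integral really is the Euler integral $B(a_i,b_j)F(a_i,b_j,a_i+b_j;t)$, then the closed-up rational cycle has periods $(1-\zeta_n^i)(1-\zeta_m^j)\cdot\frac{1}{nm}B(a_i,b_j)F(\cdots)$ --- exactly the phenomenon visible in Lemma \ref{Gauss-lem3} and Theorem \ref{m-gauss-thm1}, where the factors $(1-\zeta_N^n)$ do appear. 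The factor-free formula of Lemma \ref{Fermat-lem3} cannot be realized by a cycle of your proposed shape, and dividing by the cyclotomic factor is not an operation in rational homology.

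The paper's proof is precisely the ``abstract alternative'' you dismiss, and it does not suffer the rationality problem you fear, because the $\Ev$-ambiguity is not fixed by subtracting an irrational multiple of $\delta$ to kill the residual $F(a,b,1;1-t)$-term; it is fixed by a \emph{rational} monodromy condition at the other singular fiber $t=0$. Total degeneracy at $t=1$ gives $T_1-1:H_1(X_t,\Q)/\Ev\overset{\cong}{\to}\Ev$, so a rational $\gamma$ with $(T_1-1)\gamma=\delta$ exists uniquely up to $\Ev$; since $T_0-1$ is bijective on $\Ev$, one can choose the representative with $(T_0-1)\gamma=0$. The period identity then comes for free: both $\int_\gamma\omega_{i,j}$ and $\frac{\ve_1^i\ve_2^j}{nm}B(a_i,b_j)F(a_i,b_j,a_i+b_j;t)$ solve the same hypergeometric equation, have the same image under $T_1-1$ (Lemma \ref{Fermat-lem2} plus the connection formula you quote), and are both killed by $T_0-1$ (the hypergeometric series is single-valued at $t=0$); hence their difference is invariant under the full monodromy group and must vanish, there being no nonzero invariant solutions. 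In short: keep your reduction and uniqueness steps, but replace the contour construction by the $t=0$ normalization --- that is the missing idea.
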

\begin{proof}
The uniqueness follows from the fact that 
the monodromy invariant part of $H_1(X_t)$ is trivial.
We show the existence.
Write $\Ev:=\langle\delta(\ve_1,\ve_2)\mid(\ve_1,\ve_2)\in \mu_n\times\mu_m
\rangle\subset H_1(X_t,\Q)$.
Then it follows from the last condition of HG fibration in Definition \ref{HG-defn}
that one has
\[
N_1:=T_1-1:H_1(X_t,\Q)/\Ev\os{\cong}{\lra}\Ev.
\]
Therefore there is a unique homology cycle $\gamma(\ve_1,\ve_2)\in H_1(X_t,\Q)$
such that $(T_1-1)\gamma(\ve_1,\ve_2)=\delta(\ve_1,\ve_2)$ up to $\Ev$.
Let $T_0$ be the local monodromy at $t=0$.
Since $T_0-1:\Ev\to \Ev$ is bijective, we can choose 
$\gamma(\ve_1,\ve_2)$ such that $(T_0-1)\gamma(\ve_1,\ve_2)=0$
by replacing $\gamma(\ve_1,\ve_2)$ with $\gamma(\ve_1,\ve_2)+\delta_0$.
Then we show that this gives the desired cycle.
The monodromy of Gauss hypergeometric functions is well-known, in particular,
\[
(T_1-1)B(a,b)F(a,b,a+b;t)=-2\pi\sqrt{-1}F(a,b,1;1-t).
\]
Therefore letting 
\[
f_1:=-2\pi \sqrt{-1}F\left(1-\frac{i}{n},1-\frac{j}{m},1;1-t\right)
\]
\[
f_2:=B\left(1-\frac{i}{n},1-\frac{j}{m}\right)
F\left(1-\frac{i}{n},1-\frac{j}{m},2-\frac{i}{n}-\frac{j}{m};t\right)
\]
we have
\[
(T_1-1)\int_{\gamma(\ve_1,\ve_2)}\omega_{i,j}=
\int_{\delta(\ve_1,\ve_2)}\omega_{i,j}
=\frac{\ve^i_1\ve^j_2}{nm}f_1=(T_1-1)\frac{\ve^i_1\ve^j_2}{nm}f_2.
\]
On the other hand since $(T_0-1)\gamma(\ve_1,\ve_2)=0$,
we have 
\[
(T_0-1)\int_{\gamma(\ve_1,\ve_2)}\omega_{i,j}=0=(T_0-1)\frac{\ve^i_1\ve^j_2}{nm}f_2.
\]
Thus
\[
F:=\int_{\gamma(\ve_1,\ve_2)}\omega_{i,j}-\frac{\ve^i_1\ve^j_2}{nm}f_2
\]
is invariant under the both local monodromy, and this means $F=0$.
\end{proof}

\subsection{HG fibration of Gauss type}\label{Gauss-sect}
Suppose that the characteristic of $k$ is $0$. 
Let $f:X\to \P^1$ be the fibration
whose general fiber is the smooth completion of 
an affine curve 
\[y^N=x^a(1-x)^b(1-tx)^{N-b},\quad 0<a,b<N,\, \gcd(N,a,b)=1.\] 
$f$ is smooth over $\P^1\setminus\{0,1,\infty\}$.
Suppose that $k^\times$ contains all $N$-th roots of unity, and denote by
$\mu_N\subset k^\times$ the group of all $N$-th roots.
The action $(x,y,t)\mapsto (x,\zeta_Ny,t)$ for $\zeta_N\in \mu_N$ gives
a multiplication by the group ring $R=\Q[\mu_N]$.
Then $f$ is a HG fibration with multiplication by $(R,e)$
if and only if a projection $e:\Q[\mu_N]\to E$ satisfies $ad/N\not\in\Z$ and $bd/N\not\in\Z$ where 
$d:=\sharp\ker[e:\mu_N\to E^\times]$ (\cite{a-o-2} \S 3.2).
\begin{lem}\label{Gauss-lem1}
Let $X_t=f^{-1}(t)$ denotes the general fiber.
Put a $1$-form
\[
\omega_n:=\frac{x^{p_n}(1-x)^{q_n}(1-tx)^{n-1-q_n}}{y^n}dx,
\quad p_n:=\lfloor\frac{an}{N}\rfloor,\quad q_n:=\lfloor\frac{bn}{N}\rfloor
\]
for $n\in \{1,2,\ldots,N-1\}$.
Put $d:=\sharp\ker[e:\mu_N\to E^\times]$ and
\[
I_e:=\{n\in\Z\mid 1\leq n\leq N-1,\, d|n,\, \gcd(n/d,N/d)=1\}.
\]
Then $\{\omega_n\mid n\in I_e\}$ forms a basis of 
the $e$-part $\vg(X_t,\Omega^1_{X_t})(e)$.
\end{lem}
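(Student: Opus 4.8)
The plan is to treat $X_t$ as the smooth projective model of the cyclic $\mu_N$-cover $y^N=f(x)$ with $f(x)=x^a(1-x)^b(1-tx)^{N-b}$, ramified only over $x=0,1,1/t,\infty$, and to decide the holomorphicity of each $\omega_n$ by a pointwise computation over these four branch points. First I would record the eigenspace structure. Since $\sigma_\zeta^*\omega_n=\zeta^{-n}\omega_n$ for $\sigma_\zeta\colon(x,y)\mapsto(x,\zeta y)$, the form $\omega_n$ lies in the eigenspace $V_{-n}\subset\vg(X_t,\Omega^1_{X_t})$ on which $\mu_N$ acts by $\zeta\mapsto\zeta^{-n}$. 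The $\zeta^{-n}$-eigenspace of the function field $k(X_t)=k(x)[y]/(y^N-f)$ is $k(x)\cdot y^{-n}$ and $dx$ is $\mu_N$-invariant, so every element of $V_{-n}$ has the shape $g(x)y^{-n}\,dx$ with $g\in k(x)$. Moreover, since $I_e=\{\,n:\gcd(n,N)=d\,\}$ is stable under $n\mapsto N-n$, the definition of the $e$-part gives $\vg(X_t,\Omega^1_{X_t})(e)=\bigoplus_{n\in I_e}V_{-n}$; thus it suffices to prove that $\omega_n$ is a $k$-basis of $V_{-n}$ for each $n\in I_e$.

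The heart of the argument is a local valuation computation. Writing $d_0=\gcd(N,a)$, $d_1=\gcd(N,b)$, the ramification indices over $x=0,\infty$ are $N/d_0$ and over $x=1,1/t$ are $N/d_1$ (the fibre over $\infty$ being governed by the effective exponent $-(a+N)\equiv-a \bmod N$). Choosing a local parameter at each point over the branch locus and using $p_n=\lfloor na/N\rfloor$, $q_n=\lfloor nb/N\rfloor$, a direct calculation gives, at each point over the indicated value of $x$,
\[
\ord\,\omega_n=\frac{N-[na]_N}{d_0}-1\ (x=0),\qquad \frac{[na]_N}{d_0}-1\ (x=\infty),
\]
\[
\ord\,\omega_n=\frac{N-[nb]_N}{d_1}-1\ (x=1),\qquad \frac{[nb]_N}{d_1}-1\ (x=1/t),
\]
with $[\,\cdot\,]_N$ the residue in $[0,N)$ as in Lemma \ref{Fermat-lem1}; all points over a fixed branch value give the same order by the $\mu_N$-symmetry, and away from the branch locus $\omega_n$ is visibly regular. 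Because $[na]_N$ is a nonnegative multiple of $d_0$ and $[nb]_N$ of $d_1$, the orders at $x=0,1$ are always $\ge0$, while those at $x=\infty,1/t$ are $\ge0$ precisely when $[na]_N\ne0$ and $[nb]_N\ne0$, i.e. when $na/N\notin\Z$ and $nb/N\notin\Z$.

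Next I would translate the hypotheses and count dimensions. For $n\in I_e$ write $n=dm$ with $\gcd(m,N/d)=1$; then $na/N\in\Z\Leftrightarrow(N/d)\mid a\Leftrightarrow ad/N\in\Z$, and similarly for $b$, so the defining inequalities $ad/N\notin\Z$, $bd/N\notin\Z$ of a Gauss-type HG fibration force $[na]_N\ne0$ and $[nb]_N\ne0$, whence $\omega_n$ is holomorphic for every $n\in I_e$. To see $\dim_kV_{-n}=1$, I would run the same four computations for a general $g(x)y^{-n}\,dx$: regularity forces $\ord_0 g\ge p_n$, $\ord_1 g\ge q_n$, $\ord_{1/t}g\ge n-1-q_n$ and $\deg g\le p_n+n-1$; since the three prescribed zero orders already sum to $p_n+n-1$, the ratio $g/\bigl(x^{p_n}(1-x)^{q_n}(1-tx)^{n-1-q_n}\bigr)$ is a polynomial of degree $\le0$, hence a constant. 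Thus $V_{-n}=k\cdot\omega_n$, and summing over $n\in I_e$ yields the asserted basis.

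The step I expect to be the main obstacle is the bookkeeping in the local computation: correctly pinning down the ramification index and the relevant exponent at each of the four points — especially the fibre over $\infty$, where the effective exponent is $-a \bmod N$ — and tracking $\ord(dx)$, so that everything collapses to the two residues $[na]_N$ and $[nb]_N$. Once these four order formulas are in hand, holomorphicity, the reformulation of the HG hypothesis, and the one-dimensionality of $V_{-n}$ all follow immediately.
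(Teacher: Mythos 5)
Your proposal is correct: the eigenspace decomposition $\vg(X_t,\Omega^1_{X_t})(e)=\bigoplus_{n\in I_e}V_{-n}$, the four local order formulas (which check out, e.g. $\ord\,\omega_n=\frac{N-[na]_N}{d_0}-1$ over $x=0$ and $\frac{[na]_N}{d_0}-1$ over $x=\infty$, with $d_0\mid[na]_N$), the translation of the HG hypothesis $ad/N,bd/N\notin\Z$ into $[na]_N\neq0\neq[nb]_N$ for $n\in I_e$, and the degree count forcing $V_{-n}=k\cdot\omega_n$ are all sound. The paper offers no proof of its own for this lemma --- it simply cites \cite{archinard}, (13), p.~917 --- and your argument is essentially the standard valuation-theoretic computation underlying that reference, so it matches the intended approach.
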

\begin{proof}
\cite{archinard} (13), p.917.
\end{proof}

\begin{lem}\label{Gauss-lem3}
Suppose $k=\C$. 
Write $a_n:=\{an/N\}$ and $b_n:=\{bn/N\}$.
There are points $P_0,P_1\in X_t$ such that $x=0,1$
 and a homology cycle
\[
u_0\in H_1^B(X_t,\{P_0,P_1\};\Z)
\]
such that
\[
\int_{u_0}\omega_n=B(a_n,b_n)F(a_n,b_n,a_n+b_n;t)\quad \mbox{for }|t|\ll1.
\]
Moreover letting $T_1$ be the local monodromy at $t=1$ and 
$u_1:=(1-T_1)u_0$, we have
\[
\int_{u_1}\omega_n=2\pi\sqrt{-1}F(a_n,b_n,1;1-t).
\]
The $e$-part $H_1^B(X_t,\Q)(e)$ is spanned by
\[
\gamma_0:=(1-\sigma)u_0,\quad
\gamma_1:=(1-\sigma)u_1
\]
as $E$-module where $\sigma$ is an automorphism of $X_t$
given by $(x,y)\mapsto(x,e^{\frac{2\pi\sqrt{-1}}{N}} y)$.
\end{lem}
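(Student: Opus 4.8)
The plan is to follow the template of Lemmas~\ref{Fermat-lem2} and~\ref{Fermat-lem3}, with the two hypergeometric solutions playing interchanged roles.

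First I would construct $u_0$ explicitly and evaluate its period. Take $u_0$ to be the lift to $X_t$ of a real arc joining the two branch points lying under $P_0$ and $P_1$, equipped with a fixed single-valued determination of $y=\big(x^a(1-x)^b(1-tx)^{N-b}\big)^{1/N}$ along it. Substituting $y^n=x^{an/N}(1-x)^{bn/N}(1-tx)^{(N-b)n/N}$ into $\omega_n$ and using $p_n-an/N=-a_n$, $q_n-bn/N=-b_n$, $(n-1-q_n)-(N-b)n/N=b_n-1$, the integrand collapses to $x^{-a_n}(1-x)^{-b_n}(1-tx)^{b_n-1}\,dx$. After normalizing the arc to $[0,1]$ by a M\"obius substitution preserving $\{0,1,1/t,\infty\}$, this reduces to the Euler integral $\int_0^1 s^{b_n-1}(1-s)^{a_n-1}(1-ts)^{-a_n}\,ds$, which for $|t|\ll1$ equals $B(a_n,b_n)F(a_n,b_n,a_n+b_n;t)$; convergence is ensured by $0<a_n,b_n<1$, valid for $n\in I_e$ (Lemma~\ref{Gauss-lem1}). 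I expect the delicate point to be pinning down the arc and the branch of $y$ so that the period is the solution holomorphic at $t=0$, rather than its companion $t^{1-a_n-b_n}F(1-a_n,1-b_n,2-a_n-b_n;t)$: the naive real segment from $x=0$ to $x=1$ yields the latter type, so the contour must be chosen with care.

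For the second identity, I would first note that $\sigma\colon(x,y)\mapsto(x,\zeta_Ny)$ and the local monodromy $T_1$ both fix the points $P_0=(0,0)$ and $P_1=(1,0)$ (where $y$ vanishes), so $u_1=(1-T_1)u_0$ has vanishing boundary and is a genuine cycle in $H_1^B(X_t,\Z)$. Then $\int_{u_1}\omega_n=(1-T_1)\int_{u_0}\omega_n$, and applying the logarithmic degeneration of ${}_2F_1$ at $t=1$ already used in the proof of Lemma~\ref{Fermat-lem3}, namely $(T_1-1)B(a,b)F(a,b,a+b;t)=-2\pi\sqrt{-1}\,F(a,b,1;1-t)$ with $a=a_n,\ b=b_n$, gives $\int_{u_1}\omega_n=2\pi\sqrt{-1}\,F(a_n,b_n,1;1-t)$.

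Finally, for the spanning statement I would use that $\sigma$ fixes $P_0,P_1$, so $\gamma_0=(1-\sigma)u_0$ and $\gamma_1=(1-\sigma)u_1$ again have trivial boundary and lie in $H_1^B(X_t,\Q)$; since $\sigma^*\omega_n=\zeta_N^{-n}\omega_n$, one computes $\int_{\gamma_i}\omega_n=(1-\zeta_N^{-n})\int_{u_i}\omega_n$, nonzero multiples of the two periods above because $N\nmid n$ for $n\in I_e$, and this also exhibits $\gamma_0,\gamma_1$ inside the $e$-part. The functions $F(a_n,b_n,a_n+b_n;t)$ and $F(a_n,b_n,1;1-t)$ are $\C$-linearly independent solutions of the hypergeometric equation, so an $E$-linear relation $\alpha\gamma_0+\beta\gamma_1=0$, paired against $\omega_n$ and read through the embedding $E\hookrightarrow\C$ under which $\sigma$ acts on $\omega_n$ by $\zeta_N^{-n}$, forces $\alpha=\beta=0$. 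As $\dim_EH_1^B(X_t,\Q)(e)=2$ by the defining condition of a HG fibration in \S\ref{HG-defn}, the independent pair $\gamma_0,\gamma_1$ is an $E$-basis, completing the proof.
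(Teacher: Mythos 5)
Your overall route is the same as the paper's: lift the segment $[0,1]$ with a fixed branch of $y$, evaluate $\int_{u_0}\omega_n$ as an Euler integral, deduce the $u_1$-period from the connection formula $(T_1-1)B(a,b)F(a,b,a+b;t)=-2\pi\sqrt{-1}\,F(a,b,1;1-t)$, and obtain spanning from $\dim_EH_1^B(X_t,\Q)(e)=2$ together with the $\C$-linear independence of the two periods. Your treatment of $u_1$, $\gamma_0$, $\gamma_1$ (boundaries vanish because $T_1$ and $\sigma$ fix $P_0,P_1$; the factor $1-\zeta_N^{-n}\neq0$ for $n\in I_e$) is correct, matches the paper, and is in fact more careful about these points than the paper is.

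The gap is in the evaluation of $\int_{u_0}\omega_n$. Your restriction computation is right: with $\omega_n$ as defined in Lemma \ref{Gauss-lem1}, the integrand is $x^{-a_n}(1-x)^{-b_n}(1-tx)^{b_n-1}dx$. But the Möbius step that follows is false. By Euler's formula, $\int_0^1x^{-a_n}(1-x)^{-b_n}(1-tx)^{b_n-1}dx=B(1-a_n,1-b_n)F(1-a_n,1-b_n,2-a_n-b_n;t)$, whereas $\int_0^1s^{b_n-1}(1-s)^{a_n-1}(1-ts)^{-a_n}ds=B(a_n,b_n)F(a_n,b_n,a_n+b_n;t)$; these disagree already at $t=0$, since $B(1-a_n,1-b_n)\neq B(a_n,b_n)$ in general, so no substitution can identify them. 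Concretely, among the Möbius maps preserving $\{0,1,1/t,\infty\}$ the only nontrivial one carrying the contour $[0,1]$ to itself is $x=(1-s)/(1-ts)$, which leaves the value unchanged, while the one that produces your target exponent pattern, $x=(1-ts)/(t(1-s))$, moves the contour to a path from $1/t$ to $\infty$ and creates a factor $t^{a_n+b_n-1}$. Nor can any other choice of contour or branch repair this: since $\gamma_0=(1-\sigma)u_0$ is closed and $\int_{\gamma_0}\omega_n=(1-\zeta_N^{-n})\int_{u_0}\omega_n$, the function $\int_{u_0}\omega_n$ must solve the one second-order Picard--Fuchs equation attached to $\omega_n$, which by your computation is the hypergeometric equation with parameters $(1-a_n,1-b_n,2-a_n-b_n)$; the function $F(a_n,b_n,a_n+b_n;t)$ is not a solution of that equation unless $a_n+b_n=1$. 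What your computation actually proves is $\int_{u_0}\omega_n=B(a_{N-n},b_{N-n})F(a_{N-n},b_{N-n},a_{N-n}+b_{N-n};t)$: the stated formula holds for $\omega_{N-n}$, equivalently with $a_n,b_n$ read as the fractional parts of $-an/N$ and $-bn/N$. This exposes an $n\leftrightarrow N-n$ indexing mismatch inside the paper itself: its proof simply asserts $\int_{u_0}\omega_n=\int_0^1x^{a_n-1}(1-x)^{b_n-1}(1-tx)^{-b_n}dx$, and that integrand is the restriction of $\omega_{N-n}$, not of the $\omega_n$ of Lemma \ref{Gauss-lem1}. The correct move at this point is to flag and perform that relabelling (harmless, as $n\mapsto N-n$ permutes $I_e$), not to assert a substitution identity that is false. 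Your own diagnosis that ``the contour must be chosen with care'' points the wrong way: the contour and branch are fine, and the naive segment does not produce $t^{1-a_n-b_n}F(1-a_n,1-b_n,2-a_n-b_n;t)$ as you feared --- it produces the holomorphic solution attached to the dual index, which is the whole problem.
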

\begin{proof}
Define a path $u_0$ as
\[
(x,y)=(s,s^\frac{a}{N}(1-s)^\frac{b}{N}(1-ts)^{1-\frac{b}{N}}),\quad s\in [0,1]
\]
in which $s^\frac{a}{N}$, $(1-s)^\frac{b}{N}$ take values in $\R_{\geq0}$
and $(1-ts)^{1-\frac{b}{N}}$ takes values such that 
$|(1-ts)^{1-\frac{b}{N}}-1|\ll1$.
Then
\begin{align*}
\int_{u_0}\omega_n&=\int_0^1x^{a_n-1}(1-x)^{b_n-1}(1-tx)^{-b_n}dx\\
&=B(a_n,b_n)F(a_n,b_n,a_n+b_n;t).
\end{align*}
The assertion for $u_1$ follows from the fact 
\[
(T_1-1)B(a,b)F(a,b,a+b;t)=-2\pi\sqrt{-1}F(a,b,1;1-t).
\]
The last assertion follows from the fact that $\dim_EH^B_1(X_t,\Q)(e)=2$
and that $\gamma_0$ and $\gamma_1$ are $E$-linearly independent
because their images by the map 
\[
H^B_1(X_t,\Q)(e)\lra \O\omega_n^\vee=\Hom(\O\omega_n,\O),\quad \gamma\longmapsto \int_\gamma\omega_n
\]
are $\C$-linearly independent.
\end{proof}
\begin{lem}\label{Gauss-lem2}
Let the notation be as in Lemma \ref{Gauss-lem1}. Then
\[
\vg(X,\Omega^2_X(\log Y))(e)=\bigoplus_{n\in I_e}k\cdot
\frac{dt}{t-1}\omega_n.
\]
\end{lem}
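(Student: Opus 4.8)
The plan is to push the computation down to the base $\P^1$ and reduce it to a statement about line bundles there; extending scalars we may assume $k=\C$, the asserted identity being stable under flat extension of the ground field. Write $S=\{0,1,\infty\}$ and let $Y=f^{-1}(S)$ with its reduced structure, a normal crossing divisor on the surface $X$. From the relative logarithmic cotangent sequence
\[
0\lra f^*\Omega^1_{\P^1}(\log S)\lra \Omega^1_X(\log Y)\lra \Omega^1_{X/\P^1}(\log)\lra 0
\]
and $\dim X=2$ one gets $\Omega^2_X(\log Y)\cong f^*\Omega^1_{\P^1}(\log S)\ot\Omega^1_{X/\P^1}(\log)$, so by the projection formula
\[
\vg(X,\Omega^2_X(\log Y))(e)\cong H^0\!\big(\P^1,\Omega^1_{\P^1}(\log S)\ot\cH(e)\big),\quad \cH:=f_*\Omega^1_{X/\P^1}(\log),
\]
where $\cH(e)$ is the $e$-part of the canonical extension of the Hodge bundle $f_*\Omega^1_{X_t}$. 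Since $E\hra k$, this $e$-part splits as $\cH(e)\cong\bigoplus_{n\in I_e}\cL_n$ into line bundles indexed by $I_e$, with $\cL_n$ the extension of the line spanned by $\omega_n$. It thus suffices to show, for each $n\in I_e$, that $H^0(\P^1,\Omega^1_{\P^1}(\log S)\ot\cL_n)$ is one dimensional and spanned by the section corresponding to $\tfrac{dt}{t-1}\omega_n$.

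Since $\deg\Omega^1_{\P^1}(\log S)=1$, this is equivalent to proving $\deg\cL_n=-1$ together with $\Omega^1_{\P^1}(\log S)\ot\cL_n\cong\cO_{\P^1}$, the trivialising section being $\tfrac{dt}{t-1}\omega_n$. I would read the local structure of $\cL_n$ off the hypergeometric exponents supplied by Lemma \ref{Gauss-lem3}: the periods of $\omega_n$ satisfy the equation with exponents $\{0,1-a_n-b_n\}$ at $t=0$, $\{0,0\}$ at $t=1$, and $\{a_n,b_n\}$ at $t=\infty$, where $0<a_n,b_n<1$. At $t=1$ the monodromy is unipotent (totally degenerate reduction), so $\omega_n$ extends to a generator of $\cL_n$ there and pairs with the simple pole of $\tfrac{dt}{t-1}$ to give a local generator of $\Omega^1_{\P^1}(\log S)\ot\cL_n$. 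At $t=0$ and $t=\infty$ the monodromy is only quasi-unipotent, and Deligne's canonical extension (with residues in $[0,1)$) produces a fractional twist whose total degree contribution is $-1$; the compensating zero of $\tfrac{dt}{t-1}$ as a section of $\Omega^1_{\P^1}(\log S)$ at $t=0$ is exactly what makes $\tfrac{dt}{t-1}\omega_n$ a nowhere vanishing global section. This simultaneously pins down $\deg\cL_n=-1$ and identifies the generator.

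Concretely, the decisive point — and the step I expect to be the main obstacle — is the local verification at $t=\infty$ (and $t=0$) that $\tfrac{dt}{t-1}\omega_n$ really has at worst logarithmic poles along $Y$. Passing to $s=1/t$ and clearing denominators, the naive model $s^{N-b}y^N=x^a(1-x)^b(s-x)^{N-b}$ is singular along $s=0$, and the substitution $\tilde y=s^{(N-b)/N}y$ gives $\omega_n\sim s^{1-b_n}\cdot(\text{holomorphic form})$ with $1-b_n\notin\Z$. Hence one must resolve $X$ over $t=\infty$ and account for the orders of $\omega_n$ and of $\tfrac{dt}{t-1}$ along every component of $Y$ there — including any non-reduced vertical components and the horizontal branch divisor of $f$ — to confirm that the fractional exponent is absorbed and only honest logarithmic poles survive; the analogous but milder bookkeeping is needed at $t=0$. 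Granting this, $\Omega^1_{\P^1}(\log S)\ot\cL_n\cong\cO_{\P^1}$ for every $n\in I_e$, so each summand contributes exactly $k\cdot\tfrac{dt}{t-1}\omega_n$, and summing over $I_e$ yields the asserted basis.
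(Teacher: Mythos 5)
Your proposal contains two genuine errors, and they are linked. First, you have misidentified $Y$: in this lemma $Y$ is only the reduced fiber $f^{-1}(1)$ over $t=1$, not $f^{-1}(\{0,1,\infty\})$. This is how the paper uses it: its proof introduces $Y_0=f^{-1}(0)$ and $Y_\infty=f^{-1}(\infty)$ as separate divisors and treats $\Omega^2_X(\log Y+Y_0+Y_\infty)$ as a strictly larger sheaf than $\Omega^2_X(\log Y)$ (see also \S\ref{K2-sect}, where $Y=f^{-1}(1)_{\mathrm{red}}$). So $\vg(X,\Omega^2_X(\log Y))$ consists of $2$-forms with log poles along the fiber over $t=1$ only, which must be holomorphic --- in particular residue-free --- along $f^{-1}(0)$ and $f^{-1}(\infty)$. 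Your reduction via $\Omega^2_X(\log f^{-1}\{0,1,\infty\})\cong f^*\Omega^1_{\P^1}(\log(0+1+\infty))\ot\Omega^1_{X/\P^1}(\log)$ therefore computes the larger group $\vg(X,\Omega^2_X(\log Y+Y_0+Y_\infty))(e)$, and for that group the statement you then set out to prove is false in general.

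Second, and independently, your key numerical claim ($\deg\cL_n=-1$, hence $h^0(\P^1,\Omega^1_{\P^1}(\log(0+1+\infty))\ot\cL_n)=1$) fails for roughly half the indices $n\in I_e$. The local exponents of $\omega_n$ at $t=0$ are $0$ and $1-a_n-b_n$. When $a_n+b_n<1$ both lie in $[0,1)$, so $\omega_n$ extends to a nowhere-vanishing section of the canonically extended Hodge line near $t=0$: there is no fractional twist there at all, one gets $\deg\cL_n=0$, and the section space is $2$-dimensional, spanned by $\frac{dt}{t}\omega_n$ and $\frac{dt}{t-1}\omega_n$. This case cannot be avoided: $n\in I_e$ implies $N-n\in I_e$, and $a_{N-n}+b_{N-n}=2-(a_n+b_n)$, so some $n\in I_e$ always satisfies $a_n+b_n\leq1$. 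The ingredient your argument is missing is exactly the step the paper performs after its computation on $\P^1$: it first computes $\vg(\P^1,\Omega^1_{\P^1}(\log(0+1+\infty))\ot \cH_e(\chi))$ --- which is $\langle\frac{dt}{t}\omega_n,\frac{dt}{t-1}\omega_n\rangle$ when $a_n+b_n\leq1$ and $\langle\frac{dt}{t-1}\omega_n\rangle$ when $a_n+b_n>1$ --- and then cuts this down to $\vg(X,\Omega^2_X(\log Y))(\chi)$ via the residue exact sequence: the Poincar\'e residue along $Y_0$ is required to vanish (this is what kills $\frac{dt}{t}\omega_n$ in the first case), while the residue along $Y_\infty$ vanishes automatically because the local monodromy at $t=\infty$ has no eigenvalue $1$ on the $e$-part. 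Without this residue step you cannot obtain the uniform one-dimensional answer, and with your reading of $Y$ there is no divisor left along which to impose it.
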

\begin{proof}
Let $\chi:R\to k$ be a homomorphism of $\Q$-algebra factoring through $e$.
Write
\[
\vg(X,\Omega^2_X(\log Y))(\chi):=k\ot_{\chi,k\ot_\Q R}\vg(X,\Omega^2_X(\log Y)).
\]
Then the assertion is equivalent to that for any $\chi$
\begin{equation}\label{Gauss-lem2-eq1}
\vg(X,\Omega^2_X(\log Y))(\chi)=k\cdot
\frac{dt}{t-1}\omega_n,
\end{equation}
where $n\in \{1,\ldots,N-1\}$ such that $\chi(\zeta)=\zeta^{-n}$ 
for $\forall \zeta\in\mu_N$.

We may suppose $k=\C$.
Put $Y_0=f^{-1}(0)$, $Y_\infty=f^{-1}(\infty)$, $S=\P^1\setminus\{0,1,\infty\}$ and 
$U:=X\setminus(Y\cup Y_0\cup Y_\infty)=f^{-1}(S)$.
Let $\cH=H^1_\dR(U/S)$ be a connection.
Then 
\begin{align*}
\vg(X,\Omega^2_X(\log Y+Y_0+Y_\infty))&=
F^2H^2_\dR(U)\\
&=F^2H^1_\dR(S,\cH)\\
&=\vg(\P^1,\Omega^1_{\P^1}(\log(0+1+\infty))\ot \cH_e)
\end{align*}
where $\cH_e\subset j_*\cH$, $j:S\hra\P^1$ is Deligne's canonical extension.
Hence
\[
\vg(X,\Omega^2_X(\log Y+Y_0+Y_\infty))(\chi)=
\vg(\P^1,\Omega^1_{\P^1}(\log(0+1+\infty))\ot \cH_e(\chi)).
\]
Note that $X$ is a nonsingular rational surface (Lemma \ref{boundary-lem1} below).
The localization sequence induces an isomorphism
\[
\Res:F^2H^2_\dR(U)\os{\cong}{\lra} F^1H^\dR_1(Y)\op F^1H^\dR_1(Y_0)\op F^1H^\dR_1(Y_\infty)
\]
by the Poincare residue map.
Since $a_n,b_n\not\in \Z$, the local monodromy at $t=\infty$ on $H^1(X_t,\Q)$
has no eigenvalue $1$ by Lemma \ref{Gauss-lem3}. This implies
the composition $H^2_\dR(U)(e)\to H^\dR_1(Y_\infty)$ is zero. Hence
$H^\dR_1(Y_\infty)(e)=0$ and
\[
\vg(X,\Omega^2_X(\log Y+Y_0+Y_\infty))(\chi)=
\vg(X,\Omega^2_X(\log Y+Y_0))(\chi).
\]
Summing up the above we have
\[
0\to \vg(X,\Omega^2_X(\log Y))(\chi)\to
\vg(\P^1,\Omega^1_{\P^1}(\log(0+1+\infty))\ot \cH_e(\chi))
\os{\Res}{\to}H^\dR_1(Y_0) \to0.
\]
By a computation of the periods in Lemma \ref{Gauss-lem3}, one can get a
explicit description of $\cH_e$ and then 
\[
\vg(\P^1,\Omega^1_{\P^1}(\log(0+1+\infty))\ot \cH_e(\chi))=
\begin{cases}
\langle\frac{dt}{t}\omega_n,\,\frac{dt}{t-1}\omega_n\rangle_\C &a_n+b_n\leq 1\\
\langle\frac{dt}{t-1}\omega_n\rangle_\C &a_n+b_n> 1.
\end{cases}
\]
(the details are left to the reader because it is a tedious computation, but see the proof of \cite{a-o-log} Lemma 3.7). Now \eqref{Gauss-lem2-eq1} is immediate.
\end{proof}

\section{Regulators of $K_2$ of HG fibration of Fermat type}\label{m-fermat-sect}
In this section the base field is $\C$.

\subsection{}
Let $X$ be a smooth proper variety over $\C$.
Let
\begin{equation}\label{m-fermat-reg}
\reg:H^p_\cM(X,\Z(q))\lra H^p_\cD(X,\Z(q))
\end{equation}
be the {\it Beilinson regulator map}
from the motivic cohomology group
to the Deligne-Beilinson cohomology group (cf. \cite{schneider}).
If $p\leq q$ and $p\ne 2q$, then the right hand side is canonically isomorphic to
$\Hom(H^B_{p-1}(X,\Z),\C/\Z(q))$ modulo torsion.
For $\xi\in H^p_\cM(X,\Z(q))$ and $\gamma\in H^B_{p-1}(X,\Z)$, 
we write the pairing by
\[
\langle\reg(\xi)\,|\,\gamma\rangle\in\C/\Z(q).
\]
\begin{prop}\label{m-fermat-prop1}
Let $f:U\to S$ be a smooth proper morphism onto a smooth curve $S$ over $\C$.
Let $U_t=f^{-1}(t)$ denotes a fiber.
Suppose $p=q\geq 1$. 
Let $\xi\in H^p_\cM(U,\Z(p))$ and $\gamma_t\in H_{p-1}^B(U_t,\Z)$.
We think of
\[F=\langle\reg(\xi|_{U_t})\,|\,\gamma_t\rangle\]
being a multi-valued function of variable $t$ which is locally holomorphic on 
$t\in S$.
Let
\[
\mathrm{dlog}(\xi)=dt\wedge\omega\in \vg(U,\Omega^p_U).
\]
Then
\[
\frac{dF}{dt}=\pm
\int_{\gamma_t}\omega.
\]
If $p=q=2$, let $\xi=\sum\{f,g\}$ be a $K_2$-symbol, and
describe
\[
F=\sum\int_{\gamma_t}
\log f\frac{dg}{g}-\log g(O)\frac{df}{f}
\]
by Beilinson's formula where $O$ is the origin of a loop $\gamma_t$
(e.g.\,\cite{hain} Proposition 6.3).
Then
\[
\frac{dF}{dt}=\int_{\gamma_t}\omega,\quad 
\mbox{where }\sum\frac{df}{f}\frac{dg}{g}=dt\wedge \omega.
\]
\end{prop}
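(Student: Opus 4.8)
\emph{Strategy.} The plan is to reduce the statement to a single application of Stokes' theorem to the $2$-chain (``tube'') swept out by the flat family of cycles $\gamma_t$. The starting point is the explicit formula for the regulator pairing: for $p=q$ the value $F(t)=\langle\reg(\xi|_{U_t})\mid\gamma_t\rangle$ is represented by an integral $\int_{\gamma_t}\eta$ of a (multivalued) $(p-1)$-form $\eta$ that extends to the total space $U$ and whose principal part has single-valued differential equal to $\dlog(\xi)=dt\wedge\omega$. For $p=q=2$ the integrand is exactly Beilinson's $\sum\log f\,d\log g-\log g(O)\,d\log f$, and indeed
\[
d\Bigl(\sum\log f\,d\log g\Bigr)=\sum\frac{df}{f}\wedge\frac{dg}{g}=dt\wedge\omega\quad\text{on }U .
\]
Note that restricting to a fiber kills $dt$, so the fiberwise integrand is closed on $U_t$ and $\int_{\gamma_t}\eta$ is well defined modulo the period lattice, consistently with the target $\C/\Z(q)$.

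\emph{Main computation.} Granting this, I would compute $dF/dt$ by a first-variation argument. Fix a path $c$ from $t_0$ to $t$ in $S$ and let $\Sigma=\bigcup_{s\in c}\gamma_s$ be the tube; since $\gamma_t$ is a flat (horizontal) section of the homology local system, $\partial\Sigma=\gamma_t-\gamma_{t_0}$. Applying Stokes to the single-valued form $dt\wedge\omega$ gives
\[
F(t)-F(t_0)=\int_\Sigma dt\wedge\omega=\int_c\Bigl(\int_{\gamma_s}\omega\Bigr)\,ds ,
\]
the last equality because pulling $dt\wedge\omega$ back to $\Sigma$, parametrized by (loop parameter)$\times$(path parameter), leaves exactly the fiber integral $\int_{\gamma_s}\omega$ times $ds$. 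Differentiating in $t$ yields $dF/dt=\int_{\gamma_t}\omega$. Equivalently, lifting $\partial_t$ to a vector field $V$ on $U$ with $\iota_V\,dt=1$ whose flow transports $\gamma_{t_0}$ to $\gamma_t$, Cartan's formula $\mathcal L_V=d\iota_V+\iota_V d$ isolates the main term $\int_{\gamma_t}\iota_V(dt\wedge\omega)=\int_{\gamma_t}\omega$ (using $dt|_{U_t}=0$).

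\emph{Main obstacle.} The step I expect to require the most care is the bookkeeping of the multivaluedness of $\eta$, which is precisely where the normalization $-\log g(O)\,d\log f$ is needed. Because $\log f$ has nontrivial monodromy around $\gamma_t$, the form $\eta$ is not single-valued on the tube, so the naive Cartan computation produces a boundary contribution $\int_{\gamma_t}d(\iota_V\eta)$ which does not vanish but equals the jump of $\log f$ around the loop times $(V\log g)(O)$, i.e. $2\pi\sqrt{-1}\,w_t\,(V\log g)(O)$ with $w_t=\frac{1}{2\pi\sqrt{-1}}\int_{\gamma_t}d\log f\in\Z$ locally constant. One then checks that the $t$-derivative of Beilinson's correction $-\log g(O)\int_{\gamma_t}d\log f=-2\pi\sqrt{-1}\,w_t\log g(O_t)$ is exactly $-2\pi\sqrt{-1}\,w_t\,(V\log g)(O)$, since the basepoint $O_t$ is transported by the same flow; the two contributions cancel, leaving $dF/dt=\int_{\gamma_t}\omega$ with the clean $+$ sign for $K_2$. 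For general $p=q$ the analogous monodromy terms still cancel, but the orientation of the tube enters both $\partial\Sigma=\pm(\gamma_t-\gamma_{t_0})$ and the sign of $\iota_V(dt\wedge\omega)$, which is what accounts for the $\pm$ in the first formula.
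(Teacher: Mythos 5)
Your route is genuinely different from the paper's. The paper never touches an explicit integrand: it factors the regulator through the group of extensions of admissible variations of MHS over $S$, writes $F=\pm\langle e_\dR-e_B,\gamma_t\rangle$ for local liftings $e_\dR\in{\mathscr V}_\dR\cap F^0$, $e_B\in{\mathscr V}_B$ of $1\in\Q$, differentiates using the flatness of the Betti lift (so the $e_B$-term is locally constant in $\C/\Q(p)$), and concludes with the identity $\nabla(e_\dR)=\dlog(\xi)$, which holds because $\nabla(e_\dR)$ is the extension datum of the de Rham realization. You instead take an explicit integral representative and run a Stokes/first-variation argument on the tube swept out by the flat family $\gamma_s$. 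For $p=q=2$ your argument is correct and complete, since Beilinson's formula is supplied in the statement itself: the seam contribution $2\pi\sqrt{-1}\,w_t\,(V\log g)(O_t)$ coming from the monodromy of $\log f$ across the tube, and the $t$-derivative of the basepoint correction $-\log g(O_t)\int_{\gamma_t}d\log f=-2\pi\sqrt{-1}\,w_t\log g(O_t)$, do cancel exactly as you say, and this yields the unambiguous $+$ sign. In fact your bookkeeping makes rigorous precisely the point the paper compresses into the one-line remark that $d(\int_\gamma\eta)=(\int_\gamma\omega)\,dt$ when $d\eta=dt\wedge\omega$; what the paper's abstract approach buys in exchange is that the same objects $e_\dR,e_B,\wt\gamma_t$ are reused later (in the proof of Theorem \ref{m-fermat-thm2}) to control the constant of integration via the monodromy operator $Q$, which your framework does not directly provide.

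There is, however, a genuine gap in the general case $p=q\geq 1$. Your starting point — that $\langle\reg(\xi|_{U_t})\mid\gamma_t\rangle$ is computed by integrating over $\gamma_t$ a multivalued $(p-1)$-form $\eta$ defined on the whole of $U$, whose differential is $\dlog(\xi)$ up to corrections as controllable as a single basepoint term — is exactly the nontrivial input, and it is asserted rather than established. For $p=2$ this is Beilinson's formula (Hain, Proposition 6.3, quoted in the statement); for $p\geq 3$ explicit current-theoretic representatives of the Deligne class do exist (Goncharov, Kerr--Lewis--M\"uller-Stach), but they carry additional correction terms supported on positive-codimension subvarieties, not just a basepoint normalization, so both the existence of your $\eta$ and the claim that ``the analogous monodromy terms still cancel'' require real proof (one must at least move $\gamma_t$ and the tube off these supports and track the resulting chain corrections). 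The paper's proof needs no such formula precisely because the identity $\nabla(e_\dR)=\dlog(\xi)$, i.e.\ Griffiths transversality for the extension class, substitutes for it. As written, your proposal therefore proves the $K_2$ assertion of the proposition but not the first assertion for general $p$; to close the gap you would either have to develop the higher explicit formulas, or revert to the Hodge-theoretic argument for that half.
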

\begin{proof}
The regulator map \eqref{m-fermat-reg} sits into a commutative
diagram
\[
\xymatrix{
H^p_\cM(U,\Q(p))\ar[r]^{\reg_S\quad}\ar[d]&\Ext_S(\Q,R^{p-1}f_*\Q(p))\ar[d]\\
H^p_\cM(U_t,\Q(p))\ar[r]^{\reg\qquad}&\Ext(\Q,H^{p-1}(U_t,\Q(p)))
}
\]
where $\Ext_S$ (resp. $\Ext$) denotes the group of
1-extensions of admissible variations of MHS's
(resp. MHS's), and the vertical arrows are the restriction maps.
Let 
\[
0\lra R^{p-1}f_*\Q(p)\lra {\mathscr V}\lra \Q\lra 0
\]
be the corresponding 1-extension to $\reg_S(\xi)$.
Let $e_\dR\in {\mathscr V}_\dR\cap F^0$ and
$e_B\in {\mathscr V}_B$ be local liftings of $1\in\Q$.
Then $e_\dR-e_B\in R^{p-1}f_*\Q(p)$, and 
\[
F=\pm\langle e_\dR-e_B,\gamma_t\rangle
\]
where $\langle-,-\rangle:H^{p-1}(X_t,\C)\ot H_{p-1}^B(X_t,\C)\to\C$ is the
natural pairing. Fix a lifting $\wt{\gamma}_t\in {\mathscr V}^\vee_B\ot\Q(p)$
via the surjective map ${\mathscr V}^\vee_B\ot\Q(p)\to H_{p-1}^B(X_t,\Q)$.
Then one has
\[
\pm F=\langle e_\dR,\wt\gamma_t\rangle-
\overbrace{\langle e_B,\wt\gamma_t\rangle}^{\Q(p)}
\]
and hence
\[
\pm\frac{dF}{dt}=\frac{d}{dt}\langle e_\dR,\wt\gamma_t\rangle
=\langle \nabla(e_\dR),\gamma_t\rangle
\]
where the last pairing is the natural pairing
on $\Omega^1_S\ot H^{p-1}_\dR(U/S)$ and $H^B_{p-1}(X_t,\C)$.
Note that $\nabla(e_\dR)$ is the extension data of
\[
0\lra H^{p-1}_\dR(U/S)\lra {\mathscr V}_\dR\lra \O_S\lra 0.
\]
and this corresponds to 
the de Rham realization of $\xi$. Hence $\nabla(e_\dR)=\dlog(\xi)$, and
the former assertion follows. The latter assertion follows from this and the fact that
$d(\int_\gamma \eta)=(\int_\gamma\omega)dt$ for 1-forms $\eta$ and $\omega$ such that $d\eta=dt\wedge \omega$.
\end{proof}

\subsection{Main Theorems}\label{m-fermat-sect2}
Let $f$ be a HG fibration of Fermat type,
\[
X_t=f^{-1}(t):(x^n-1)(y^m-1)=1-t,\quad n,\,m\geq 2
\]
on which the group $\mu_n\times \mu_m$ acts where $\mu_n\subset \C^\times$
denotes the group of $n$-th roots of unity.
We then discuss the Beilinson regulator map
\[
\reg:H^2_\cM(X_t,\Q(2))=K_2(X_t)^{(2)}\lra H^2_\cD(X_t,\Q(2))=\Hom(H^B_1(X_t,\Z),\C/\Q(2)).
\]
For $(\nu_1,\nu_2)\in\mu_n\times \mu_m$ such that $\nu_1,\nu_2\ne1$,
we consider a $K_2$-symbol
\begin{equation}\label{m-fermat-eq1}
\xi=\left\{
\frac{x-1}{x-\nu_1},\frac{y-1}{y-\nu_2}
\right\}\in K_2(X\setminus f^{-1}(1)).
\end{equation}
One immediately has
\begin{equation}\label{m-fermat-eq2}
\mathrm{dlog}(\xi)=-
\sum_{i=1}^{n-1}\sum_{j=1}^{m-1}(1-\nu^{-i}_1)(1-\nu^{-j}_2)\frac{dt}{t-1}
\omega_{i,j}.
\end{equation}
The main theorems are formulas describing 
\[
\langle\reg(\xi)\,|\,\gamma\rangle=\langle\reg(\xi|_{X_t})\,|\,\gamma\rangle\in\C/\Q(2),
\quad \gamma\in H_1^B(X_t,\Q)
\]
via the generalized hypergeometric functions.
\begin{thm}\label{m-fermat-thm1}
Write $a_i:=1-i/n$ and $b_j:=1-j/m$. 
Let $\delta(\ve_1,\ve_2)$ be the homology cycle as in \S \ref{Fermat-sect}.
Then for $|t-1|<1$
\begin{align*}
\frac{1}{2\pi\sqrt{-1}}\langle\reg(\xi)\mid\delta(\ve_1,\ve_2)\rangle
=C_0+&C_1\log(1-t)
+\sum_{i=1}^{n-1}\sum_{j=1}^{m-1}
(1-\nu_1^{-i})(1-\nu_2^{-j})\frac{\ve_1^i\ve_2^j}{nm}\\
&\times
a_ib_j(1-t)\,{}_4F_3\left({a_i+1,b_j+1,1,1\atop 2,2,2};1-t\right)
\end{align*}
modulo $\Q(1)=2\pi\sqrt{-1}\Q$
where
\[
C_0=\begin{cases}
-\log(nm(1-\nu_1)(1-\nu_2))&(\ve_1,\ve_2)=(1,1),(\nu_1,\nu_2)\\
\log(nm(1-\nu_1)(1-\nu_2))&(\ve_1,\ve_2)=(1,\nu_2),(\nu_1,1)\\
\log\left(\frac{\ve_2-1}{\ve_2-\nu_2}\right)&\ve_1=1\mbox{ and }\ve_2\ne1,\nu\\
\log\left(\frac{\ve_1-1}{\ve_1-\nu_1}\right)&\ve_1\ne1,\nu_1\mbox{ and }\ve_2=1\\
-\log\left(\frac{\ve_2-1}{\ve_2-\nu_2}\right)&\ve_1=\nu_1\mbox{ and }\ve_2\ne1,\nu\\
-\log\left(\frac{\ve_1-1}{\ve_1-\nu_1}\right)&\ve_1\ne1,\nu_1\mbox{ and }\ve_2=\nu_2\\
0&\mbox{others}
\end{cases}
\]
\[
C_1=\begin{cases}
1&(\ve_1,\ve_2)=(1,1),(\nu_1,\nu_2)\\
-1&(\ve_1,\ve_2)=(1,\nu_2),(\nu_1,1)\\
0&\mbox{others}.
\end{cases}
\]
\end{thm}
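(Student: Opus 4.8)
The plan is to reduce the statement to a first-order differential equation in $t$, integrate it explicitly in terms of the ${}_4F_3$ function, and then pin down the constant of integration by a local analysis at the degenerate fibre $t=1$.

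First I would apply Proposition \ref{m-fermat-prop1} to the pairing $\langle\reg(\xi)\mid\delta(\ve_1,\ve_2)\rangle$. Reading off $\omega$ from $\dlog(\xi)=dt\wedge\omega$ together with \eqref{m-fermat-eq2} gives $\omega=-\sum_{i,j}(1-\nu_1^{-i})(1-\nu_2^{-j})(t-1)^{-1}\omega_{i,j}$, and Lemma \ref{Fermat-lem2} (whose factor $2\pi\sqrt{-1}$ cancels the normalisation) yields, for $F:=\frac{1}{2\pi\sqrt{-1}}\langle\reg(\xi)\mid\delta(\ve_1,\ve_2)\rangle$,
\[
\frac{dF}{dt}=\sum_{i=1}^{n-1}\sum_{j=1}^{m-1}(1-\nu_1^{-i})(1-\nu_2^{-j})\frac{\ve_1^i\ve_2^j}{nm}\cdot\frac{F(a_i,b_j,1;1-t)}{t-1}.
\]
Setting $s=1-t$ turns the right-hand side into $\sum(1-\nu_1^{-i})(1-\nu_2^{-j})\frac{\ve_1^i\ve_2^j}{nm}s^{-1}F(a_i,b_j,1;s)$, so the problem becomes one of finding an antiderivative of $s^{-1}F(a,b,1;s)$.

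Next I would integrate term by term using the series identity
\[
\frac{d}{ds}\left[\log s+ab\,s\cdot{}_4F_3\left({a+1,b+1,1,1\atop 2,2,2};s\right)\right]=\frac{F(a,b,1;s)}{s},
\]
which follows from $(a+1)_k=(a)_{k+1}/a$ by matching coefficients of $s^{k}$: the antiderivative of $s^{-1}F(a,b,1;s)$ is $\log s+\sum_{k\ge1}\frac{(a)_k(b)_k}{k\,(k!)^2}s^k$, and the sum equals exactly $ab\,s\,{}_4F_3$. This gives the claimed shape with the stated ${}_4F_3$-term and with $C_1=\frac{1}{nm}\sum_{i,j}(1-\nu_1^{-i})(1-\nu_2^{-j})\ve_1^i\ve_2^j$, the constant $C_0$ being left undetermined. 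The value of $C_1$ is then computed by factoring the double sum as a product over $i$ and over $j$: each factor $\sum_{i=1}^{n-1}(1-\nu_1^{-i})\ve_1^i$ equals $n$, $-n$, or $0$ according as $\ve_1=1$, $\ve_1=\nu_1$, or otherwise (using $\sum_{i=0}^{n-1}\zeta^i=0$ for $\zeta\ne1$), and similarly for $j$; this reproduces the table $C_1=1,-1,0$.

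The hard part is the constant $C_0$. Since the ${}_4F_3$-sum is $O(1-t)$, one has $C_0=\lim_{t\to1}[F-C_1\log(1-t)]$, so I would compute the $t\to1$ asymptotics of $F$ directly from Beilinson's explicit formula $F=\frac{1}{2\pi\sqrt{-1}}\sum\int_{\delta(\ve_1,\ve_2)}\log f\frac{dg}{g}-\log g(O)\frac{df}{f}$, using that $\delta(\ve_1,\ve_2)$ shrinks to the node $P(\ve_1,\ve_2)$. In local coordinates there one has $x^n-1\sim n\ve_1^{-1}(x-\ve_1)$, $y^m-1\sim m\ve_2^{-1}(y-\ve_2)$, so the fibre equation becomes $(x-\ve_1)(y-\ve_2)\sim \ve_1\ve_2(1-t)/(nm)$; this is the origin of the $nm$ in $C_0$ (the phase $\ve_1\ve_2$ is absorbed mod $\Q(1)$). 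When $\ve_1\in\{1,\nu_1\}$ and $\ve_2\in\{1,\nu_2\}$, both $f$ and $g$ have a zero at the node, and the leading factors $f\sim(x-1)/(1-\nu_1)$, $g\sim(y-1)/(1-\nu_2)$ produce the $\log((1-\nu_1)(1-\nu_2))$ terms alongside the $C_1\log(1-t)$ singularity. When exactly one of $\ve_1,\ve_2$ is special (so $C_1=0$), the finite contribution comes from the base-point term $-\log g(O)\int_{\delta}d\log f$: there $f$ has a simple zero, the vanishing cycle winds once around it, and $g(O)\to g(P)=\frac{\ve_2-1}{\ve_2-\nu_2}$, giving the $\pm\log\frac{\ve_2-1}{\ve_2-\nu_2}$ entries. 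The delicate points are the exact signs and a factor of $2$ coming from $\delta(1,1)=(1-\sigma(1,e^{2\pi\sqrt{-1}/m}))u$ and from the contribution of the explicit path $u$, which demand careful bookkeeping of orientations; I would also exploit the $x\leftrightarrow y$ symmetry interchanging $(n,\nu_1,\ve_1,i)$ with $(m,\nu_2,\ve_2,j)$ to halve the casework.
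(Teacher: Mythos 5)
Your proposal is correct and follows essentially the same route as the paper: the identical ODE obtained from Proposition \ref{m-fermat-prop1} together with Lemma \ref{Fermat-lem2}, term-by-term integration giving the ${}_4F_3$-shape and the value of $C_1$, and determination of $C_0$ from Beilinson's formula by local analysis of the vanishing cycle at the node as $t\to 1$ (the paper, like you, writes out only the case $(\ve_1,\ve_2)=(1,1)$ in full and leaves the remaining cases to the same method). The one small difference is that the paper keeps the loop based at $y=0$, so that $\log g(O)=\log\nu_2^{-1}\in\Q(1)$ and the constant emerges from the $\int_\delta\log f\,\dlog g$ term via contour integrals around $y=1$, whereas you move the base point onto a small circle at the node so that $g(O)\to g(P)$; this is harmless modulo $\Q(1)$ since the regulator pairing depends only on the homology class of the cycle.
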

\begin{rem}\label{m-fermat-rem1}
It is worth noting 
\[
C_0=-\sum_{i=1}^{n-1}\sum_{j=1}^{m-1}
(1-\nu_1^{-i})(1-\nu_2^{-j})\frac{\ve_1^i\ve_2^j}{nm}
(2\psi(1)-\psi(a_i)-\psi(b_j))\mod \Q(1)
\]
\[
C_1=\sum_{i=1}^{n-1}\sum_{j=1}^{m-1}
(1-\nu_1^{-i})(1-\nu_2^{-j})\frac{\ve_1^i\ve_2^j}{nm}
\]
where $\psi(x)=\Gamma'(x)/\Gamma(x)$ is the digamma function.
Hence we can rewrite
\begin{align*}
&\frac{1}{2\pi\sqrt{-1}}\langle\reg(\xi)\mid\delta(\ve_1,\ve_2)\rangle
=\sum_{i=1}^{n-1}\sum_{j=1}^{m-1}
(1-\nu_1^{-i})(1-\nu_2^{-j})\frac{\ve_1^i\ve_2^j}{nm}\\
&\times\left(-2\psi(1)+\psi(a_i)+\psi(b_j)+\log(1-t)
+a_ib_j(1-t)\,{}_4F_3\left({a_i+1,b_j+1,1,1\atop 2,2,2};1-t\right)\right)
\end{align*}
modulo $\Q(1)$.
\end{rem}

\medskip

\noindent{\it Proof of Theorem \ref{m-fermat-thm1}.}
Put
\[
F:=\frac{1}{2\pi\sqrt{-1}}\langle\reg(\xi)\mid\delta(\ve_1,\ve_2)\rangle.
\]
By Proposition \ref{m-fermat-prop1} and \eqref{m-fermat-eq2}
\[
(t-1)\frac{dF}{dt}=-\sum_{i=1}^{n-1}\sum_{j=1}^{m-1}(1-\nu^{-i}_1)(1-\nu^{-j}_2)
\frac{1}{2\pi\sqrt{-1}}\int_{\delta(\ve_1,\ve_2)}\omega_{i,j}.
\]
By Lemma \ref{Fermat-lem2}
\begin{equation}\label{m-fermat-thm1-eq3}
(t-1)\frac{dF}{dt}=\sum_{i=1}^{n-1}\sum_{j=1}^{m-1}(1-\nu^{-i}_1)(1-\nu^{-j}_2)
\frac{\ve_1^i\ve_2^j}{nm}
F(a_i,b_j,1;1-t).
\end{equation}
This immediately implies the desired formula except the constant term ``$C_0$''.
Therefore it is enough to show 
\begin{equation}\label{m-fermat-thm1-eq1}
F=C_0+C_1\log(1-t)+o(t-1)\mod\Q(1)
\end{equation}
for $|t-1|\ll1$. Here ``$o(t-1)$'' denotes a continuous function which converges
to $0$ as $t\to 1$.
To do this we use Beilinson's formula (e.g.\,\cite{hain} Proposition 6.3)
\[
\langle\reg\{f,g\}\mid\gamma\rangle=\int_\gamma
\log f\frac{dg}{g}-\log g(O)\frac{df}{f}
\]
where $O$ is the origin of a loop $\gamma\in \pi_1(X_t,O)$
(it is important to fix the origin in the above formula).
We show \eqref{m-fermat-thm1-eq1} only in case $(\ve_1,\ve_2)=(1,1)$ (the others
are proven in a similar way):
\begin{equation}\label{m-fermat-thm1-eq2}
F=-\log(nm(1-\nu_1)(1-\nu_2))+\log(1-t)+o(1-t).
\end{equation}
Recall the loop $\delta:=\delta(1,1)$ with the origin $y=0$ from \eqref{Fermat-eq1}.
Beilinson's formula yields
\[
F=\frac{1}{2\pi\sqrt{-1}}
\int_\delta\left(\log\frac{x-1}{x-\nu_1}\dlog\frac{y-1}{y-\nu_2}-\log(\nu_2^{-1})
\dlog\frac{x-1}{x-\nu_1}\right).
\]
When $|t-1|\ll1$, $\delta$ is a circle in a neighborhood of $(x,y)=(1,1)$ and
\[
x-1=\frac{1-t}{y^m-1}(1+x+\cdots+x^{n-1})^{-1}
=\frac{1-t}{nm(y-1)}+o(t-1)
\] 
on $\delta$.
Therefore
\begin{align*}
\mbox{1st term of }F
&=\frac{1}{2\pi\sqrt{-1}}\int_\delta\log\left(
\frac{(1-t)(1-\nu_1)^{-1}}{nm(y-1)}\right)\dlog\frac{y-1}{y-\nu_2}+o(t-1)\\
&=\frac{1}{2\pi\sqrt{-1}}\oint_{y=1}\log\left(
\frac{(1-t)(1-\nu_1)^{-1}}{nm(y-1)}\right)\frac{dy}{y-1}\\
&\quad-\frac{1}{2\pi\sqrt{-1}}\oint_{y=1}\log\left(
\frac{(1-t)(1-\nu_1)^{-1}}{nm(y-1)}\right)\frac{dy}{y-\nu_2}+o(t-1)\\
&=\frac{-1}{2\pi\sqrt{-1}}\oint_{y=1}\log(y-1)\frac{dy}{y-1}
+\log \frac{(1-t)}{nm(1-\nu_1)}\\
&\quad+\frac{1}{2\pi\sqrt{-1}}\oint_{y=1}\log(y-1)\frac{dy}{y-\nu_2}+o(t-1)\\
&=\log \frac{(1-t)}{nm(1-\nu_1)}+\frac{1}{2\pi\sqrt{-1}}\oint_{y=1}\log(y-1)\frac{dy}{y-\nu_2}+o(t-1)\mod\Q(1)
\end{align*}
where ``$\oint_{y=1}$'' denotes the integral along a path with the origin
$y=0$ which goes around $y=1$ in the counter-clockwise diretion
and comes back to the origin (see figure).

\begin{center}
{\unitlength 0.1in%
\begin{picture}( 30.5000,  5.1200)(  3.6000,-17.7400)%
%
\special{pn 8}%
\special{pa 360 1610}%
\special{pa 3410 1610}%
\special{fp}%
\special{sh 1}%
\special{pa 3410 1610}%
\special{pa 3343 1590}%
\special{pa 3357 1610}%
\special{pa 3343 1630}%
\special{pa 3410 1610}%
\special{fp}%
%
\special{pn 4}%
\special{sh 1}%
\special{ar 2550 1610 16 16 0  6.28318530717959E+0000}%
%
\special{pn 4}%
\special{sh 1}%
\special{ar 660 1610 16 16 0  6.28318530717959E+0000}%
%
\special{pn 8}%
\special{ar 2555 1610 164 164  3.4358272  2.8674252}%
%
\special{pn 8}%
\special{pa 655 1605}%
\special{pa 2395 1565}%
\special{fp}%
\special{pa 655 1615}%
\special{pa 2395 1655}%
\special{fp}%
%
\special{pn 8}%
\special{pa 1854 1512}%
\special{pa 1644 1520}%
\special{fp}%
\special{sh 1}%
\special{pa 1644 1520}%
\special{pa 1711 1537}%
\special{pa 1697 1518}%
\special{pa 1710 1497}%
\special{pa 1644 1520}%
\special{fp}%
\special{pa 1652 1706}%
\special{pa 1856 1712}%
\special{fp}%
\special{sh 1}%
\special{pa 1856 1712}%
\special{pa 1790 1690}%
\special{pa 1803 1710}%
\special{pa 1789 1730}%
\special{pa 1856 1712}%
\special{fp}%
\put(6.6500,-13.4500){\makebox(0,0){$y=0$}}%
\put(25.5000,-13.3500){\makebox(0,0){$y=1$}}%
\end{picture}}%

\end{center}

Since
\begin{align*}
\frac{1}{2\pi\sqrt{-1}}\int_\delta\log(y-1)\frac{dy}{y-\nu_2}
&=
\log(-\nu_2)-\frac{1}{2\pi\sqrt{-1}}\oint_{y=1}
\log(y-\nu_2)\frac{dy}{y-1}\\
&=\log(-\nu_2)-\log(1-\nu_2)\\
&=\log(\nu_2)-\log(1-\nu_2)\mod\Q(1)
\end{align*}
we have
\[
\mbox{1st term of }F=
\log\left( \frac{1-t}{nm(1-\nu_1)(1-\nu_2)}\right)+\log(\nu_2)+o(t-1)\mod\Q(1).
\]
On the other hand
\begin{align*}
\mbox{2nd term of }F&=
\frac{-1}{2\pi\sqrt{-1}}\int_\delta\log(\nu_2^{-1})
\dlog\left(\frac{1-t}{nm(y-1)}\right)+o(t-1)\\
&=
\log(\nu_2^{-1})+o(t-1).
\end{align*}
We thus have
\[F=\log\left( \frac{1-t}{nm(1-\nu_1)(1-\nu_2)}\right)+o(t-1).\]
the desired assertion \eqref{m-fermat-thm1-eq2}.
This completes the proof of Theorem \ref{m-fermat-thm1}. \qed

\medskip

\begin{thm}\label{m-fermat-thm3}
Put $z:=(1-t)^{-1}$ and
\[
C_{a,b}
:=\frac{\sin(\pi a)}{\pi}B_{a,b}
=
\frac{\Gamma(b-a)}{\Gamma(1-a)\Gamma(b)}.
\]
Then we have an alternative description of the regulators in 
Theorem \ref{m-fermat-thm1}
\begin{multline*}
\frac{1}{2\pi\sqrt{-1}}\langle\reg(\xi)\mid\delta(\ve_1,\ve_2)\rangle
=-\sum_{i=1}^{n-1}\sum_{j=1}^{m-1}
(1-\nu_1^{-i})(1-\nu_2^{-j})\frac{\ve_1^i\ve_2^j}{nm}\\
\times(a_i^{-1}C_{a_i,b_j}(-z)^{a_i}\,F_{a_i,b_j}(z)
+b_j^{-1}C_{b_j,a_i}(-z)^{b_j}\,F_{b_j,a_i}(z))
\end{multline*}
modulo $\Q(1)=2\pi\sqrt{-1}\Q$.
\end{thm}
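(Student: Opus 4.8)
The plan is to reduce the statement to a per-index hypergeometric identity, to prove that identity by matching $t$-derivatives, and then to determine a single constant of integration.

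First I would rewrite the left-hand side by Remark \ref{m-fermat-rem1}: setting $c_{ij}:=(1-\nu_1^{-i})(1-\nu_2^{-j})\ve_1^i\ve_2^j/(nm)$, one has $\frac{1}{2\pi\sqrt{-1}}\langle\reg(\xi)\mid\delta(\ve_1,\ve_2)\rangle=\sum_{i,j}c_{ij}G_{ij}(t)$ with
\[
G_{ij}(t)=-2\psi(1)+\psi(a_i)+\psi(b_j)+\log(1-t)+a_ib_j(1-t)\,{}_4F_3\left({a_i+1,b_j+1,1,1\atop 2,2,2};1-t\right).
\]
Since the asserted right-hand side is $-\sum_{i,j}c_{ij}\bigl(a_i^{-1}C_{a_i,b_j}(-z)^{a_i}F_{a_i,b_j}(z)+b_j^{-1}C_{b_j,a_i}(-z)^{b_j}F_{b_j,a_i}(z)\bigr)$, it suffices to establish, for each fixed pair and with the abbreviations $a=a_i$, $b=b_j$, $w=1-t$, $z=w^{-1}$, and $F_{a,b}(z)={}_3F_2\!\left({a,a,a\atop 1+a-b,1+a};z\right)$, the identity
\[
G_{ij}(t)\equiv-\bigl(a^{-1}C_{a,b}(-z)^{a}F_{a,b}(z)+b^{-1}C_{b,a}(-z)^{b}F_{b,a}(z)\bigr)\pmod{\Q(1)}.
\]

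Next I would verify that both sides are antiderivatives in $t$ of the same function. On the left, the series identity $\frac{d}{dw}\bigl[ab\,w\,{}_4F_3(a+1,b+1,1,1;2,2,2;w)\bigr]=(F(a,b,1;w)-1)/w$, already implicit in the proof of Theorem \ref{m-fermat-thm1}, gives $(t-1)G_{ij}'(t)=F(a,b,1;1-t)$. On the right I would invoke the Gauss connection formula at $w=\infty$,
\[
F(a,b,1;w)=C_{a,b}(-w)^{-a}\,{}_2F_1(a,a,1+a-b;w^{-1})+C_{b,a}(-w)^{-b}\,{}_2F_1(b,b,1+b-a;w^{-1}),
\]
whose coefficients coincide with the $C_{a,b},C_{b,a}$ of the statement via $\Gamma(1)\Gamma(b-a)/(\Gamma(b)\Gamma(1-a))=C_{a,b}$, and note the branch identity $(-w)^{-a}=(-z)^{a}$. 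Differentiating $(-z)^{a}F_{a,b}(z)$, using $\frac{1}{a+k}=\frac1a\frac{(a)_k}{(a+1)_k}$ to collapse the ${}_3F_2$ into a ${}_2F_1$, and using $dz/dt=z^2$, one finds that the right-hand side has the same $t$-derivative. Hence the two sides differ by a constant $K_{ij}$.

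The main obstacle is evaluating $K_{ij}$. Letting $t\to\infty$, i.e.\ $z\to0$, the right-hand side tends to $0$ because $0<a,b<1$, so $K_{ij}=\lim_{t\to\infty}G_{ij}(t)$. Writing $g(w)=\sum_{n\ge1}\frac{(a)_n(b)_n}{n(n!)^2}w^n=\int_0^w\frac{{}_2F_1(a,b,1;u)-1}{u}\,du$, this reduces to showing that the classical connection constant $\lim_{w\to\infty}(\log w+g(w))$ equals $2\psi(1)-\psi(a)-\psi(b)$ modulo $\Q(1)$; together with the digamma terms built into $G_{ij}$ this forces $K_{ij}\equiv0\pmod{\Q(1)}$. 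I expect the genuinely delicate points to be (i) the precise digamma-valued asymptotics of the Gauss function integrated out to $u=\infty$ across its branch point at $u=1$, which is where the $\Q(1)$-ambiguity originates and which is closely related to the identity attributed to Zudilin in Lemma \ref{m-fermat-thm4}; (ii) the consistent choice of branches of $(-z)^{a}$ and $\log(1-t)$; and (iii) the degenerate case $a=b$ (that is, $i/n=j/m$), where the connection formula develops a logarithm and must be recovered by a confluence argument.
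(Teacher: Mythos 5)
Your reduction to a per-index identity via Remark \ref{m-fermat-rem1} and your derivative-matching step are exactly what the paper does: its Lemma \ref{m-fermat-thm4} (due to Zudilin) is proved ``modulo constant'' by applying $z\frac{d}{dz}$ and invoking the connection formula \cite{NIST} 15.8.2, just as you propose. The problem is your third step. Determining the constant of integration is the entire content of the theorem, and your proposal does not actually do it: you reduce it to the claim that
\[
\lim_{w\to\infty}\bigl(\log w+g(w)\bigr)\equiv 2\psi(1)-\psi(a)-\psi(b)\pmod{\Q(1)},
\qquad g(w)=\sum_{n\ge1}\frac{(a)_n(b)_n}{n\,(n!)^2}\,w^n,
\]
and then assert this is ``the classical connection constant,'' while listing its verification among the points you merely ``expect'' to be delicate. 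But given the derivative matching, this limit statement is \emph{equivalent} to the identity being proved (it is precisely Lemma \ref{m-fermat-thm4} read off at $z\to0$), so as written the argument is circular: the crux has been relocated, not established. Note also that $g$ has radius of convergence $1$, so the limit concerns an analytic continuation around the branch point $w=1$; the monodromy picked up there is a period-type quantity (an integral of $(1-u)^{1-a-b}$ times a holomorphic function against $du/u$), not a priori in $\Q(1)$, so ``modulo $\Q(1)$'' does not absorb the path-dependence automatically --- you must fix a path and a branch and then compute.

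The paper sidesteps exactly this difficulty by evaluating the constant at the other end, $z\to1^-$ (i.e.\ $t\to0^-$), where a classical closed formula is available: Slater's three-term relation (4.3.3) for ${}_3F_2$ at unit argument, specialized to $d=e=1$ and followed by a careful limit $c\to0$, produces the digamma constants $2\psi(1)-\psi(a)-\psi(b)$ and the $\pi i$ on the nose. Your route can be repaired --- for instance, a Mellin--Barnes representation of $g$ and the residue at the double pole $s=0$ give $g(w)=-\log(-w)+2\psi(1)-\psi(a)-\psi(b)+O(|w|^{-\min(a,b)})$ on $|\arg(-w)|<\pi$, which is exactly the missing input --- but some such computation (or a precise citation for the large-argument expansion in the logarithmic case) must be supplied; without it the proposal has a genuine gap at its only nontrivial step. (Your point about $a=b$ is fair but moot here: $C_{a,b}$ itself is undefined when $a_i=b_j$, so the statement implicitly assumes $i/n\ne j/m$, as in Theorem \ref{m-fermat-thm2}.)
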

\begin{proof}
This is immediate from Remark \ref{m-fermat-rem1} and the following lemma due to W. Zudilin.
\end{proof}
\begin{lem}[Zudilin]\label{m-fermat-thm4}
Let $z=(1-t)^{-1}$. Then
\begin{align*}
&\pi i+2\psi(1)-\psi(a)-\psi(b)-\log(1-t)
-ab(1-t)\,{}_4F_3\left({a+1,b+1,1,1\atop 2,2,2};1-t\right)\\
&=a^{-1}C_{a,b}(-z)^a\,{}_3F_2\left({a,a,a\atop 1+a-b,a+1};z\right)
+b^{-1}C_{b,a}(-z)^b\,{}_3F_2\left({b,b,b\atop 1-a+b,b+1};z\right).
\end{align*}
\end{lem}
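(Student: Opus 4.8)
The plan is to reduce the identity to a first-order differential equation in $t$ together with one boundary computation. Throughout I write $x:=1-t$ and $z:=x^{-1}=(1-t)^{-1}$, and I assume $a,b$ generic, namely $a-b\notin\Z$ and $\Re a,\Re b>0$ (which covers the application, where $a_i,b_j\in(0,1)$); the remaining cases follow by analytic continuation in $(a,b)$.

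First I would simplify the ${}_4F_3$. Since $(1)_n=n!$ and $(2)_n=(n+1)!$, a direct manipulation of Pochhammer symbols (using $ab(a+1)_n(b+1)_n=(a)_{n+1}(b)_{n+1}$) gives
\[
ab\,x\,{}_4F_3\left({a+1,b+1,1,1\atop 2,2,2};x\right)
=\sum_{k\ge1}\frac{(a)_k(b)_k}{k\,(k!)^2}\,x^k=:g(x),
\]
so that $g'(x)=\sum_{k\ge1}\frac{(a)_k(b)_k}{(k!)^2}x^{k-1}=(F(a,b,1;x)-1)/x$. Writing the left-hand side of the Lemma as $\mathrm{LHS}=\big(\pi i+2\psi(1)-\psi(a)-\psi(b)\big)-\log x-g(x)$, I obtain the clean formula
\[
\frac{d}{dx}\,\mathrm{LHS}=-\frac1x-g'(x)=-\frac{F(a,b,1;x)}{x}.
\]

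Next I would differentiate the right-hand side. Put $H(z):={}_3F_2\!\left({a,a,a\atop 1+a-b,a+1};z\right)$. The elementary identity $(a+n)(a)_n=a(a+1)_n$ yields term-by-term
\[
a\,H(z)+z\,H'(z)=a\sum_{n\ge0}\frac{(a)_n(a)_n}{(1+a-b)_n\,n!}z^n=a\,F(a,a,1+a-b;z),
\]
and combined with $z\frac{d}{dz}(-z)^a=a(-z)^a$ this gives $\frac{d}{dz}\big[(-z)^aH(z)\big]=a\,z^{-1}(-z)^aF(a,a,1+a-b;z)$. Using $z=1/x$, $dz/dx=-z^2$, and adding the symmetric $b$-term, I find
\[
\frac{d}{dx}\,\mathrm{RHS}=-z\Big[C_{a,b}(-z)^aF(a,a,1+a-b;z)+C_{b,a}(-z)^bF(b,b,1+b-a;z)\Big].
\]
The key point is that $C_{a,b}=\Gamma(b-a)/(\Gamma(1-a)\Gamma(b))$ are exactly the coefficients in Kummer's connection formula at $c=1$,
\[
F(a,b,1;x)=C_{a,b}(-x)^{-a}F(a,a,1+a-b;1/x)+C_{b,a}(-x)^{-b}F(b,b,1+b-a;1/x),
\]
so since $(-x)^{-a}=(-z)^a$ and $1/x=z$ this shows $\frac{d}{dx}\mathrm{RHS}=-F(a,b,1;x)/x=\frac{d}{dx}\mathrm{LHS}$. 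Hence $\mathrm{LHS}-\mathrm{RHS}$ is constant, as analytic functions on a common cut domain (e.g. $\C\setminus[1,\infty)$, to which the ${}_4F_3$ and the two ${}_3F_2$'s all continue).

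Finally I would pin the constant to $0$ by letting $x\to\infty$, i.e. $z\to0$: since $\Re a,\Re b>0$ the two ${}_3F_2$'s tend to $1$ while $(-z)^a,(-z)^b\to0$, so $\mathrm{RHS}\to0$. Thus the constant equals $\lim_{x\to\infty}\mathrm{LHS}$, and it remains to prove
\[
\lim_{x\to\infty}\big(\log x+g(x)\big)=\pi i+2\psi(1)-\psi(a)-\psi(b).
\]
This is the main obstacle. I would evaluate the limit through the Mellin transform of $F(a,b,1;\cdot)$: in a vertical strip $0<\Re w<\min(\Re a,\Re b)$ it equals $\Gamma(w)\Gamma(a-w)\Gamma(b-w)/(\Gamma(a)\Gamma(b)\Gamma(1-w))$, which has a simple pole at $w=0$ of residue $F(0)=1$ (this pole is precisely what the counterterm $\log x$ removes) and finite part $2\psi(1)-\psi(a)-\psi(b)$, computed from $\Gamma(w)\sim w^{-1}+\psi(1)$ and $\psi(1)=-\gamma$. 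The extra $\pi i$ is a branch contribution: the Lemma involves $F(a,b,1;s)$ on the cut $[1,\infty)$ (the ``$+s$'' transform), and rotating the Mellin contour from the classical ``$-s$'' transform across the singularity at $s=1$ produces exactly $\pi i$. Tracking this branch term carefully is the delicate part; everything else is elementary series algebra and the classical ${}_2F_1$ connection formula.
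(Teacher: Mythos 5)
Your proposal is correct, and its first half coincides with the paper's own proof: both differentiate the identity (you use $d/dx$, the paper uses $z\,d/dz$) and observe that the derivative identity is precisely the classical connection formula \cite{NIST} 15.8.2, so the two sides can differ only by a constant; your series manipulations ($g(x)=\sum_{k\ge1}\frac{(a)_k(b)_k}{k(k!)^2}x^k$, the contiguous relation $aH+zH'=aF(a,a,1+a-b;z)$) are all correct. Where you genuinely diverge is in pinning the constant. The paper evaluates at the opposite endpoint, $t\to0^-$ (so $z\to1^-$), invoking Slater's three-term relation (4.3.3) in \cite{slater} for ${}_3F_2$ at unit argument and performing a delicate $c\to0$ limit which simultaneously produces the value $ab\,{}_4F_3(\dots;1)$ and the constant $\pi i+2\psi(1)-\psi(a)-\psi(b)$. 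You instead evaluate at $z\to0$ ($x\to\infty$), where the right-hand side manifestly vanishes, reducing everything to the single asymptotic
\[
g(x)=-\log x+\pi i+2\psi(1)-\psi(a)-\psi(b)+o(1),\qquad x\to\infty,
\]
and your Mellin data are right: the transform of $u\mapsto F(a,b,1;-u)$ is $\Gamma(w)\Gamma(a-w)\Gamma(b-w)/(\Gamma(a)\Gamma(b)\Gamma(1-w))$ on $0<\Re w<\min(\Re a,\Re b)$, with residue $1$ at $w=0$ (matching the $\log$ counterterm) and constant term $2\psi(1)-\psi(a)-\psi(b)$. This route buys independence from the tabulated Slater identity — only classical ${}_2F_1$ theory is used — and the evaluation point is simpler since one side dies; the cost is the branch bookkeeping for $\pi i$, which you rightly flag as the delicate point but describe somewhat loosely (no contour is ever rotated "across the singularity at $s=1$"). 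The clean way to finish your argument: let $x\to-\infty$ along the negative real axis, i.e.\ work in the upper half $x$-plane, where LHS and RHS are both analytic with principal branches and the derivative identity holds. On that ray $F(a,b,1;x)$ and $g(x)$ are real and far from the cut $[1,\infty)$, so the classical ``$-s$'' Mellin computation applies verbatim and gives
\[
\lim_{u\to+\infty}\bigl(g(-u)+\log u\bigr)
=\int_0^1\frac{F(a,b,1;-v)-1}{v}\,dv+\int_1^\infty\frac{F(a,b,1;-v)}{v}\,dv
=2\psi(1)-\psi(a)-\psi(b),
\]
while the $\pi i$ comes for free from $\log x=\log|x|+\pi i$ there; this also makes transparent that the stated identity, with $+\pi i$, is the branch valid for $\mathrm{Im}(x)\ge0$. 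With that adjustment your proof is complete and is a legitimate alternative to the paper's.
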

\begin{proof}
Apply $z\frac{d}{dz}$ on the both side. 
Then it turns out,
\[
{}_2F_1(a,b,1;1-t)=C_{a,b}(-z)^aF(a,a,1+a-b;z)+C_{b,a}(-z)^bF(b,b,1-a+b;z),
\]
and this is valid (\cite{NIST} 15.8.2).
This proves Lemma \ref{m-fermat-thm4} modulo constant.
To remove `modulo constant', 
 we consider the limit $t\to 0^-$ so that $z=(1-t)^{-1}\to 1^-$.

Recall the formula (4.3.3) in Slater's book \cite{slater} (also on page 15 of Bailey's book \cite{Bailey}), which we write for the case $d=e=1$ for the choice $+$ of the sign in the exponent:
\begin{align*}
&
\Gamma(a)\Gamma(b)\Gamma(c)\,{}_3F_2\biggl(\begin{matrix} a, \, b, \,c \\ 1, \, 1 \end{matrix}\biggm|1\biggr)
\\ &\quad
=e^{\pi ia}\frac{\Gamma(a)\Gamma(b-a)\Gamma(c-a)}{\Gamma(1-a)^2}\,{}_3F_2\biggl(\begin{matrix} a, \, a, \, a \\ 1+a-b, \, 1+a-c \end{matrix}\biggm|1\biggr)
\\ &\quad\qquad
+e^{\pi ib}\frac{\Gamma(b)\Gamma(a-b)\Gamma(c-b)}{\Gamma(1-b)^2}\,{}_3F_2\biggl(\begin{matrix} b, \, b, \, b \\ 1+b-a, \, 1+b-c \end{matrix}\biggm|1\biggr)
\\ &\quad\qquad
+e^{\pi ic}\frac{\Gamma(c)\Gamma(a-c)\Gamma(b-c)}{\Gamma(1-c)^2}\,{}_3F_2\biggl(\begin{matrix} c, \, c, \, c \\ 1+c-a, \, 1+c-b \end{matrix}\biggm|1\biggr).
\end{align*}
Dividing the both sides by $\Gamma(a)\Gamma(b)$, putting the two summands on the right on one side and taking the limit as $c\to0$ we get
\begin{align*}
&
a^{-1}e^{\pi ia}C_{a,b}\cdot{}_3F_2\biggl(\begin{matrix} a, \, a, \, a \\ 1+a-b, \, 1+a \end{matrix}\biggm|1\biggr)
+b^{-1}e^{\pi ia}C_{b,a}\cdot{}_3F_2\biggl(\begin{matrix} b, \, b, \, b \\ 1+b-a, \, 1+b \end{matrix}\biggm|1\biggr)
\\ &\quad
=\lim_{c\to0}\biggl(e^{\pi ic}\frac{\Gamma(c)\Gamma(a-c)\Gamma(b-c)}{\Gamma(a)\Gamma(b)\Gamma(1-c)^2}
\,{}_3F_2\biggl(\begin{matrix} c, \, c, \, c \\ 1+c-a, \, 1+c-b \end{matrix}\biggm|1\biggr)
\\ &\quad\qquad\qquad\qquad
-\Gamma(c)\,{}_3F_2\biggl(\begin{matrix} a, \, b, \,c \\ 1, \, 1 \end{matrix}\biggm|1\biggr)\biggr).
\end{align*}
We have
\begin{align*}
\lim_{c\to0}\Gamma(c)\biggl({}_3F_2\biggl(\begin{matrix} a, \, b, \,c \\ 1, \, 1 \end{matrix}\biggm|1\biggr)-1\biggr)
&=\lim_{c\to0}\sum_{n=1}^\infty\frac{(a)_n(b)_n\Gamma(c+n)}{n!^3}
=\sum_{n=1}^\infty\frac{(a)_n(b)_n\Gamma(n)}{n!^3}
\\
&=\sum_{n=0}^\infty\frac{(a)_{n+1}(b)_{n+1}n!}{(n+1)!^3}
=ab\,{}_4F_3\biggl(\begin{matrix} 1+a, \, 1+b, \, 1, \, 1 \\ 2, \, 2, \, 2 \end{matrix}\biggm|1\biggr).
\end{align*}
We are left with the limit
$$
L=\lim_{c\to0}\Gamma(c)\biggl(\frac{e^{\pi ic}\Gamma(a-c)\Gamma(b-c)}{\Gamma(a)\Gamma(b)\Gamma(1-c)^2}
\,{}_3F_2\biggl(\begin{matrix} c, \, c, \, c \\ 1+c-a, \, 1+c-b \end{matrix}\biggm|1\biggr)-1\biggr).
$$
We now use $\Gamma(c)=\Gamma(1+c)/c\sim1/c$, $e^{\pi ic}=1+\pi ic+o(c)$,
$\Gamma(x-c)=\Gamma(x)-\Gamma'(x)c+o(c)=\Gamma(x)(1-\psi(x)c)+o(c)$ for $x\in\{a,b,1\}$, and
\begin{align*}
\frac{(c)_n^3}{n!\,(1+c-a)_n(1+c-b)_n}
&=c^3\cdot\frac{(n-1)!}{n!\,(1-a)_n(1-b)_n}+o(c^3)
\\
&=o(c^2) \quad\text{for}\; n=1,2,3,\dots,
\end{align*}
as $c\to0$. This implies that
\begin{align*}
L
&=\lim_{c\to0}\frac1c\biggl(\frac{e^{\pi ic}\Gamma(a-c)\Gamma(b-c)}{\Gamma(a)\Gamma(b)\Gamma(1-c)^2}-1\biggr)
\\
&=\lim_{c\to0}\frac1c\biggl(\frac{(1+\pi ic+o(c))(1-\psi(a)c+o(c))(1-\psi(b)c+o(c))}{(1-\psi(1)c+o(c))^2}-1\biggr)
\\
&=\pi i-\psi(a)-\psi(b)+2\psi(1)
\end{align*}
as required.
\end{proof}

\begin{thm}\label{m-fermat-thm2}
Let $e:\Q[\mu_n\times\mu_m]\to E$ be a projection onto a number field $E$
which does not factor through projections 
$\mu_n\times\mu_m\to\mu_n$ or $\mu_n\times\mu_m\to\mu_m$.
We denote by $\reg(\xi)(e)\in \Hom(H^B_1(X_t,\Q)(e),\C/\Q(2))$ the $e$-part,
and $\gamma(e)\in H^B_1(X_t)(e)$ as well. 
Let $I_e$ be the set of indices as in \eqref{Fermat-lem1-eq1}.
Put $z:=(1-t)^{-1}$ and
\[
B_{a,b}:=
B(a,b-a)=\frac{\Gamma(a)\Gamma(b-a)}{\Gamma(b)},
\quad F_{a,b}(z):={}_3F_2\left(
{a,a,a\atop 1+a-b,a+1};z
\right).
\]
Write $a_i:=1-i/n$ and $b_j:=1-j/m$. Assume 
\begin{equation}\label{m-fermat-thm2-eq1}
a_i\ne b_j\,(\Leftrightarrow\, i/n\ne j/m),\quad \forall\,(i,j)\in I_e
\end{equation}
or equivalently the diagram
\[
\xymatrix{
\hat\Z(1)=\varprojlim_l\mu_l\ar[r]^{\mathrm{can}}\ar[d]_{\mathrm{can}}
&\mu_n\cong\mu_n\times\{1\}\ar[d]^e\\
\mu_m\cong\{1\}\times\mu_m\ar[r]^e&E^\times
}
\]
does not commute. 
Then for $|t|<1$
\begin{align*}
\langle\reg(\xi)(e)\mid\gamma(\ve_1,\ve_2)(e)\rangle
=&-\sum_{(i,j)\in I_e}
(1-\nu_1^{-i})(1-\nu_2^{-j})\frac{\ve_1^i\ve_2^j}{nm}\\
&\times(a_i^{-1}B_{a_i,b_j}z^{a_i}\,F_{a_i,b_j}(z)
+b_j^{-1}B_{b_j,a_i}z^{b_j}\,F_{b_j,a_i}(z))
\end{align*}
modulo $\Q(2)$.
\end{thm}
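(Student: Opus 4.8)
The plan is to run the same differential-equation argument as in the proof of Theorem \ref{m-fermat-thm1}, but pairing $\reg(\xi)(e)$ against the cycle $\gamma(\ve_1,\ve_2)$ of Lemma \ref{Fermat-lem3} instead of the vanishing cycle $\delta(\ve_1,\ve_2)$, and then to identify the resulting antiderivative with the claimed ${}_3F_2$-expression via a hypergeometric connection formula. Write $G:=\langle\reg(\xi)(e)\mid\gamma(\ve_1,\ve_2)(e)\rangle$. Since $\gamma$ was chosen with $(T_0-1)\gamma(\ve_1,\ve_2)=0$ in Lemma \ref{Fermat-lem3}, this $G$ is single-valued near the puncture $t=0$. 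First I would apply Proposition \ref{m-fermat-prop1} together with \eqref{m-fermat-eq2} and Lemma \ref{Fermat-lem3}; because the pairing only sees the $e$-part, the double sum collapses to a sum over $I_e$ and one gets
\[
(t-1)\frac{dG}{dt}=-\sum_{(i,j)\in I_e}(1-\nu_1^{-i})(1-\nu_2^{-j})\frac{\ve_1^i\ve_2^j}{nm}B(a_i,b_j)F(a_i,b_j,a_i+b_j;t),
\]
where $B(a_i,b_j)$ is the ordinary Beta value supplied by Lemma \ref{Fermat-lem3}.

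Next I would produce the antiderivative. Exactly as in the proof of Lemma \ref{m-fermat-thm4}, applying $z\frac{d}{dz}$ to $a^{-1}z^aF_{a,b}(z)$ and using $(a)_k/(a+1)_k=a/(a+k)$ gives $z\frac{d}{dz}[a^{-1}z^aF_{a,b}(z)]=z^aF(a,a,1+a-b;z)$, and likewise with $a,b$ interchanged. Hence it suffices to establish the connection formula
\[
B(a,b)F(a,b,a+b;t)=B_{a,b}\,z^aF(a,a,1+a-b;z)+B_{b,a}\,z^bF(b,b,1-a+b;z),\qquad z=(1-t)^{-1},
\]
which relates the Kummer solution $F(a,b,a+b;t)$ at $t=0$ to the two solutions at $t=\infty$ (equivalently at $z=0$): this is the $t\to\infty$ connection formula (\cite{NIST} 15.8.2) followed by a Pfaff transformation carrying the argument $1/t$ into $z=1/(1-t)$. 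It is precisely here that the hypothesis \eqref{m-fermat-thm2-eq1}, $a_i\ne b_j$, enters, since $a=b$ is the confluent (logarithmic) case, in which $\Gamma(b-a)$ has a pole and the two exponents at $z=0$ collide. Combining the two displays shows that $G$ and the proposed right-hand side $G_{\mathrm{prop}}$ have the same derivative, so that $G-G_{\mathrm{prop}}$ is a constant; throughout, the branch of $z^{a}$ for $t\in(0,1)$ (where $z>1$ and the ${}_2F_1$'s are continued across their singularity at $z=1$) and the orientation of $\gamma$ must be tracked, just as the signs were tracked in Proposition \ref{m-fermat-prop1}.

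Finally I would pin down this constant modulo $\Q(2)$, which I expect to be the main obstacle. Because $(T_0-1)\gamma(\ve_1,\ve_2)=0$, the function $G$ has a finite limit $G(0)=\lim_{t\to 0}G(t)$ with no logarithmic term, while $G_{\mathrm{prop}}(0)$ is the explicit value obtained by setting $z=1$ in the two ${}_3F_2$'s, convergent since the excess $2-a_i-b_j>0$. Thus the constant equals $G(0)-G_{\mathrm{prop}}(0)$, and the assertion is that it lies in $\Q(2)$. To evaluate $G(0)$ I would invoke Beilinson's explicit formula for $\langle\reg\{f,g\}\mid\gamma\rangle$ as in the proof of Theorem \ref{m-fermat-thm1}, now at the base point $t=0$ with the $T_0$-invariant cycle; the delicate step is to describe the limiting cycle $\lim_{t\to0}\gamma(\ve_1,\ve_2)$ in the degenerate fiber and to recognize the resulting iterated integral as $G_{\mathrm{prop}}(0)$ modulo $(2\pi\sqrt{-1})^2\Q$. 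An alternative anchor is the limit $t\to\infty$ (i.e.\ $z\to0$), where $z^{a_i}F_{a_i,b_j}(z)\to0$ forces $G_{\mathrm{prop}}\to0$, reducing the task to showing $G\to0$ modulo $\Q(2)$ there. In either case the genuine input is the transcendence-free identification of a boundary value of the regulator with a convergent hypergeometric special value, and that matching is where the real work lies.
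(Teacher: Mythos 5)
Your first two steps coincide with the paper's proof: the differential equation
\[
(t-1)\frac{dF}{dt}=-\sum_{(i,j)\in I_e}(1-\nu^{-i}_1)(1-\nu^{-j}_2)
\frac{\ve_1^i\ve_2^j}{nm}B(a_i,b_j)F(a_i,b_j,a_i+b_j;t)
\]
obtained from Proposition \ref{m-fermat-prop1}, \eqref{m-fermat-eq2} and Lemma \ref{Fermat-lem3}, followed by the connection formula $B(a,b)F(a,b,a+b;t)=B_{a,b}z^aF(a,a,1+a-b;z)+B_{b,a}z^bF(b,b,1-a+b;z)$ (the paper cites \cite{NIST} 15.8.3 directly, with the branch choices spelled out), which is exactly where the hypothesis $a_i\ne b_j$ enters. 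Up to this point your argument is sound, including the verification that $z\frac{d}{dz}\bigl[a^{-1}z^aF_{a,b}(z)\bigr]=z^aF(a,a,1+a-b;z)$.

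The gap is the constant term, and it is not a small one: you correctly identify it as ``where the real work lies'' but then do not supply the idea that closes it, and both substitutes you sketch run into essential difficulties. Your anchor at $t\to\infty$ is circular as stated: since the ${}_3F_2$ terms vanish as $z\to 0$, the assertion ``$G\to 0$ modulo $\Q(2)$ at $t\to\infty$'' \emph{is} the assertion $C\in\Q(2)$, and there is no way to ``evaluate'' the regulator there, because the fiber degenerates at $t=\infty$ and the cycle $\gamma(\ve_1,\ve_2)$ has nontrivial monodromy around $t=\infty$ (its eigenvalues $p_i=e^{2\pi\sqrt{-1}a_i}$, $q_j=e^{2\pi\sqrt{-1}b_j}$ are all $\ne 1$), so $F$ has no single-valued limit at that puncture. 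Your anchor at $t=0$ requires computing the regulator pairing on the degenerate fiber and matching it against a sum of ${}_3F_2(1)$ values ``transcendence-free''; nothing in the paper (or in your sketch) provides such a computation. The paper's actual resolution avoids all limits: it exploits the multivaluedness itself. One fixes a loop representing $T_\infty$, forms the annihilating operator $Q:=\prod_{(i,j)\in I_e}(T_\infty-p_i)(T_\infty-q_j)$, and observes two things. First, writing $F=\langle e_\dR,\wt\gamma_t\rangle-\langle e_{B,t},\wt\gamma_t\rangle$ as in Proposition \ref{m-fermat-prop1}, applying $Q$ gives $QF\in\Q(2)$, because $Q$ kills $H^B_1(X_t,\Q)(e)$ so $Q\wt\gamma_t$ contributes only rational periods, and the second pairing already lies in $\Q(2)$. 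Second, $Q$ annihilates every term $z^{a_i}F_{a_i,b_j}(z)$ and $z^{b_j}F_{b_j,a_i}(z)$, since these are eigenfunctions of $T_\infty$ with eigenvalues $p_i$, $q_j$. Hence $QF=QC=\bigl(\prod_{(i,j)\in I_e}(1-p_i)(1-q_j)\bigr)C$, and since $p_i,q_j\ne1$ the scalar is nonzero, forcing $C\in\Q(2)$. This monodromy-annihilation argument is the theorem's key idea, and it is precisely what your proposal is missing.
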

\begin{proof}
Put
\[
F:=\langle\reg(\xi)(e)\mid\gamma(\ve_1,\ve_2)(e)\rangle.
\]
By Proposition \ref{m-fermat-prop1} and \eqref{m-fermat-eq2}
\[
(t-1)\frac{dF}{dt}=-\sum_{(i,j)\in I_e}(1-\nu^{-i}_1)(1-\nu^{-j}_2)
\int_{\gamma(\ve_1,\ve_2)}\omega_{i,j}.
\]
By Lemma \ref{Fermat-lem3}
\begin{equation}\label{branch-eq1}
(t-1)\frac{dF}{dt}=-\sum_{(i,j)\in I_e}(1-\nu^{-i}_1)(1-\nu^{-j}_2)
\frac{\ve_1^i\ve_2^j}{nm}
B(a_i,b_j)F(a_i,b_j,a_i+b_j;t).
\end{equation}
In particular $F$ is holomorphic at $|t|<1$.
For $a\ne b$
there is a formula (\cite{NIST} 15.8.3)
\begin{equation}\label{branch-eq2}
B(a,b)F(a,b,a+b;t)=B_{a,b}z^aF(a,a,1+a-b;z)+B_{b,a}z^bF(b,b,1-a+b;z).
\end{equation}
Here we take the branches such that
$F(a,b,a+b;t)$, $F(a,a,1+a-b;z)$ and $F(b,b,1-a+b;z)$ are holomorphic
on the region $|t|<1<|1-t|$ ($\Leftrightarrow$
$|z|<1$ and $\mathrm{Re}(z)>1/2$)
and $z^a$ and $z^b$ take the principal values on 
$|\mathrm{arg}(z)|< \pi$. 
Then \eqref{branch-eq1} and \eqref{branch-eq2}
immediately imply the desired formula except the constant term:
\[
F=C+\sum_{(i,j)\in I_e}
(1-\nu_1^{-i})(1-\nu_2^{-j})\frac{\ve_1^i\ve_2^j}{nm}(a_i^{-1}B_{a_i,b_j}z^{a_i}\,F_{a_i,b_j}(z)
+b_j^{-1}B_{b_j,a_i}z^{b_j}\,F_{b_j,a_i}(z)).
\]
To conclude $C\in\Q(2)$, we see the local monodromy $T_\infty$ at $t=\infty$.
We fix a loop $T_\infty$ such that the origin is a point in the interval $-1<t<0$
and it goes along the negative real axis and
turns around $t=\infty$.

\bigskip

{\unitlength 0.1in%
\begin{picture}( 40.6000,  5.4600)(  4.0000, -9.8200)%
%
\special{pn 8}%
\special{pa 400 690}%
\special{pa 4180 690}%
\special{fp}%
\special{sh 1}%
\special{pa 4180 690}%
\special{pa 4113 670}%
\special{pa 4127 690}%
\special{pa 4113 710}%
\special{pa 4180 690}%
\special{fp}%
%
\special{pn 20}%
\special{ar 890 700 189 189  0.5977584  0.5585993}%
%
\special{pn 8}%
\special{pa 2410 770}%
\special{pa 2050 770}%
\special{fp}%
\special{sh 1}%
\special{pa 2050 770}%
\special{pa 2117 790}%
\special{pa 2103 770}%
\special{pa 2117 750}%
\special{pa 2050 770}%
\special{fp}%
%
\special{pn 8}%
\special{pa 2030 610}%
\special{pa 2420 610}%
\special{fp}%
\special{sh 1}%
\special{pa 2420 610}%
\special{pa 2353 590}%
\special{pa 2367 610}%
\special{pa 2353 630}%
\special{pa 2420 610}%
\special{fp}%
%
\special{pn 4}%
\special{sh 1}%
\special{ar 884 694 16 16 0  6.28318530717959E+0000}%
%
\special{pn 8}%
\special{ar 886 720 258 258  4.4340893  4.9421307}%
%
\special{pn 8}%
\special{pa 938 470}%
\special{pa 952 474}%
\special{fp}%
\special{sh 1}%
\special{pa 952 474}%
\special{pa 893 436}%
\special{pa 901 459}%
\special{pa 882 475}%
\special{pa 952 474}%
\special{fp}%
%
\special{pn 8}%
\special{ar 888 698 258 258  1.2924967  1.8005381}%
%
\special{pn 8}%
\special{pa 836 948}%
\special{pa 822 944}%
\special{fp}%
\special{sh 1}%
\special{pa 822 944}%
\special{pa 881 982}%
\special{pa 873 959}%
\special{pa 892 943}%
\special{pa 822 944}%
\special{fp}%
\put(8.9500,-6.2000){\makebox(0,0){{\tiny$t=\infty$}}}%
\put(39.7600,-5.9200){\makebox(0,0){{\tiny$t=0$}}}%
\put(44.6000,-6.9000){\makebox(0,0)[lt]{Figure of $T_\infty$}}%
%
\special{pn 20}%
\special{pa 1080 690}%
\special{pa 3885 695}%
\special{fp}%
%
\special{pn 4}%
\special{sh 1}%
\special{ar 3974 690 16 16 0  6.28318530717959E+0000}%
\special{sh 1}%
\special{ar 3974 690 16 16 0  6.28318530717959E+0000}%
%
\special{pn 8}%
\special{pa 3866 722}%
\special{pa 3910 666}%
\special{fp}%
\special{pa 3906 724}%
\special{pa 3868 666}%
\special{fp}%
\end{picture}}%

\bigskip

The eigenvalues of $T_\infty$ on $H^B_1(X_t)(e)$ are
those of the hypergeometric function $F(a_i,b_j,a_i+b_j;t)$, and hence they are
\[
p_i:=\exp(2\pi\sqrt{-1}a_i),\quad
q_j:=\exp(2\pi\sqrt{-1}b_j),\quad (i,j)\in I_e.
\]
Put a mondromy operator
\[
Q:=\prod_{(i,j)\in I_e}(T_\infty-p_i)(T_\infty-q_j)\in \Z[T_\infty].
\]
We use the notation in the proof of Proposition \ref{m-fermat-prop1}
\[
F=\langle e_\dR-e_B,\gamma_t\rangle
=\langle e_\dR,\wt\gamma_t\rangle-\langle e_B,\wt\gamma_t\rangle,\quad
\gamma_t:=\gamma(\ve_1,\ve_2).
\]
Note that $e_B=e_{B,t}$ may have nontrivial monodromy.
Apply $Q$ to the above, we then have
\[
QF=\overbrace{\langle e_\dR,Q\wt\gamma_t\rangle}^{\Q(2)}
-Q\overbrace{\langle e_{B,t},\wt\gamma_t\rangle}^{\Q(2)}\in\Q(2).
\]
On the other hand, since
$F_{a_i,b_j}(z)$ and $F_{b_j,a_i}(z)$ are holomorphic along the loop $T_\infty$ fixed above,
one has
\[
(T_\infty-p_i)(z^{a_i}F_{a_i,b_j}(z))=0,\quad
(T_\infty-q_j)(z^{b_j}F_{b_j,a_i}(z))=0.
\]
Therefore
\[
QF=QC=\left(\prod_{(i,j)\in I_e}(1-p_i)(1-q_j)\right)C\in \Q(2).\]
Since $p_i,q_j\ne1$ by definition, one concludes $C\in\Q(2)$. 
This completes the proof of Theorem \ref{m-fermat-thm1}.
\end{proof}
\section{Regulators of $K_2$ of HG fibration of Gauss type}\label{m-gauss-sect}
In this section the base field is $\C$.

\subsection{Construction of elements of $H^2_\cM(X_t,\Q(2))$}\label{K2-sect}
Let $f:X\to \P^1$ be a HG fibration defined with multiplication by $(R,e)$.
Let $Y=f^{-1}(1)_{\mathrm{red}}$ be the reduced fiber over $t=1$. 
We assume that $Y$ is 
a normal crossing divisor in $X$.
Let $\partial_\cM:H^2_\cM(X\setminus Y,\Q(2))\to H^3_{\cM,Y}(X,\Q(2))$ be the boundary map arising from the localization
sequence of motivic cohomology groups.
Let
$c_B:H^3_{\cM,Y}(X,\Q(2))\to H^3_Y(X,\Q(2))\cap H^{0,0}$.
be the Betti realization map. 
Let 
\[\partial:=c_B\circ\partial_\cM:H^2_\cM(X\setminus Y,\Q(2))\lra
H^3_Y(X,\Q(2))\cap H^{0,0}
\] 
be the composition, which we call the boundary map.
There is a natural injection
\[
T:=\Coker[T_1-1:R^1f_*\Q(2)\to R^1f_*\Q(1)]\hra
H^3_Y(X,\Q(2))
\]
where $T_1$ denotes the local monodromy at $t=1$.
One has
$T\cap H^{0,0}=H^3_Y(X,\Q(2))\cap H^{0,0}$.
The ring $R$ acts on $T$ and hence on $T\cap H^{0,0}$.
By the last condition of HG fibrations (see \S \ref{HG-defn}),
$T(e)\cong E$ as $E$-module and the Hodge type is $(0,0)$.
Therefore $T(e)\cap H^{0,0}=T(e)\cong E$.
We write by
\begin{equation}\label{boundary}
\partial(e):H^2_\cM(X\setminus Y,\Q(2))\lra (T\cap H^{0,0})(e)=T(e)\cap H^{0,0}\cong E
\end{equation}
the composition of $\partial$ with the projection onto the $e$-part.

\begin{thm}\label{boundary-thm1}
Let $f$ be a HG fibration of Gauss type 
\[
X_t=f^{-1}(t):y^N=x^a(1-x)^b(1-tx)^{N-b},\quad 1\leq a,b<N,\,\gcd(N,a,b)=1
\]
with multiplication by $(\Q[\mu_N],e)$ such that the projection $e:\Q[\mu_N]\to E$
satisfies $ad/N,bd/N\not\in\Z$, $d:=\sharp\ker(e:\mu_N\to E^\times)$.
Suppose
\begin{equation}\label{boundary-thm1-eq1}\tag{$\ast$}
\gcd(N,a)=\gcd(N,b)=1.
\end{equation}
Then the map $\partial(e)$ in \eqref{boundary} is surjective.
\end{thm}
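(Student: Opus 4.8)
The plan is to reduce surjectivity to the non-vanishing of a single explicit boundary value, and then to produce that value from a $K_2$-symbol built out of the coordinate functions. First, observe that $T(e)\cong E$ is free of rank one over $E$ and that the whole construction of $\partial$ is equivariant for the $R=\Q[\mu_N]$-action: the automorphism $(x,y)\mapsto(x,\zeta y)$ fixes $t$, hence preserves $Y=f^{-1}(1)_{\mathrm{red}}$ and acts compatibly on the localization sequence. Therefore $\partial(e)$ is, on $e$-parts, an $E$-linear map into $E$, and its image is an $E$-subspace of $E$; it is surjective as soon as it is nonzero. It thus suffices to exhibit one class $\xi\in H^2_\cM(X\setminus Y,\Q(2))$ with $\partial(e)(\xi)\ne0$.

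Next I would construct $\xi$ from $x,\,1-x,\,1-tx,\,y$. The hypothesis $\gcd(N,a)=\gcd(N,b)=1$ ensures that the degree-$N$ cover $X_t\to\P^1$, $(x,y)\mapsto x$, is totally ramified over each of $x=0,1,1/t,\infty$, so these loci are single points $P_0,P_1,P_t,P_\infty$ with integral orders
\[
\ord_{P_0}(x)=N,\ \ord_{P_0}(y)=a,\qquad \ord_{P_1}(1-x)=N,\ \ord_{P_1}(y)=b,
\]
and analogously at $P_t,P_\infty$. Using these divisors I would solve the finite system of Steinberg/reciprocity relations forcing the tame symbol of $\xi$ to be trivial along every prime divisor of $X\setminus Y$ (the horizontal components together with $f^{-1}(0)$ and $f^{-1}(\infty)$), leaving nontrivial behaviour only along $Y$, where $P_t$ and $P_1$ collide. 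The total ramification, i.e. the integrality of all these orders, is exactly what makes the system solvable; this is the sole role of the two gcd conditions beyond those already imposed on a HG fibration.

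I would then read off the boundary through its de Rham shadow. Since $\xi$ is regular on $X\setminus Y$, the form $\dlog(\xi)$ lies in $\vg(X,\Omega^2_X(\log Y))$, so by Lemma \ref{Gauss-lem2} its $e$-part equals $\sum_{n\in I_e}c_n\frac{dt}{t-1}\omega_n$ for suitable constants $c_n$. The de Rham realization of $\partial(e)(\xi)$ is the Poincar\'e residue $\Res_Y\dlog(\xi)(e)=\sum_{n\in I_e}c_n\,\omega_n|_Y$, viewed inside $T(e)_\dR$. As $T(e)$ is one-dimensional over $E$ and pure of type $(0,0)$, its Betti and de Rham realizations correspond under the period comparison, so a nonzero residue forces $\partial(e)(\xi)\ne0$. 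The problem then reduces to checking $c_n\ne0$ for some $n\in I_e$, and $I_e\ne\emptyset$ — equivalently $ad/N,\,bd/N\notin\Z$ — guarantees that such an $\omega_n$ actually occurs.

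The main obstacle is the construction step: producing a single symbol whose horizontal tame symbols all vanish yet whose residue along $Y$ survives in the $e$-part. The tame-symbol bookkeeping at the four totally ramified points (and the compatibility checks at $t=0,\infty$) is the technical heart, and it is precisely here that $\gcd(N,a)=\gcd(N,b)=1$ is indispensable: without total ramification the relevant orders cease to be integral, the horizontal residues no longer cancel, and one expects surjectivity itself to fail. The final identification of the de Rham residue with the Hodge-theoretic class $\partial(e)(\xi)$ is provided by the localization sequence already used in the proof of Lemma \ref{Gauss-lem2}.
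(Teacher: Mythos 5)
Your opening reduction is correct: the boundary map is equivariant for the $\mu_N$-action, so the image of $\partial(e)$ is an $E$-subspace of $T(e)\cong E$ and surjectivity is equivalent to non-vanishing. The fatal gap is the construction step, which you leave as "solve the Steinberg/reciprocity system" — and in fact no symbol of the kind you propose can work. Suppose $\xi$ is built from the functions $x$, $1-x$, $1-tx$, $y$ (and $t$, $1-t$). On the component $D$ of $Y$ (the proper transform of the central fiber, with normalization $D'\cong\P^1$, coordinate $u$, $x|_D=u^N$, $y_0|_D=u^a$), the restrictions of all these functions are of the form $c\,u^k(1-u^N)^l$: for instance $y|_D=u^a(1-u^N)$ and $(1-tx)|_D=(1-x)|_D=1-u^N$; similarly along $E'$ everything restricts to $c\,v^k(v^N-1)^l$. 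Consequently every tame symbol of $\xi$ along $D'$ or $E'$ has divisor at the nodes $D\cap E=\{u\in\mu_N\}$ proportional to the $\mu_N$-invariant cycle $\sum_{\zeta\in\mu_N}(u=\zeta)$. Since $T$ embeds $\mu_N$-equivariantly into $\bigoplus_{\zeta\in\mu_N}\Q\cdot(u=\zeta)$ by residues, and $e$ is a nontrivial character (the hypothesis $ad/N\notin\Z$ forces $d<N$), the projection of any invariant class to $T(e)$ vanishes; hence $\partial(e)(\xi)=0$ for all such $\xi$. To break the symmetry one needs functions whose zero loci single out individual nodes, such as $(y-\zeta_1(1-x))/(y-\zeta_2(1-x))$; producing such symbols in general is precisely what the paper records as open (Problem \ref{boundary-prob}), with the explicit symbols \eqref{boundary-eq1} known only for $a=1$, $b=N-1$. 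Your final de Rham step is also moot: without a symbol in hand there is nothing whose $\dlog$ or residue one can compute, so the argument is circular.

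The paper's proof avoids explicit symbols entirely, and this is the idea your proposal is missing. It works inside $H^3_{\cM,Y}(X,\Q(2))\cong\ker\bigl[\bigoplus_i\C(Y_i)^\times\otimes\Q\to\bigoplus_{P_i}\Q\cdot P_i\bigr]$ and writes down classes $\Xi(\zeta_1,\zeta_2)$ given by $(u-\zeta_1)/(u-\zeta_2)$ on $D'$ and $(v-\zeta_1^{-a/b})/(v-\zeta_2^{-a/b})$ on $E'$; here assumption \eqref{boundary-thm1-eq1} is used to make $D$ and $E$ irreducible, not for any total-ramification bookkeeping. It then shows these classes die in $H^3_\cM(X,\Q(2))\cong(\C^\times\otimes\NS(X))\otimes\Q$ — this is where the rationality of $X$ (Lemma \ref{boundary-lem1}) and the structure of $\NS(X)(e)$ (Lemma \ref{boundary-lem2}) are essential — so that, by exactness of the localization sequence, they lift (non-explicitly) to $H^2_\cM(X\setminus Y,\Q(2))$; finally their boundaries $(u=\zeta_1)-(u=\zeta_2)$ visibly span $T(e)$. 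In short, the paper trades the construction of symbols for a vanishing statement in the Néron--Severi group, and that trade is exactly what your proposal lacks.
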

I don't know whether one can remove the assumption \eqref{boundary-thm1-eq1}
in the above.

\medskip

Before the proof of Theorem \ref{boundary-thm1} we first show the following lemmas.
\begin{lem}\label{boundary-lem1}
$X$ is a rational surface (without assumption \eqref{boundary-thm1-eq1}).
\end{lem}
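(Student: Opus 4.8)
The plan is to prove rationality birationally: since $X$ is, by construction, the smooth projective total space whose general fibre is the completion of the affine curve $\{y^N = x^a(1-x)^b(1-tx)^{N-b}\}$, and since being rational is a birational invariant of a smooth projective surface over $\C$, it suffices to show that $\C(X)$ is purely transcendental over $\C$. First I would clear the parameter $t$ by the birational substitution $u := 1-tx$ (so $t=(1-u)/x$), which identifies
\[
\C(X)=\C(x,y,u),\qquad y^N = x^a(1-x)^b\,u^{N-b}.
\]
Here $x$ and $u$ are algebraically independent, so $\C(x,u)$ is already rational of transcendence degree $2$; the whole task is to split the cyclic relation defining $y$ into two rational pieces.

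The arithmetic heart is the following bookkeeping. Put $g:=\gcd(N,b)=\gcd(N,N-b)$, so $g\mid b$ and $g\mid N-b$; write $N=gp$, $N-b=gq$, $b=gb'$. Then $\gcd(p,q)=1$ by the definition of $g$, and --- crucially --- $\gcd(g,a)=\gcd(N,a,b)=1$ by the Gauss-type hypothesis. I would set
\[
\theta:=\frac{y^p}{(1-x)^{b'}u^q},
\]
so that raising to the $g$-th power and substituting the defining equation yields $\theta^g=x^a$. Because $g\mid b$ has let me absorb the entire factor $(1-x)^b$ into a $g$-th power, this intermediate relation is ramified only over $x=0,\infty$; since moreover $\gcd(g,a)=1$, the subfield $F:=\C(x,\theta)$ is rational in one variable $s$ (parametrise $x=s^g$, $\theta=s^a$, and recover $s$ as a monomial in $x,\theta$ from a Bézout relation $\alpha g+\beta a=1$).

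It remains to check that $\C(X)$ has transcendence degree one and is rational over $F$. Over $F$ the defining relation reads $y^p=c_1u^q$ with $c_1:=\theta\,(1-x)^{b'}=s^a(1-s^g)^{b'}\in F^\times$ and $\gcd(p,q)=1$. Since $(p,-q)$ is a primitive vector in $\Z^2$, I would choose integers $j,k$ with $pk+qj=1$ and set $w:=y^{j}u^{k}$; as $\{(p,-q),(j,k)\}$ is then a $\Z$-basis of $\Z^2$, both $y$ and $u$ are Laurent monomials in $w$ and the constant $c_1=y^pu^{-q}$, so $F(u,y)=F(w)$ and no root of $c_1$ need be extracted. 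Hence $\C(X)=F(w)=\C(s,w)$ is purely transcendental of transcendence degree $2$, and $X$ is rational.

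The step I expect to be the main obstacle, or at least the one requiring care, is the reduction in the second paragraph: a priori the curve $\theta^g=x^a(1-x)^b$ is a degree-$g$ cyclic cover of $\P^1$ branched over the three points $0,1,\infty$ and may have positive genus, so one cannot expect $F$ to be rational for nothing. What saves the argument is precisely that $g=\gcd(N,b)$ divides $b$, which removes the ramification at $x=1$ and reduces to a two-point cover; this, together with $\gcd(g,a)=1$, is where the hypothesis $\gcd(N,a,b)=1$ is genuinely used --- and explains why the stronger assumption $(\ast)$ is not needed here. The remaining points (that the two substitutions are birational, and the two coprimality facts $\gcd(p,q)=1$ and $\gcd(g,a)=1$) are routine.
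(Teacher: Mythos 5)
Your proof is correct, and it is essentially the paper's own argument: the paper likewise substitutes $z:=1-tx$ and rationalizes the resulting monomial equation $y_1^N=x^az_1^b$ using $\gcd(N,a,b)=1$, the only difference being that it extracts the gcds by an iterated Euclidean algorithm on exponents (successive monomial substitutions such as $z_2=z_1/y_1$), where you do the same thing in closed form via B\'ezout identities and a unimodular change of monomial variables. There is no gap.
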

\begin{proof}
Put $z:=1-tx$. Then
\[
y^N=x^a(1-x)^b(1-tx)^{N-b}\quad \Longleftrightarrow \quad
(y/z)^N=x^a(1-x)^bz^{-b}.
\]
Let $y_1:=y/z$ and $z_1:=(1-x)/z$. Then $y_1^N=x^az_1^b$.
Let $z_2:=z_1/y$ then $y_1^{N-b}=x^az_2^b$.
If $N-b>b$, let $z_3:=z_2/y$ and then $y_1^{N-2b}=x^az_3^b$.
If $N-b<b$, let $y_2:=y/z_2$ and then $y_2^{N-b}=x^az_3^{2b-N}$.
Continuing this argument, we finally have
a surface $y_0^d=x^az_0^d$, $d:=\gcd(N,b)$ which is birational to $X$.
Note $\gcd(N,a,b)=\gcd(d,a)=1$. Apply the same argument for
the variables $y_0$ and $x$, we then have a surface
$y_0^\prime=x_0z_0^d$ which is birational to $X$. Therefore $X$ is a rational surface.
\end{proof}
\begin{lem}\label{boundary-lem2}
Let $\NS(X)$ be the Neron-Severi group.
The $e$-part $\NS(X)(e)$ is generated fibral divisors and a section
of $f$ (without assumption \eqref{boundary-thm1-eq1}).
Here we say a divisor $D$ is fibral if $f(D)$ is a point.
\end{lem}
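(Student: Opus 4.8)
The plan is to realize $\NS(X)(e)$ as a piece of $H^2(X,\Q)(e)$ and then to annihilate the ``variable'' part of the latter by a monodromy count. By Lemma~\ref{boundary-lem1} $X$ is a rational surface, so $h^{2,0}(X)=0$, $H^2(X,\Q)$ is of pure Hodge type $(1,1)$, and the cycle class map is an isomorphism $\NS(X)\ot\Q\os{\cong}{\lra}H^2(X,\Q)$; in particular $\NS(X)(e)\cong H^2(X,\Q)(e)$, and since $(-)(e)=E\ot_{e,R}(-)$ is exact (it is the image of an idempotent of $R=\Q[\mu_N]$) I may compute $e$-parts freely. I would then run the Leray spectral sequence $E_2^{p,q}=H^p(\P^1,R^qf_*\Q)$ for $f$. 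On the two outer sheaves $\mu_N$ acts trivially: $R^0f_*\Q=\Q_{\P^1}$, and an automorphism of a smooth fibre fixes its fundamental class, so $\mu_N$ is trivial on the generic rank-one stalk of $R^2f_*\Q$. Hence $E_2^{2,0}(e)=H^2(\P^1,\Q)(e)=0$, while $H^0(\P^1,R^2f_*\Q)$ splits as the generic class together with skyscraper summands $\bigoplus_s\Q^{k_s-1}$ over the reducible fibres $s\in\{0,1,\infty\}$ ($k_s$ = number of components). The generic class is represented by a section (or multisection) of $f$ and the skyscraper summands by the non-identity components of the singular fibres, i.e.\ by fibral divisors; so $E_2^{0,2}$ is spanned by classes of fibral divisors and a section.

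The heart of the matter is the vanishing $E_2^{1,1}(e)=H^1(\P^1,R^1f_*\Q)(e)=0$. Writing $j:U=\P^1\setminus\{0,1,\infty\}\hra\P^1$ and $\cH:=(R^1f_*\Q)|_U$, the adjunction map $R^1f_*\Q\to j_*\cH$ is an isomorphism over $U$, so the two sheaves differ only by skyscrapers supported at $\{0,1,\infty\}$; as skyscrapers have no higher cohomology this gives $H^1(\P^1,R^1f_*\Q)\cong H^1(\P^1,j_*\cH)$. I would compute the right side via the Euler characteristic formula
\[
\chi(\P^1,j_*\cL)=\chi(U)\cdot\rank\cL+\sum_{s\in\{0,1,\infty\}}\dim\cL^{T_s}
\]
applied to $\cL=\cH(e)$. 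After extension of scalars $\cH(e)$ is a sum, over $n\in I_e$, of rank-two hypergeometric local systems with periods $F(a_n,b_n,a_n+b_n;t)$ and $F(a_n,b_n,1;1-t)$ (Lemma~\ref{Gauss-lem3}), so it suffices to treat one summand. Its local exponents are $\{0,1-a_n-b_n\}$ at $t=0$, $\{0,0\}$ at $t=1$, and $\{a_n,b_n\}$ at $t=\infty$, giving $\dim\cL^{T_s}=1,1,0$ respectively: the monodromy at $t=1$ is a single unipotent Jordan block since the fibration is totally degenerate there ($\rank N$ maximal, Definition~\ref{HG-defn}), and $T_\infty$ has no eigenvalue $1$ because $a_n,b_n\notin\Z$ (as in the proof of Lemma~\ref{Gauss-lem2}). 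With $\chi(U)=-1$ this yields $\chi(\P^1,j_*\cL)=-2+(1+1+0)=0$. Finally $H^0(\P^1,j_*\cH)(e)$ and, dually, $H^2(\P^1,j_*\cH)(e)\cong H^2_c(U,\cH(e))$ compute global (co)invariants, which vanish because $T_\infty$ fixes no vector; hence $H^1(\P^1,j_*\cH)(e)=0$.

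It then remains to assemble these facts. Taking $e$-parts is exact, so $E_\infty^{2,0}(e)$ and $E_\infty^{1,1}(e)$, being subquotients of $E_2^{2,0}(e)=0$ and $E_2^{1,1}(e)=0$, vanish. Thus in the $e$-part the Leray filtration collapses to an isomorphism $H^2(X,\Q)(e)\os{\cong}{\lra}E_\infty^{0,2}(e)\subseteq E_2^{0,2}(e)$ (surjectivity is automatic, injectivity follows since the kernel $L^1H^2(X,\Q)(e)$ is built from the two vanishing graded pieces). Since the classes of the fibral divisors and of a section surject onto $E_\infty^{0,2}(e)$, this isomorphism forces them to generate $\NS(X)(e)\cong H^2(X,\Q)(e)$; note that the hypothesis~\eqref{boundary-thm1-eq1} is never used. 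I expect the main obstacle to be the precise bookkeeping around the two outer steps—identifying $H^1(\P^1,R^1f_*\Q)$ with $H^1(\P^1,j_*\cH)$ and matching $E_2^{0,2}$ with fibral and horizontal classes—rather than the monodromy count, which is forced by the hypergeometric description and the totally degenerate reduction at $t=1$.
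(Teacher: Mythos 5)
Your core computation is the right one and is, in substance, the same count the paper makes: per rank-two hypergeometric summand, local invariants of dimension $1,1,0$ at $t=0,1,\infty$ against $\chi(S)=-1$ (with $S=\P^1\setminus\{0,1,\infty\}$), the absence of $T_\infty$-invariants killing $H^0$ and $H^2$. But your reduction to that computation has a genuine gap. You pass from $E_2^{1,1}=H^1(\P^1,R^1f_*\Q)$ to $H^1(\P^1,j_*\cH)$, $\cH:=(R^1f_*\Q)|_S$, on the grounds that the two sheaves differ only by skyscrapers and skyscrapers have no higher cohomology. That reasoning disposes only of the \emph{kernel} of the adjunction map $R^1f_*\Q\to j_*\cH$. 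The cokernel $C$ is also a skyscraper, and a skyscraper cokernel is not harmless: writing $I$ for the image, one has $H^1(\P^1,R^1f_*\Q)\cong H^1(\P^1,I)$ and an exact sequence
\[
H^0(\P^1,C)\lra H^1(\P^1,I)\lra H^1(\P^1,j_*\cH)\lra 0,
\]
so $H^1(\P^1,R^1f_*\Q)(e)$ only \emph{surjects} onto $H^1(\P^1,j_*\cH)(e)$, with kernel a quotient of $H^0(\P^1,C)(e)$. The vanishing you prove for $j_*\cH$ therefore does not by itself kill $E_2^{1,1}(e)$. What is needed is $C(e)=0$ at $t=0,1$, i.e. surjectivity of $H^1(X_s)\to H^1(X_t)^{T_s}$: this is the local invariant cycle theorem (equivalently, the decomposition-theorem splitting $R^1f_*\Q\cong j_*\cH\oplus(\mathrm{skyscraper})$). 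It holds for this projective family and could be cited, but it is a substantive theorem, not a formal consequence of the skyscraper remark.

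The paper's proof is arranged precisely to avoid this point: instead of the Leray spectral sequence of the proper map $f$, it restricts to $U=f^{-1}(S)$ and uses Deligne's weight theory — the kernel of $H^2(X)\to H^2(U)$ is spanned by the fibral classes (purity), and the image equals $W_2H^2(U)$ — then runs Leray for the \emph{smooth} map $U\to S$. This reduces the lemma to $W_2H^1(S,\cH)(e)=H^1(\P^1,j_*\cH)(e)=0$, which is proved by your very dimension count, phrased as: $\dim H^1(S,\cH)(e)=2[E:\Q]$, while the target $\bigoplus_{P}\Coker(T_P-1)(e)$ has dimension $\geq 2[E:\Q]$ and the map is onto in the $e$-part. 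The same purity device also removes the need for your other unproved assertion, that the fibral components span the skyscraper part of $E_2^{0,2}$; in your setup that is not formal either, since $[Y_{s,i}]$ restricts to $H^2(X_s)$ through the intersection matrix, and spanning requires Zariski's lemma (the fiber intersection form is negative semi-definite with radical spanned by the whole fiber). In short: right idea and the correct monodromy count, but the bridge from $R^1f_*\Q$ to $j_*\cH$ needs the local invariant cycle theorem, and the paper's route through $W_2H^2(U)$ is how one gets the lemma without invoking it.
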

\begin{proof}
Let $S:=\P^1\setminus \{0,1,\infty\}$ and $U:=f^{-1}(S)$.
Then 
\[H^2(X)/\langle\mbox{fibral divisors}\rangle\cong W_2H^2(U)\]
and there is an exact sequence
\[
0\lra H^1(S,R^2f_*\Q)\lra H^2(U,\Q)\lra H^2(X_t,\Q)\lra0.
\]
Since the last term is spanned by the image of the cycle class of a section, 
it is enough to show $W_2H^1(S,R^1f_*\Q)=0$. Let $j:S\hra \P^1$. 
Since there is an exact sequence
\[
\xymatrix{
0\ar[r]&H^1(\P^1,j_*R^1f_*\Q)\ar[r]&H^1(S,R^1f_*\Q)\ar[r]&
H^0(\P^1,R^1j_*R^1f_*\Q)\ar[r]&0\\
&W_2H^1(\P^1,R^1f_*\Q)\ar@{=}[u]
}
\]
it is enough to show that 
$H^1(S,R^1f_*\Q)(e)\to
H^0(\P^1,R^1j_*R^1f_*\Q)(e)$ is injective, or equivalently
\begin{equation}\label{boundary-lem2-eq1}
\dim H^1(S,R^1f_*\Q)(e)\leq \dim H^0(\P^1,R^1j_*R^1f_*\Q)(e).
\end{equation}
Since $H^0(S,R^1f_*\Q)(e)=0$, one has
\[
\dim H^1(S,R^1f_*\Q)(e)=-\chi(S,(R^1f_*\Q)(e))=-\chi(S,\Q)
\dim(H^1(X_t)(e))=2[E:\Q]
\]
by the second condition of HG fibration in \S \ref{HG-defn}.
On the other hand, letting $T_P$ denotes the local monodromy at $P$, 
\[
H^0(\P^1,R^1j_*R^1f_*\Q)(e)\cong 
\bigoplus_{P=0,1,\infty}\Coker[T_p-1:H^1(X_t)(e)\to H^1(X_t)(e)].
\]
By Lemma \ref{Gauss-lem3} the eigenvalues of each $T_P$ are known.
In particular both of $T_0$ and $T_1$ have eigenvalue $1$. This implies
$\dim H^0(\P^1,R^1j_*R^1f_*\Q)(e)\geq 2[E:\Q]$.
Thus \eqref{boundary-lem2-eq1} follows.
\end{proof}

We prove Theorem \ref{boundary-thm1}.
There are the localization sequences of the motivic cohomology groups
and the Deligne-Beilinson cohomology groups which sit in a commutative diagram
\[
\xymatrix{
H^2_\cM(X\setminus Y,\Q(2))\ar[r]^{\partial_\cM}\ar[d]_{\reg_{X\setminus Y}}&
H^3_{\cM,Y}(X,\Q(2))\ar[r]^i\ar[d]_{\reg_X^Y}& H^3_\cM(X,\Q(2))\ar[d]^{\reg_X}_\cong\\
H^2_\cD(X\setminus Y,\Q(2))\ar[r]&
H^3_{\cD,Y}(X,\Q(2))\ar[d]_{c_\cD}\ar[r]& H^3_\cD(X,\Q(2))\\
&H^3_Y(X,\Q(2))\cap H^{0,0}.
}
\]
where $c_\cD$ is the canonical surjective map, and
the bijectivity of $\reg_X$ follows from the fact that
$X$ is a smooth projective rational surface (Lemma \ref{boundary-lem1}).
Note, $c_B=c_\cD\circ\reg^Y_X$ is the Betti realization map, and hence
$\partial=c_\cD\circ\reg_X^Y\circ\partial_\cM$ is the boundary map as above.
Our goal is to show that there is a subspace $W\subset 
H^3_{\cM,Y}(X,\Q(2))$ such that $i(W)=0$ and
$W$ is onto $T(e)$ by $c_\cD\circ\reg_X^Y$. 
Let $Y=\sum Y_i$ be the irreducible decomposition, and $T\subset Y$ the singular locus.
Then there is the canonical isomorphism
\[
H^3_{\cM,Y}(X,\Q(2))\cong\ker\left[\bigoplus_i\C(Y_i)^\times\ot\Q\os{\mathrm{div}}{\lra}
\bigoplus_{P_i\in T}\Q\cdot P_i\right]
\]
where $\mathrm{div}$ is the map of divisor.
For $f\in \C(Y_i)^\times\ot\Q$, we denote by 
\[
[f,Y_i]\in\bigoplus_i\C(Y_i)^\times\ot\Q
\]
an element of placed in the component of $Y_i$.

To show Theorem \ref{boundary-thm1} we first describe $Y$ in detail. 
Let $\O=\C[[t-1]]$ and
\[\hat{g}:X^*:=\Spec \O[x,y]/(y^N-x^a(1-x)^b(1-tx)^{N-b})\lra\Spec \O.
\]
The surface $X^*$ has two isolated singularities $(x,y,t)=(0,0,1),(1,0,1)$.
Let $X_0\to X^*$ be the blow-up at $(x,y,t)=(1,0,1)$, and $U\subset X_0$
an affine open set such that
\[
U=\Spec\O[x,y_0,t_0]/(y^N_0-x^a(1+t_0x)^{N-b})\subset X_0
\]
where the morphism given by $y_0=y/(1-x)$ and $t_0=(1-t)/(1-x)$.
Let $D\subset X_0$ be the proper transform of the
central fiber of $\hat{f}_0$ over $t=1$, and 
$E$ the exceptional curve of the blow-up: 
\[
D\cap U=\{t_0=0,\,y^N_0=x^a\}\cong\Spec\O[x,y_1]/(y^N_0-x^a),
\]
\[
E\cap U=\{x=1,\,y^N_0=(1+t_0)^{N-b}\}\cong\Spec\O[y_0,t_0]/(y^N_0-(1+t_0)^{N-b}).
\]
By the assumption \eqref{boundary-thm1-eq1}, the curves $D$ and $E$ are irreducible.
\begin{center}
{\unitlength 0.1in%
\begin{picture}( 32.1300, 21.1600)(  2.0000,-22.2300)%
%
\special{pn 8}%
\special{pa 200 1840}%
\special{pa 232 1844}%
\special{pa 263 1848}%
\special{pa 327 1856}%
\special{pa 358 1859}%
\special{pa 422 1867}%
\special{pa 453 1871}%
\special{pa 485 1874}%
\special{pa 516 1878}%
\special{pa 548 1882}%
\special{pa 580 1885}%
\special{pa 612 1889}%
\special{pa 643 1892}%
\special{pa 675 1895}%
\special{pa 707 1899}%
\special{pa 738 1902}%
\special{pa 802 1908}%
\special{pa 834 1910}%
\special{pa 865 1913}%
\special{pa 897 1916}%
\special{pa 929 1918}%
\special{pa 961 1921}%
\special{pa 1025 1925}%
\special{pa 1056 1927}%
\special{pa 1088 1928}%
\special{pa 1120 1930}%
\special{pa 1184 1932}%
\special{pa 1216 1934}%
\special{pa 1248 1934}%
\special{pa 1312 1936}%
\special{pa 1408 1936}%
\special{pa 1441 1935}%
\special{pa 1473 1935}%
\special{pa 1569 1932}%
\special{pa 1602 1930}%
\special{pa 1666 1926}%
\special{pa 1699 1924}%
\special{pa 1763 1918}%
\special{pa 1796 1915}%
\special{pa 1828 1912}%
\special{pa 1861 1908}%
\special{pa 1893 1904}%
\special{pa 1926 1899}%
\special{pa 1959 1895}%
\special{pa 1991 1890}%
\special{pa 2024 1884}%
\special{pa 2056 1879}%
\special{pa 2089 1872}%
\special{pa 2120 1865}%
\special{pa 2151 1856}%
\special{pa 2181 1846}%
\special{pa 2209 1835}%
\special{pa 2237 1822}%
\special{pa 2262 1807}%
\special{pa 2287 1791}%
\special{pa 2310 1773}%
\special{pa 2332 1753}%
\special{pa 2352 1731}%
\special{pa 2372 1708}%
\special{pa 2391 1683}%
\special{pa 2408 1657}%
\special{pa 2425 1628}%
\special{pa 2441 1598}%
\special{pa 2456 1567}%
\special{pa 2470 1533}%
\special{pa 2484 1498}%
\special{pa 2497 1462}%
\special{pa 2509 1423}%
\special{pa 2521 1383}%
\special{pa 2532 1343}%
\special{pa 2543 1305}%
\special{pa 2552 1269}%
\special{pa 2559 1238}%
\special{pa 2565 1213}%
\special{pa 2569 1195}%
\special{pa 2571 1187}%
\special{pa 2570 1189}%
\special{pa 2568 1199}%
\special{pa 2564 1216}%
\special{pa 2553 1271}%
\special{pa 2548 1306}%
\special{pa 2542 1345}%
\special{pa 2537 1387}%
\special{pa 2532 1431}%
\special{pa 2529 1476}%
\special{pa 2527 1521}%
\special{pa 2528 1566}%
\special{pa 2531 1609}%
\special{pa 2536 1649}%
\special{pa 2545 1685}%
\special{pa 2557 1717}%
\special{pa 2572 1744}%
\special{pa 2590 1766}%
\special{pa 2610 1785}%
\special{pa 2633 1800}%
\special{pa 2657 1811}%
\special{pa 2683 1820}%
\special{pa 2711 1825}%
\special{pa 2741 1828}%
\special{pa 2772 1828}%
\special{pa 2804 1826}%
\special{pa 2838 1822}%
\special{pa 2872 1816}%
\special{pa 2907 1809}%
\special{pa 2943 1801}%
\special{pa 2980 1792}%
\special{pa 3016 1782}%
\special{pa 3053 1772}%
\special{pa 3091 1762}%
\special{pa 3128 1752}%
\special{pa 3165 1743}%
\special{pa 3201 1734}%
\special{pa 3237 1726}%
\special{pa 3273 1720}%
\special{pa 3308 1715}%
\special{pa 3341 1711}%
\special{pa 3374 1710}%
\special{pa 3390 1710}%
\special{fp}%
%
\special{pn 8}%
\special{pa 1780 110}%
\special{pa 1780 182}%
\special{pa 1779 217}%
\special{pa 1776 251}%
\special{pa 1770 284}%
\special{pa 1762 315}%
\special{pa 1751 344}%
\special{pa 1736 371}%
\special{pa 1717 395}%
\special{pa 1692 416}%
\special{pa 1663 434}%
\special{pa 1629 449}%
\special{pa 1591 461}%
\special{pa 1550 470}%
\special{pa 1507 477}%
\special{pa 1461 482}%
\special{pa 1415 486}%
\special{pa 1367 487}%
\special{pa 1320 487}%
\special{pa 1273 486}%
\special{pa 1227 484}%
\special{pa 1184 482}%
\special{pa 1142 478}%
\special{pa 1104 474}%
\special{pa 1070 471}%
\special{pa 1041 467}%
\special{pa 1016 464}%
\special{pa 997 461}%
\special{pa 984 459}%
\special{pa 979 458}%
\special{pa 981 458}%
\special{pa 991 460}%
\special{pa 1009 464}%
\special{pa 1035 469}%
\special{pa 1067 476}%
\special{pa 1102 485}%
\special{pa 1139 496}%
\special{pa 1177 509}%
\special{pa 1214 524}%
\special{pa 1249 541}%
\special{pa 1282 561}%
\special{pa 1313 581}%
\special{pa 1342 604}%
\special{pa 1367 627}%
\special{pa 1388 651}%
\special{pa 1406 676}%
\special{pa 1419 701}%
\special{pa 1427 727}%
\special{pa 1430 752}%
\special{pa 1428 778}%
\special{pa 1421 803}%
\special{pa 1409 828}%
\special{pa 1393 852}%
\special{pa 1374 877}%
\special{pa 1352 901}%
\special{pa 1327 926}%
\special{pa 1301 949}%
\special{pa 1273 973}%
\special{pa 1244 997}%
\special{pa 1214 1020}%
\special{pa 1185 1043}%
\special{pa 1156 1065}%
\special{pa 1128 1088}%
\special{pa 1101 1110}%
\special{pa 1075 1132}%
\special{pa 1050 1155}%
\special{pa 1026 1177}%
\special{pa 1003 1199}%
\special{pa 981 1221}%
\special{pa 959 1244}%
\special{pa 919 1290}%
\special{pa 899 1314}%
\special{pa 881 1338}%
\special{pa 862 1363}%
\special{pa 844 1388}%
\special{pa 827 1414}%
\special{pa 776 1495}%
\special{pa 760 1523}%
\special{pa 743 1550}%
\special{pa 709 1606}%
\special{pa 692 1633}%
\special{pa 674 1660}%
\special{pa 657 1687}%
\special{pa 639 1714}%
\special{pa 607 1770}%
\special{pa 593 1798}%
\special{pa 580 1828}%
\special{pa 569 1858}%
\special{pa 561 1889}%
\special{pa 554 1921}%
\special{pa 551 1955}%
\special{pa 550 1990}%
\special{pa 552 2027}%
\special{pa 558 2063}%
\special{pa 567 2098}%
\special{pa 578 2130}%
\special{pa 593 2160}%
\special{pa 610 2184}%
\special{pa 630 2204}%
\special{pa 653 2216}%
\special{pa 679 2222}%
\special{pa 706 2222}%
\special{pa 736 2216}%
\special{pa 766 2205}%
\special{pa 797 2190}%
\special{pa 828 2171}%
\special{pa 858 2149}%
\special{pa 888 2124}%
\special{pa 917 2098}%
\special{pa 944 2070}%
\special{pa 969 2041}%
\special{pa 991 2012}%
\special{pa 1012 1984}%
\special{pa 1030 1955}%
\special{pa 1047 1927}%
\special{pa 1062 1899}%
\special{pa 1078 1871}%
\special{pa 1093 1844}%
\special{pa 1108 1818}%
\special{pa 1124 1792}%
\special{pa 1141 1767}%
\special{pa 1160 1743}%
\special{pa 1180 1720}%
\special{pa 1203 1698}%
\special{pa 1228 1678}%
\special{pa 1255 1660}%
\special{pa 1284 1645}%
\special{pa 1314 1633}%
\special{pa 1347 1624}%
\special{pa 1380 1620}%
\special{pa 1415 1620}%
\special{pa 1451 1625}%
\special{pa 1485 1634}%
\special{pa 1516 1647}%
\special{pa 1544 1664}%
\special{pa 1567 1684}%
\special{pa 1584 1707}%
\special{pa 1594 1733}%
\special{pa 1599 1762}%
\special{pa 1599 1792}%
\special{pa 1596 1825}%
\special{pa 1591 1858}%
\special{pa 1577 1928}%
\special{pa 1572 1962}%
\special{pa 1568 1997}%
\special{pa 1568 2030}%
\special{pa 1572 2062}%
\special{pa 1581 2093}%
\special{pa 1597 2121}%
\special{pa 1618 2147}%
\special{pa 1643 2169}%
\special{pa 1672 2188}%
\special{pa 1703 2203}%
\special{pa 1736 2214}%
\special{pa 1769 2219}%
\special{pa 1802 2219}%
\special{pa 1834 2214}%
\special{pa 1865 2204}%
\special{pa 1895 2189}%
\special{pa 1922 2171}%
\special{pa 1948 2149}%
\special{pa 1972 2125}%
\special{pa 1994 2099}%
\special{pa 2013 2071}%
\special{pa 2030 2042}%
\special{pa 2045 2013}%
\special{pa 2057 1983}%
\special{pa 2067 1953}%
\special{pa 2076 1922}%
\special{pa 2084 1891}%
\special{pa 2090 1860}%
\special{pa 2096 1828}%
\special{pa 2101 1796}%
\special{pa 2107 1765}%
\special{pa 2112 1733}%
\special{pa 2118 1701}%
\special{pa 2124 1670}%
\special{pa 2131 1638}%
\special{pa 2138 1607}%
\special{pa 2144 1575}%
\special{pa 2148 1544}%
\special{pa 2150 1512}%
\special{pa 2150 1490}%
\special{fp}%
\put(19.1000,-1.8000){\makebox(0,0){$D$}}%
\put(35.1000,-17.2000){\makebox(0,0){$E$}}%
\put(25.6000,-10.9000){\makebox(0,0){$(x,y_0,t_0)=(1,0,-1)$}}%
\put(9.7000,-3.3000){\makebox(0,0){$(x,y_0,t_0)=(0,0,0)$}}%
%
\special{pn 4}%
\special{sh 1}%
\special{ar 1580 1930 16 16 0  6.28318530717959E+0000}%
\special{sh 1}%
\special{ar 1580 1930 16 16 0  6.28318530717959E+0000}%
%
\special{pn 4}%
\special{sh 1}%
\special{ar 1050 1930 16 16 0  6.28318530717959E+0000}%
\special{sh 1}%
\special{ar 1050 1930 16 16 0  6.28318530717959E+0000}%
%
\special{pn 4}%
\special{sh 1}%
\special{ar 565 1885 16 16 0  6.28318530717959E+0000}%
\special{sh 1}%
\special{ar 565 1885 16 16 0  6.28318530717959E+0000}%
%
\special{pn 4}%
\special{sh 1}%
\special{ar 2085 1870 16 16 0  6.28318530717959E+0000}%
\special{sh 1}%
\special{ar 2085 1870 16 16 0  6.28318530717959E+0000}%
\end{picture}}%
\end{center}
Let $X\to X_0$ be a desingularization, then 
the fiber over $t=1$ is $Y$.
Hence there is an embedding of the nomalization of $D\cup E$ into $Y$.
\begin{itemize}
\item $\sigma D=D$, 
$\sigma E=E$ for any automorphism $\sigma\in \mu_N$.
\item $D\cap U$ has a singular point $(x,y_0,t_0)=(0,0,0)$ unless $a=1$.
Let $\iota:D'\to D$ be the normalization, then $D'\cong \P^1$ and 
$\iota^{-1}(D\cap U)\cong\bA^1$.
\item $E\cap U$ has a singular point $(x,y_0,t_0)=(1,0,-1)$ unless $b=N-1$.
Let $\iota:E'\to E$ be the normalization, then 
$E'\cong\P^1$ and $\iota^{-1}(E\cap U)\cong \bA^1$.
\item
$D$ and $E$ intersect transversally and $D\cap E\subset U$.
Moreover $U$ is regular at each point of $D\cap E$.
\end{itemize}
We denote by $u$ and $v$ the affine coordinates of $\iota^{-1}(D\cap U)$ 
and $\iota^{-1}(E\cap U)$ 
respectively such
that
\[
(y_0,x)|_D=(u^a,u^N),\quad
(y_0,1+t_0)|_E=(v^{N-b},v^N)
\]
The intersection points of $D\cap E$ consist of $\{u=\zeta\mid \zeta\in\mu_N\}$
or $\{v=\zeta\mid \zeta\in\mu_N\}$.
A point $u=\zeta$ corresponds to $v=\zeta'$ if $\zeta^a=(\zeta')^{N-b}=(\zeta')^{-b}$.
Thinking of $D'$ and $E'$ being components of $Y=f^{-1}(1)$,
we consider elements
\[
\Xi(\zeta_1,\zeta_2):=\left[\frac{u-\zeta_1}{u-\zeta_2},D'\right]-
\left[\frac{v-\zeta_1^{-a/b}}{v-\zeta_2^{-a/b}},E'\right]\in H^3_{\cM,Y}(X,\Q(2))
\]
for $\zeta_1,\zeta_2\in\mu_N$
in the motivic cohomology supported on $Y$.
Define $W\subset H^3_{\cM,Y}(X,\Q(2))$ to be the subspace generated by 
$\Xi(\zeta_1,\zeta_2)$'s.

We first show $i(W)=0$.
Note $H^3_\cM(X,\Q(2))\cong (\C^\times\ot \NS(X))\ot\Q$ since $X$ is a 
rational surface (Lemma \ref{boundary-lem1}). Giving generators $F_n$'s of
$\NS(X)\ot\Q$ which intersect with $D'\cup E'$ properly outside the points
$u=\zeta_i$ or $v=\zeta_i^{-a/b}$,
one has
\[
i(\Xi(\zeta_1,\zeta_2))=\sum_n c_n\ot F_n,
\]
\[
c_n:=
\prod_{P\in F_n\cap D'} \left(\frac{u-\zeta_1}{u-\zeta_2}\bigg|_P\right)^{m_P}
\times
\prod_{Q\in F_n\cap E'} \left(\frac{v-\zeta_1^{-a/b}}{v-\zeta_2^{-a/b}}\bigg|_Q\right)^{-m_Q}
\in\C^\times
\]
where $m_P,m_Q$ denote the intersection numbers.
By Lemma \ref{boundary-lem2}, the $e$-part 
$\NS(X)(e)$ is generated by fibral divisors and a section. 
If $F_n$ is a section of $x=\infty$, then $P$ and $Q$ are the points defined by
$u=\infty$ and $v=\infty$ respectively. Therefore $c_n$ is torsion.
Suppose that $F_n$ is an irreducible fibral divisor which is not $D'$ or $E'$.
Then the intersection points of $D'$ and $F_n$ are at most $u=0$ or $u=\infty$.
Therefore the first term of $c_n$ is torsion. 
In the same way, the second term is also torsion, and hence so is $c_n$.
Let $F_n=E'$. Replace $E'$ with $E^{\prime\prime}=E'-\mathrm{div}(x-1)$.
Let $E^{\prime\prime}_0$ be the image of $E^{\prime\prime}$ in $X_0$.
Then any component of $E^{\prime\prime}_0$ is neither $D$ or $E$.
Moreover it intersects with $D\cap U$ (resp. $E\cap U$) 
at most at the singular point $(y_0,x)=(0,0)$ (resp. $(t_0,y_0)=(-1,0)$).
Therefore the intersection points of $E^{\prime\prime}\cap D'$ or 
$E^{\prime\prime}\cap E'$ are
at most $u=0,\infty$ or $v=0,\infty$. Hence $c_n$ is torsion.
Finally let $F_n=D'_k$. Then replace $D'_k$ with $D'_k-\mathrm{div}(t-1)$, a fibral divisor
without component $D'_k$. Hence this is reduced to the above.
This completes the proof of $i(\Xi(\zeta_1,\zeta_2))=0$, and hence $i(W)=0$.

There remains to show that $W$ is onto $T(e)$.
Let $D^*:=D'\setminus\{u^N=1\}$, and let
\[
T\cong H_{D'+E'}^3(X,\Q(2))\cap H^{0,0}\os{\subset}{\lra} H^1(D^*,\Q(1))
\os{\subset}{\lra}\bigoplus_{\zeta\in\mu_N}\Q\cdot(u=\zeta)
\]
be the composition of the Poincare residue maps.
An automorphism
$\sigma\in\mu_N$ such that $\sigma(y)=\zeta_N y$ acts on the last term
by $\sigma(u)=\zeta_N^{1/a} u$.
The above map induces an isomorphism $T(e)\cong H^1(D^*,\Q(1))(e)$ on the $e$-part.
Under this identification, one directly has
\[
\partial(\Xi(\zeta_1,\zeta_2))=(u=\zeta_1)-(u=\zeta_2).
\]
This means that $W$ is onto $H^1(D^*,\Q(1))$ 
and hence onto $T(e)\cong H^1(D^*,\Q(1))(e)$. This completes the proof. \qed

\begin{prob}\label{boundary-prob}
Find explicit descriptions of the $K_2$-symbols in $K_2(X\setminus Y)$ 
constructed in Theorem \ref{boundary-thm1}.
\end{prob}
If $f$ is a HG fibration defined by
\[
y^N=x(1-x)^{N-1}(1-tx),
\]
then one finds $K_2$-symbols
\begin{equation}\label{boundary-eq1}
\left\{\frac{y-\zeta_1(1-x)}{y-\zeta_2(1-x)},\frac{(1-x)^2}{x^2(1-t)}\right\}
\in K_2(X\setminus Y),\quad
\zeta_i\in\mu_N,
\end{equation}
and shows that their boundary span $T(e)$ (hence we don't need 
Theorem \ref{boundary-thm1} in this case).  
I don't know how to find such symbols for general $y^N=x^a(1-x)^b(1-tx)^{N-b}$.
\begin{cor}\label{boundary-cor1}
Let $f$ be a HG fibration of Gauss type as in Theorem \ref{boundary-thm1}.
Let
$\Res:\vg(X,\Omega^2_X(\log Y))\to H^\dR_1(Y)\cong H^B_1(Y,\C)$ 
be the Poincare residue map at $t=1$.
Let
\[
\dlog:H^2_\cM(X\setminus Y,\Q(2))\lra \vg(X,\Omega^2_X(\log Y))(e)
=\bigoplus_{n\in I_e}\C\cdot
\frac{dt}{t-1}\omega_n
\]
be the dlog map (see Lemma \ref{Gauss-lem2} for the right hand side).
Then the dlog map is onto a set of $2$-forms
\[
V:=\left\{\sum_{n\in I_e} \l_n\left(\frac{dt}{t-1}\omega_n\right)\mbox{ s.t. }
\sum_{n\in I_e} \l_n\Res\left(\frac{dt}{t-1}\omega_n\right)\in H^B_1(Y,\Q)(e)\right\}
\]
where $\omega_n$ and $I_e$ are as in Lemma \ref{Gauss-lem1}.
\end{cor}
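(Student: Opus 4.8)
The plan is to deduce the Corollary from Theorem \ref{boundary-thm1} by reading the Poincar\'e residue of the $\dlog$ $2$-form as the de Rham incarnation of the boundary map $\partial(e)$. I work throughout on the $e$-part.

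First I would establish the compatibility $\Res\circ\dlog=\partial(e)$. For $\xi\in H^2_\cM(X\setminus Y,\Q(2))$ the class $\dlog(\xi)$ is the de Rham realization of $\xi$, living in $F^2H^2_\dR(X\setminus Y)=\vg(X,\Omega^2_X(\log Y))$ (the logarithmic $2$-forms, since $X$ is a surface), while $\Res$ is the de Rham component of the localization boundary in the commutative diagram of realizations already used in the proof of Theorem \ref{boundary-thm1}. Because $\partial_\cM(\xi)\in H^3_{\cM,Y}(X,\Q(2))$ is a motivic class, its de Rham and Betti realizations coincide under the comparison isomorphism $H^3_Y(X,\Q(2))\ot\C\cong H^B_1(Y,\C)$; the former is $\Res(\dlog(\xi))$ and the latter is $\partial(e)(\xi)$, an element of the rational structure $(T\cap H^{0,0})(e)\cong H^B_1(Y,\Q)(e)$. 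Hence $\Res(\dlog(\xi))=\partial(e)(\xi)$ inside $H^B_1(Y,\C)(e)$.

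Next I would show that $\Res$ is injective on $\vg(X,\Omega^2_X(\log Y))(e)$. In the Gysin sequence the kernel of $\Res\colon H^2_\dR(X\setminus Y)\to H^{\dR}_1(Y)$ is the image of $H^2_\dR(X)$, so the kernel of $\Res$ on $F^2=\vg(X,\Omega^2_X(\log Y))$ is the image of $F^2H^2_\dR(X)=H^0(X,\Omega^2_X)$, which vanishes because $X$ is a rational surface (Lemma \ref{boundary-lem1}). A dimension count upgrades this to an isomorphism onto $H^{\dR}_1(Y)(e)=H^B_1(Y,\C)(e)$, both sides having $\C$-dimension $|I_e|=[E:\Q]$ (the left by Lemma \ref{Gauss-lem2}, the right since $(T\cap H^{0,0})(e)\cong E$).

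The two inclusions then follow formally. For $\mathrm{image}\subseteq V$: by the compatibility, $\Res(\dlog(\xi))=\partial(e)(\xi)$ lies in $H^B_1(Y,\Q)(e)$, so $\dlog(\xi)\in V$. For $V\subseteq\mathrm{image}$: given $\eta\in V$ one has $\Res(\eta)\in H^B_1(Y,\Q)(e)=(T\cap H^{0,0})(e)$, and the surjectivity of $\partial(e)$ (Theorem \ref{boundary-thm1}) produces $\xi$ with $\partial(e)(\xi)=\Res(\eta)$; then $\Res(\dlog(\xi))=\Res(\eta)$, and injectivity of $\Res$ forces $\dlog(\xi)=\eta$. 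The hard part will be the first step: one must verify that the Poincar\'e residue of the logarithmic $2$-form genuinely computes the de Rham part of the motivic localization boundary, and that the identification $(T\cap H^{0,0})(e)\cong H^B_1(Y,\Q)(e)$ used in Theorem \ref{boundary-thm1} is the one induced by the de Rham--Betti comparison on $H^3_Y(X)$. Once this matching of realizations is secured, the vanishing $H^0(X,\Omega^2_X)=0$ together with Theorem \ref{boundary-thm1} closes the argument.
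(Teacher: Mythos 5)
Your overall strategy can be made to work, but it is not the paper's argument, and as written it has one genuine gap. For comparison: the paper's proof is a two-line dimension count. The inclusion $\Image(\dlog)\subset V$ is taken as obvious (this is the realization compatibility you spell out in your first step); $\dim_\Q \Image(\dlog)=[E:\Q]$ follows from Theorem \ref{boundary-thm1}; and the reverse inclusion is then obtained from the bound $\dim_\Q V\leq [E:\Q]$, which is immediate from the criterion \eqref{boundary-cor1-eq1}: the residues $\Res\bigl(\frac{dt}{t-1}\omega_n\bigr)$ are combinations of the classes of the double points of $Y$ with coefficients $\zeta^n$, so rationality forces $\sum_{n}\l_n\zeta^n\in\Q$ for all $\zeta\in\mu_N$, and since the vectors $(\zeta^n)_{\zeta\in\mu_N}$, $n\in I_e$, are $\C$-linearly independent, this condition cuts out a $\Q$-space of dimension at most $|I_e|=[E:\Q]$. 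You replace this counting by proving injectivity of $\Res$ on $\vg(X,\Omega^2_X(\log Y))(e)$ via strictness and $F^2H^2_\dR(X)=H^0(X,\Omega^2_X)=0$ (Lemma \ref{boundary-lem1}), and then lifting each $\eta\in V$ individually through the surjectivity of $\partial(e)$; the injectivity argument is correct and is a legitimate substitute for the paper's explicit residue computation.

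The gap is the asserted equality $H^B_1(Y,\Q)(e)=(T\cap H^{0,0})(e)$, on which your step 4 (and the dimension claim in your step 3) depends, and which you describe as ``the identification used in Theorem \ref{boundary-thm1}''. The paper neither uses nor proves such an identification: what it establishes is only $T\cap H^{0,0}=H^3_Y(X,\Q(2))\cap H^{0,0}$, i.e.\ agreement of the \emph{rational classes of Hodge type $(0,0)$}. A priori $H^3_Y(X,\Q(2))(e)\cong H^B_1(Y,\Q)(e)$ could be strictly larger than $T(e)$ --- rational classes of weight $-1$, coming from $H_1$ of non-rational components of $Y$, are rational without being of type $(0,0)$ --- and for such a class $\Res(\eta)$ the surjectivity of $\partial(e)$ onto $T(e)$ produces nothing. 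The repair is short but it is a missing step, not bookkeeping: under the Betti--de Rham comparison, $\Res$ is the connecting map of the localization sequence, hence a morphism of mixed Hodge structures, so $\Res(\eta)\in F^0\bigl(H^3_Y(X,\C(2))\bigr)$; a rational class lying in $F^0$ of a mixed Hodge structure all of whose weights are $\leq 0$ is of type $(0,0)$, hence $\Res(\eta)\in H^3_Y(X,\Q(2))\cap H^{0,0}=T\cap H^{0,0}$, whose $e$-part is $T(e)$, and Theorem \ref{boundary-thm1} then applies. (Note also that your step 3 is superfluous: only injectivity of $\Res$, not surjectivity onto $H^B_1(Y,\C)(e)$, enters your step 4, which is fortunate since the surjectivity claim rests on the same unproved identification.)
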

We note 
\begin{equation}\label{boundary-cor1-eq1}
\sum_{n\in I_e} \l_n\Res\left(\frac{dt}{t-1}\omega_n\right)\in H^B_1(Y,\Q)(e)
\quad\Longleftrightarrow\quad \sum_{n\in I_e}\l_n\zeta^n\in\Q,\,\forall\zeta\in\mu_N.
\end{equation}
\begin{proof}
Obviously  $\Image(\dlog)\subset V$. Since $\dim_\Q\Image(\dlog)=[E:\Q]$ 
by Theorem \ref{boundary-thm1}, it is enough to show $\dim_\Q V\leq [E:\Q]$.
However this is immediate from \eqref{boundary-cor1-eq1}.
\end{proof}
\subsection{Main Theorem}\label{m-gauss-sect2}
Let $f$ be a HG fibration of Gauss type 
\[
X_t=f^{-1}(t):y^N=x^a(1-x)^b(1-tx)^{N-b},\quad 1\leq a,b<N,\,\gcd(N,a,b)=1
\]
with multiplication by $(\Q[\mu_N],e)$ such that the projection $e:\Q[\mu_N]\to E$
satisfies $ad/N,bd/N\not\in\Z$, $d:=\sharp\ker(e:\mu_N\to E^\times)$.
Let $\xi\in H^2_\cM(X\setminus Y,\Q(2))(e)$ be an element of the $e$-part, and
let 
\begin{equation}\label{m-gauss-sect1-eq1}
\dlog(\xi)=\sum_{n\in I_e} \l_n\left(\frac{dt}{t-1}\omega_n\right).
\end{equation}
Note $\l_n$'s satisfy the condition \eqref{boundary-cor1-eq1}.
Conversely
if $\gcd(N,a)=\gcd(N,b)=1$, then it follows from Theorem \ref{boundary-thm1} that,
for any $\l_n$'s satisfying \eqref{boundary-cor1-eq1} there exists $\xi$ such that
\eqref{m-gauss-sect1-eq1} holds. 
\begin{thm}\label{m-gauss-thm1}
Suppose $a\ne b$. 
Let $\gamma_0=(1-\sigma)u_0$
and $\gamma_1=(1-\sigma)u_1$ be the homology cycles as in
Lemma \ref{Gauss-lem3}. Write $a_n:=\{an/N\}$, $b_n:=\{bn/N\}$, $z:=(1-t)^{-1}$ and 
\[
{}_4F^{(n)}_3(t):={}_4F_3\left({a_n+1,b_n+1,1,1\atop 2,2,2};t\right).
\]
Then
\begin{align*}
&\frac{1}{2\pi\sqrt{-1}}\langle\reg(\xi)\mid \gamma_1\rangle\\
&=\sum_{n\in I_e} (1-\zeta_N^n) \l_n
[2\psi(1)-\psi(a_n)-\psi(b_n)-\log(1-t)
-a_nb_n(1-t){}_4F_3^{(n)}(1-t)]\\
&=\sum_{n\in I_e} (1-\zeta_N^n) \l_n[a_n^{-1}C_{a_n,b_n}(-z)^{a_n}\,F_{a_n,b_n}(z)
+b_n^{-1}C_{b_n,a_n}(-z)^{b_n}\,F_{b_n,a_n}(z)]
\end{align*}
and
\[
\langle\reg(\xi)\mid \gamma_0\rangle
=\sum_{n\in I_e}(1-\zeta_N^n) \l_n[a_n^{-1}B_{a_n,b_n}z^{a_n}\,F_{a_n,b_n}(z)
+b_n^{-1}B_{b_n,a_n}z^{b_n}\,F_{b_n,a_n}(z)],
\]
where $B_{a,b}$, $C_{a,b}$ and $F_{a,b}(z)$ are 
as in Theorems \ref{m-fermat-thm2} and \ref{m-fermat-thm3}.
\end{thm}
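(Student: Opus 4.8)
The plan is to mirror the Fermat computations of Theorems \ref{m-fermat-thm1}, \ref{m-fermat-thm2} and \ref{m-fermat-thm3}, feeding in the Gauss periods of Lemma \ref{Gauss-lem3} in place of the Fermat ones. First I set $F:=\frac{1}{2\pi\sqrt{-1}}\langle\reg(\xi)\mid\gamma_1\rangle$ and apply Proposition \ref{m-fermat-prop1} to the prescribed $\dlog(\xi)=\sum_{n\in I_e}\lambda_n\frac{dt}{t-1}\omega_n$. This yields the first-order relation $(t-1)\frac{dF}{dt}=\frac{1}{2\pi\sqrt{-1}}\sum_{n\in I_e}\lambda_n\int_{\gamma_1}\omega_n$, and likewise for $\langle\reg(\xi)\mid\gamma_0\rangle$. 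Since each $\omega_n$ is an eigenform for $\sigma$ and $\gamma_i=(1-\sigma)u_i$, Lemma \ref{Gauss-lem3} shows that $\int_{\gamma_1}\omega_n$ is the factor $(1-\zeta_N^n)$ times $2\pi\sqrt{-1}\,F(a_n,b_n,1;1-t)$, while $\int_{\gamma_0}\omega_n$ is $(1-\zeta_N^n)$ times $B(a_n,b_n)F(a_n,b_n,a_n+b_n;t)$. This converts both regulator pairings into explicit ordinary differential equations in $t$.

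Next I would integrate. For $\gamma_1$ the resulting equation is of the same shape as \eqref{m-fermat-thm1-eq3}, and its termwise integration reproduces, exactly as in the proof of Theorem \ref{m-fermat-thm1} together with Remark \ref{m-fermat-rem1}, the non-constant part $-\log(1-t)-a_nb_n(1-t)\,{}_4F_3^{(n)}(1-t)$, up to an additive constant. For $\gamma_0$ the period is holomorphic at $t=0$, and I would pass to $z=(1-t)^{-1}$ through the connection formula \eqref{branch-eq2} (\cite{NIST} 15.8.3); this is legitimate precisely because the hypothesis $a\neq b$ forces $a_n\neq b_n$, and integrating the resulting $z^{a_n}$- and $z^{b_n}$-series produces the form $a_n^{-1}B_{a_n,b_n}z^{a_n}F_{a_n,b_n}(z)+b_n^{-1}B_{b_n,a_n}z^{b_n}F_{b_n,a_n}(z)$ as in Theorem \ref{m-fermat-thm2}. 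The passage between the first and second expressions for $\gamma_1$ is then a termwise application of Zudilin's Lemma \ref{m-fermat-thm4}; the residual $\pi\sqrt{-1}$ in that identity contributes $\pi\sqrt{-1}\sum_n\lambda_n(1-\zeta_N^n)$, and this lies in $\Q(1)$ because $\sum_n\lambda_n(1-\zeta_N^n)\in\Q$ by the rationality condition \eqref{boundary-cor1-eq1} that $\dlog(\xi)$ satisfies. Thus the $\pi\sqrt{-1}$ term is harmlessly absorbed, and it is exactly here that the constraint on the $\lambda_n$ enters.

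The main obstacle is pinning down the constants of integration. Unlike in the Fermat case, the class $\xi$ is here only known to exist by Theorem \ref{boundary-thm1} and is not given by an explicit symbol (Problem \ref{boundary-prob}), so the constant cannot be evaluated by inserting Beilinson's explicit formula and taking the limit $t\to1$. Instead I would determine it through the local monodromy $T_\infty$ at $t=\infty$, just as in the proof of Theorem \ref{m-fermat-thm2}: once each answer is written in the $z^{a_n}$/$z^{b_n}$ basis (respectively $(-z)^{a_n}$/$(-z)^{b_n}$ for $\gamma_1$), the functions $z^{a_n}F_{a_n,b_n}(z)$ and $z^{b_n}F_{b_n,a_n}(z)$ are eigenvectors of $T_\infty$ with eigenvalues $p_n=\exp(2\pi\sqrt{-1}a_n)$ and $q_n=\exp(2\pi\sqrt{-1}b_n)$, so the operator $Q=\prod_{n\in I_e}(T_\infty-p_n)(T_\infty-q_n)$ annihilates the entire hypergeometric part. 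Applying $Q$ to the extension-class description of the pairing (as in the proofs of Proposition \ref{m-fermat-prop1} and Theorem \ref{m-fermat-thm2}) gives $Q\langle\reg(\xi)\mid\gamma_0\rangle\in\Q(2)$, so the integration constant, multiplied by the nonzero factor $\prod_{n\in I_e}(1-p_n)(1-q_n)$ (nonzero since $a_n,b_n\notin\Z$), lies in $\Q(2)$; this forces the constant itself into $\Q(2)$, and the same argument places the $\gamma_1$-constant in $\Q(1)$ after division by $2\pi\sqrt{-1}$. This step simultaneously shows the answer is independent of the choice of $\xi$ with the prescribed $\dlog$, completing the proof of all three identities.
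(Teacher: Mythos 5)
Your proposal is correct and is essentially the paper's own proof: the paper disposes of this theorem with the single line ``the same proof as Theorems \ref{m-fermat-thm1}, \ref{m-fermat-thm3} and \ref{m-fermat-thm2},'' and your argument is a faithful unpacking of exactly that --- the ODE from Proposition \ref{m-fermat-prop1}, the periods of Lemma \ref{Gauss-lem3} with the factor $(1-\zeta_N^n)$ coming from $\gamma_i=(1-\sigma)u_i$, the connection formula \eqref{branch-eq2}, and Zudilin's Lemma \ref{m-fermat-thm4}. In particular you correctly supplied the one adaptation the terse citation leaves implicit: since no explicit symbol is available (Problem \ref{boundary-prob}), the integration constants must be pinned down by the $T_\infty$-monodromy argument of Theorem \ref{m-fermat-thm2} rather than the limit-at-$t=1$ Beilinson-formula computation of Theorem \ref{m-fermat-thm1}, with the rationality condition \eqref{boundary-cor1-eq1} absorbing the residual $\pi\sqrt{-1}$ from Zudilin's lemma modulo $\Q(1)$ --- which is exactly what justifies the paper's remark that the theorem is proven without knowledge of the explicit $K_2$-symbol.
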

\begin{proof}
The same proof as Theorems \ref{m-fermat-thm1},
\ref{m-fermat-thm3} and \ref{m-fermat-thm2}.
\end{proof}
It is worth noting that Theorem \ref{m-gauss-thm1} is proven without knowledge
of explicit description of the $K_2$-symbol $\xi$ (cf. Problem \ref{boundary-prob}).

\begin{conj}\label{m-gauss-thm2}
The first equality in Theorem \ref{m-gauss-thm1} is valid even when $a=b$.
\end{conj}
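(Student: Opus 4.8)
The plan is to keep the entire differential-equation skeleton of the proof of Theorem \ref{m-gauss-thm1}, which never uses $a\ne b$, and to replace only the final step---the determination of the integration constant---by a confluent version of the two ingredients that currently require $a\ne b$. First I would note that Proposition \ref{m-fermat-prop1} applied to $\dlog(\xi)=\sum_{n\in I_e}\l_n\frac{dt}{t-1}\omega_n$, together with $\int_{\gamma_1}\omega_n=(1-\zeta_N^n)\,2\pi\sqrt{-1}\,F(a_n,b_n,1;1-t)$ from Lemma \ref{Gauss-lem3}, gives
\[
(t-1)\frac{d}{dt}\Bigl(\frac{1}{2\pi\sqrt{-1}}\langle\reg(\xi)\mid\gamma_1\rangle\Bigr)
=\sum_{n\in I_e}\l_n(1-\zeta_N^n)\,F(a_n,b_n,1;1-t),
\]
an identity valid verbatim when $a=b$. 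Since the matching power-series identity $(t-1)\frac{d}{dt}\bigl[-\log(1-t)-a_nb_n(1-t)\,{}_4F_3^{(n)}(1-t)\bigr]=F(a_n,b_n,1;1-t)$ holds at $a_n=b_n$ as well, integration already yields the first-equality formula \emph{up to a single integration constant}. Thus the conjecture is equivalent to the assertion that this constant equals $\sum_{n\in I_e}\l_n(1-\zeta_N^n)\bigl(2\psi(1)-2\psi(a_n)\bigr)$ when $a=b$.

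I would next isolate precisely where $a\ne b$ enters the determination of that constant. For $a\ne b$ it is fixed by combining the monodromy argument at $t=\infty$ of Theorem \ref{m-fermat-thm2}---which forces the regulator to agree modulo $\Q(2)$ with the covariant solution $a_n^{-1}C_{a_n,b_n}(-z)^{a_n}F_{a_n,b_n}(z)+b_n^{-1}C_{b_n,a_n}(-z)^{b_n}F_{b_n,a_n}(z)$ annihilated by $(T_\infty-p_n)(T_\infty-q_n)$, where $p_n=e^{2\pi\sqrt{-1}a_n}$ and $q_n=e^{2\pi\sqrt{-1}b_n}$---with Zudilin's \emph{exact} Lemma \ref{m-fermat-thm4}, which rewrites that solution as the ${}_4F_3$-plus-digamma form. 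At $a=b$ both inputs degenerate simultaneously: the eigenvalues collide ($p_n=q_n$), the connection formula \eqref{branch-eq2} loses one solution to a logarithmic one, and the coefficients $C_{a_n,b_n},C_{b_n,a_n}$ develop simple poles of opposite residue.

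The proposal is to run both inputs in their confluent form. On the analytic side I would take the limit $b\to a$ in Lemma \ref{m-fermat-thm4}: the left-hand (${}_4F_3$-plus-digamma) side is analytic in the parameters, while on the right the two polar terms combine by L'Hôpital into a single confluent expression involving $\log(-z)$ and a parameter-derivative $\partial_c\,{}_3F_2|_{c=1}$. Crucially, the finite parts of the expansions $\Gamma(\pm\epsilon)=\pm\epsilon^{-1}-\gamma+\cdots$ and $\Gamma(a\pm\epsilon)=\Gamma(a)(1\pm\psi(a)\epsilon+\cdots)$, with $b=a+\epsilon$, produce exactly the constant $2\psi(1)-2\psi(a)$ (using $\psi(1)=-\gamma$); this yields an exact identity at $a=b$ equating the confluent $z$-expression with the ${}_4F_3$-plus-digamma form, up to the term $\pi\sqrt{-1}\in\Q(1)$. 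On the geometric side I would re-run the monodromy argument with the annihilator $Q=\prod_{n}(T_\infty-p_n)^2$, which still kills the two-dimensional local solution space of the now size-two Jordan block at $t=\infty$; since $p_n\ne1$, this forces the regulator to agree modulo $\Q(2)$ with the same confluent $z$-expression. Combining the two and dividing by $2\pi\sqrt{-1}$ gives the first equality modulo $\Q(1)$.

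The main obstacle will be matching the two confluent computations precisely, rather than merely up to an undetermined element of $\Q(2)$. Concretely, one must verify that the confluent closed form produced by the $b\to a$ limit of Zudilin's lemma---logarithmic term, $\partial_c\,{}_3F_2|_{c=1}$ term, and branch of $\log(-z)$ included---is genuinely the covariant solution annihilated by $(T_\infty-p_n)^2$, so that the monodromy argument isolates it exactly and the transcendental constant $2\psi(1)-2\psi(a_n)$ is not swallowed by the $\Q(2)$-ambiguity. Since $2\psi(1)-2\psi(a_n)$ is real while $\Q(1)$ is purely imaginary, any bookkeeping that pins down the real part modulo $\Q(1)$ suffices; the delicate point is the interplay, via the reflection relation $\psi(1-a)-\psi(a)=\pi\cot(\pi a)$, between these digamma values and the $\sin(\pi a)$ factors hidden in $C_{a_n,b_n}$, together with the control of the logarithmic monodromy of the Jordan block. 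I expect this confluent bookkeeping, and the verification that it reproduces the size-two extension datum of the limiting mixed Hodge structure at $t=\infty$, to be where the real work lies.
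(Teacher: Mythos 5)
The first thing you should know is that the paper contains no proof of this statement: it is deliberately stated as Conjecture \ref{m-gauss-thm2} and left open. The author proves Theorem \ref{m-gauss-thm1} only under the hypothesis $a\ne b$ precisely because the two ingredients beyond the differential-equation step degenerate at $a=b$: the connection formula \eqref{branch-eq2} and Zudilin's Lemma \ref{m-fermat-thm4} have coefficients $B_{a,b}$, $C_{a,b}$ with poles at $b=a$, and the monodromy argument at $t=\infty$ as written uses the product of the factors $(T_\infty-p_n)(T_\infty-q_n)$ attached to two distinct exponents. So there is no paper argument to compare yours against; you are attacking an open problem.

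That said, your diagnosis of where $a\ne b$ enters is exactly right, and your repair is the natural confluent degeneration of the paper's own method, with no visible obstruction of principle: the ODE step via Proposition \ref{m-fermat-prop1} and Lemma \ref{Gauss-lem3} is insensitive to $a=b$; the $b\to a$ limit of Lemma \ref{m-fermat-thm4} exists because its ${}_4F_3$-plus-digamma side is regular in the parameters; the limiting function is of the form $(-z)^{a_n}\bigl(h_0(z)+h_1(z)\log(-z)\bigr)$ with $h_0,h_1$ holomorphic at $z=0$, hence killed by $(T_\infty-p_n)^2$; and $Q=\prod_{n\in I_e}(T_\infty-p_n)^2$ still has rational coefficients and multiplies constants by $\prod_{n\in I_e}(1-p_n)^2$, a nonzero rational number, because $I_e$ is a Galois orbit and $p_n\ne1$ --- so the extension-class argument pins the constant modulo $\Q(2)$ just as before. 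The genuine gap is that this is a plan, not a proof: the decisive step --- actually carrying out the confluent limit of Zudilin's identity with the correct branch of $\log(-z)$, justifying the interchange of the limit $b\to a$ with parameter-differentiation of the ${}_3F_2$ series, and verifying that the resulting expression is precisely the covariant solution isolated by the monodromy argument so that the constant $2\psi(1)-2\psi(a_n)$ survives modulo $\Q(1)$ --- is never executed, and you yourself defer it as ``where the real work lies.'' That deferred computation \emph{is} the content of the conjecture; until it is written down and checked, what you have is a credible strategy for settling an open question, not a proof of it.
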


\section{Real regulators of $K_2$ of Elliptic fibrations and the Beilinson conjecture}
\label{bei-sect}
For an elliptic curve $E$ over $\R$ we discuss the {\it real regulator map}
\[
\reg_\R:H^2_\cM(E,\Z(2))\lra \R
\]
which is defined in the following way.
Let $F_\infty:E(\C)\to E(\C)$ be the infinite Frobenius map of real manifold.
We denote by ``$F_\infty=1$'' (resp. ``$F_\infty=-1$'') the fixed part
(resp. anti-fixed part). Then $\reg_\R$ is defined as the composition 
\begin{align*}
H^2_\cM(E,\Z(2))&\os{\reg}{\lra} 
\Hom(H_1^B(E(\C),\Z)^{F_\infty=-1},\C/\Z(2))\\
&\lra\Hom(H_1^B(E(\C),\Q)^{F_\infty=-1},\R(1))\\
&\os{\cong}{\lra}\R(1)\\
&\os{\cong}{\lra}\R
\end{align*}
where the 2nd arrow is given by 
the projection $\C/\Z(2)\to\R(1)=\sqrt{-1}\R$, the 3rd given by a
fixed base of $H_1^B(E(\C),\Q)^{F_\infty=-1}
\cong \Q$, and the 4th arrow given by multiplication by $(2\pi\sqrt{-1})^{-1}$.
\begin{conj}[Beilinson conjecture for an elliptic curve over $\Q$, 
cf.\,\cite{schneider}, \cite{DW}]
\label{bei-conj}
Let $E$ be an elliptic curve over $\Q$, and
$L(E,s)$ the motivic $L$-function of $E$.
Then there is an integral element $\xi\in H^2_\cM(E,\Z(2))$ in the sense of Scholl \cite{Scholl}
such that
\[
\reg_\R(\xi)\sim_{\Q^\times}\pi^{-2}L(E,2)
\]
where $x\sim_{\Q^\times}y$ means $xy\ne0$ and $xy^{-1}\in \Q^\times$.
\end{conj}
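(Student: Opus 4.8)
Since the statement is an instance of Beilinson's general conjecture and is open for an arbitrary elliptic curve, the plan is not to prove it in full but to establish it unconditionally for those $E$ realizable as a smooth fiber of an elliptic HG fibration, where the explicit regulator formulas obtained in the manner of Theorems \ref{m-fermat-thm1}--\ref{m-gauss-thm1} can be combined with the Rogers--Zudilin identities \cite{RZ}. Concretely, the realistic target is the conductor $24$ curve singled out in the introduction; the numerical verifications for the other curves would proceed identically but without the analytic input that actually closes the argument.

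First I would fix an elliptic HG fibration $f\colon X\to\P^1$ (for instance the Legendre family, an $N=2$ Gauss-type fibration) and realize $E$ as the fiber $E=X_{t_0}$ at an algebraic point $t_0\in\P^1(\ol\Q)$ chosen so that $E$ acquires the prescribed conductor; a CM value of $t_0$ is the natural candidate. On $X\setminus f^{-1}(1)$ I would construct an explicit $K_2$-symbol $\xi=\{g_1,g_2\}$ of the shape used in \eqref{m-fermat-eq1} and \eqref{boundary-eq1}, arranged so that its boundary at $t=1$ is nonzero; by the boundary analysis behind Theorem \ref{boundary-thm1} this guarantees that $\xi|_E$ is a nontorsion class in $K_2(E)\ot\Q$.

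Next I would compute the real regulator. By Proposition \ref{m-fermat-prop1}, $\frac{d}{dt}\langle\reg(\xi|_{X_t})\mid\gamma_t\rangle=\int_{\gamma_t}\omega$ with $\dlog(\xi)=\frac{dt}{t-1}\omega$, and integrating as in Theorems \ref{m-fermat-thm1} and \ref{m-fermat-thm3} expresses the pairing as a linear combination of the values ${}_4F_3$, equivalently ${}_3F_2$, in the variable $1-t$ or $z=(1-t)^{-1}$. Specializing to $t=t_0$ and projecting onto the $F_\infty=-1$ part gives $\reg_\R(\xi|_E)$ as $\pi^{-1}$ times an explicit hypergeometric special value at $t_0$.

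Finally I would feed in the Rogers--Zudilin formulas \cite{RZ}, which for the relevant $E$ express $L(E,2)$ as a rational multiple of $\pi$ times a hypergeometric special value of exactly this type; matching the two expressions yields $\reg_\R(\xi|_E)\sim_{\Q^\times}\pi^{-2}L(E,2)$. The obstacle is twofold. The analytic obstacle is that the matching only closes when a Rogers--Zudilin identity is available and the hypergeometric arguments on the two sides coincide, which pins down $t_0$ and restricts the argument to special conductors; for a general $E$ this equality of hypergeometric values is precisely the content that remains conjectural. The arithmetic obstacle, which I expect to be the genuinely delicate and case-specific step, is verifying that $\xi$ is \emph{integral} in Scholl's sense \cite{Scholl}: one must control the constructed symbol on a regular model of $E$ over $\Z$ at the bad primes, so that $\xi$ represents a class in $H^2_\cM(E,\Z(2))$ and not merely in the group with $\Q$-coefficients.
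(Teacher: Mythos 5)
This statement is a conjecture, and the paper gives no general proof; what it actually establishes is exactly the special case you target, by the same chain you outline: the explicit regulator formula of Theorem \ref{bei-thm1} for the symbol \eqref{bei-thm1-eq0} on the Legendre family, matched against the Rogers--Zudilin evaluation $\frac{\pi^2}{12}\,{}_3F_2\left({\frac12,\frac12,\frac12\atop \frac32,1};\frac14\right)=L(E_{24},2)$ of \cite{RZ} at $t_0=-3$ (so $z=(1-t_0)^{-1}=\frac14$), plus an integrality check, yielding the conjecture for $X_{-3}$ and only numerical evidence for the other listed $t$. Two small corrections to your plan: $t_0$ is pinned down purely by matching the hypergeometric argument in the Rogers--Zudilin identity, not by CM (the conductor-$24$ curve $X_{-3}$ has no CM), and the paper settles Scholl integrality \cite{Scholl} not by a bespoke regular-model analysis (nor does nontriviality come from the boundary map at $t=1$, which concerns the family rather than the fiber --- nonvanishing is read off the manifestly positive ${}_3F_2$ value) but by the simple sufficient criterion that $\xi_t$ is integral whenever $\ord_p(j(X_t))\geq 0$ at every prime $p$ with $\ord_p(1-t)\neq 0$, a condition whose solution list includes $t=-3$.
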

Beilinson further conjectures that the space $H^2_\cM(E,\Q(2))_\Z$ of
integral elements is 1-dimensional, spanned by $\xi$ in the above,
though we will not discuss this issue.

\subsection{}\label{bei-sect1}
Let $f:X\to \P^1$ be the Legendre family of elliptic curves given by an affine equation
\[
X_t=f^{-1}(t):y^2=x(1-x)(1-tx).
\]
Consider a $K_2$-symbol
\begin{equation}\label{bei-thm1-eq0}
\xi:=\left\{\frac{y-x+1}{y+x-1},\frac{(x-1)^2}{x^2(t-1)}
\right\}\in K_2(X\setminus Y),\quad Y:=f^{-1}(1).
\end{equation}
One immediately has
\[
\dlog\xi=\frac{dx}{y}\frac{dt}{t-1}.
\]
\begin{thm}\label{bei-thm1}
Write $\xi_t:=\xi|_{X_t}\in K_2(X_t)$ for $t\in\R\setminus\{0,1\}$.
\begin{enumerate}
\item[$(1)$]
If $t>0$, then
\[
\reg_\R(\xi_t)=\mathrm{Re}\left[-\log16+\log(1-t)+ 
\frac{1-t}{4}{}_4F_3\left({\frac{3}{2},\frac{3}{2},1,1\atop
2,2,2};1-t\right)\right].
\]
\item[$(2)$]
If $t<0$, then
\[
\reg_\R(\xi_t)=z^{\frac{1}{2}}{}_3F_2\left({\frac{1}{2},\frac{1}{2},\frac{1}{2}\atop
1,\frac{3}{2}};z\right),\quad z:=(1-t)^{-1}.
\]
\end{enumerate}
\end{thm}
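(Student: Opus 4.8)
The plan is to treat the statement as the degenerate case $N=2$, $a=b=1$ of the Gauss-type machinery of \S\ref{m-gauss-sect}: here $a_1=b_1=\tfrac12$, $I_e=\{1\}$, $\zeta_N=-1$, $\omega_1=dx/y$, and the hypothesis $\dlog\xi=\frac{dx}{y}\frac{dt}{t-1}$ says $\l_1=1$ in the notation of \eqref{m-gauss-sect1-eq1}. Because $a=b$, Theorem \ref{m-gauss-thm1} does not apply verbatim, so the two cases must be obtained by hand; case (1) will in fact be the $a=b$ instance of the first equality of that theorem (cf.\ Conjecture \ref{m-gauss-thm2}), while case (2) handles the genuinely degenerate analytic continuation.

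First I would unwind the definition of $\reg_\R$ in \S\ref{bei-sect} to the complex pairing: since the projection $\C/\Z(2)\to\R(1)$ is exactly ``imaginary part'' and the $\Q(1)$-ambiguity of the complex computation is purely imaginary (hence killed by $\mathrm{Re}$), one gets $\reg_\R(\xi_t)=\mathrm{Re}\bigl(\tfrac{1}{2\pi\sqrt{-1}}\langle\reg(\xi_t)\mid\gamma^-\rangle\bigr)$ up to the rational normalization fixed by the chosen generator $\gamma^-$ of the one-dimensional space $H_1^B(X_t(\C),\Q)^{F_\infty=-1}$. The decisive geometric input is to pin down $\gamma^-$, the cycle with purely imaginary period, from the real branch-point configuration of $y^2=x(1-x)(1-tx)$ (roots $0,1,1/t$): for $0<t<1$ the ordering is $0<1<1/t$ and $\gamma^-$ is the vanishing cycle at $t=1$ encircling $[1,1/t]$, i.e.\ $\gamma^-\propto\gamma_1=(1-\sigma)u_1$ of Lemma \ref{Gauss-lem3}, with period $\propto 2\pi\sqrt{-1}\,F(\tfrac12,\tfrac12,1;1-t)$; for $t<0$ the ordering is $1/t<0<1$ and $\gamma^-$ is instead the vanishing cycle at $t=\infty$ encircling $[1/t,0]$.

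For case (1), Proposition \ref{m-fermat-prop1} together with Lemma \ref{Gauss-lem3} gives the differential equation $\frac{dF}{dt}=\frac{c}{t-1}F(\tfrac12,\tfrac12,1;1-t)$ for $F=\langle\reg(\xi_t)\mid\gamma^-\rangle$, which I would integrate termwise using the $a=b=\tfrac12$ specialization of the relation underlying Lemma \ref{m-fermat-thm4},
\[
\frac{d}{dt}\left[-\log(1-t)-\tfrac{1}{4}(1-t)\,{}_4F_3\left({\tfrac{3}{2},\tfrac{3}{2},1,1\atop 2,2,2};1-t\right)\right]=\frac{1}{t-1}F\left(\tfrac{1}{2},\tfrac{1}{2},1;1-t\right);
\]
crucially this differential identity survives at $a=b$, even though the full connection formula does not. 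The integration constant is then fixed exactly as in the proof of Theorem \ref{m-fermat-thm1}, by evaluating Beilinson's explicit formula on the small loop $\gamma^-$ as $t\to1^-$; this yields $2\psi(\tfrac12)-2\psi(1)=-\log16$, and taking $\mathrm{Re}$ gives (1).

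Case (2) is the crux. Here the two-term formula \eqref{branch-eq2} of Theorem \ref{m-fermat-thm2} is singular at $a=b$, since $B_{a,b}=B(a,b-a)$ carries a pole in $\Gamma(b-a)$; so I would avoid it and work directly with the vanishing cycle at $t=\infty$. Although $t=\infty$ is a logarithmic point (both exponents equal $\tfrac12$), this cycle lies in the $(-1)$-eigenspace of $T_\infty$ and so selects the \emph{log-free} local solution $\propto(-z)^{1/2}F(\tfrac12,\tfrac12,1;z)$, $z=(1-t)^{-1}$; integrating $\frac{dF}{dt}=\frac{1}{t-1}\int_{\gamma^-}\frac{dx}{y}$ then produces a single $z^{1/2}F_{1/2,1/2}(z)=z^{1/2}{}_3F_2(\tfrac12,\tfrac12,\tfrac12;1,\tfrac32;z)$ with no surviving logarithm, the factors of $\sqrt{-1}$ conspiring to make the answer real. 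The hard part will be fixing the integration constant, i.e.\ showing it is $0$: I would use the limit $t\to-\infty$ (equivalently $z\to0^+$), where the right-hand side vanishes, and verify $\reg_\R(\xi_t)\to0$ as the cycle $[1/t,0]$ shrinks, via the boundary analysis of \S\ref{K2-sect} and the Beilinson integral for $\xi$; alternatively one matches the constant to case (1) through the local monodromy $T_0$ at the singular value $t=0$. The same limit, handled carelessly through the confluent limit $b\to a$ of \eqref{branch-eq2}, would reintroduce a spurious $z^{1/2}\log z$, so the advantage of working with the vanishing cycle directly is that it never appears.
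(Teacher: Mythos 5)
Your proposal is correct in substance and, in outline, it \emph{is} the paper's proof: the paper's entire argument for Theorem \ref{bei-thm1} consists of the pointer ``argue as in \S\ref{m-fermat-sect}'' together with exactly the two geometric facts you reconstruct, namely that for $t>0$ the $F_\infty$-anti-invariant line in $H_1^B(X_t(\C),\Q)$ is spanned by the cycle around $[1,1/t]$ (the vanishing cycle at $t=1$, with period $2\pi\sqrt{-1}\,F(\tfrac{1}{2},\tfrac{1}{2},1;1-t)$), while for $t<0$ it is spanned by the cycle around $[1/t,0]$, whose period is the log-free solution $\propto z^{1/2}F(\tfrac{1}{2},\tfrac{1}{2},1;z)$ at $t=\infty$. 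Your treatment of case (1) --- the ODE from Proposition \ref{m-fermat-prop1}, termwise integration, and the constant $2\psi(\tfrac{1}{2})-2\psi(1)=-\log 16$ fixed by Beilinson's formula on the shrinking loop as $t\to1^-$ --- is precisely the Theorem \ref{m-fermat-thm1} argument the paper intends, and your observation that the ${}_4F_3$ differential identity survives $a=b$ even though the connection formula of Lemma \ref{m-fermat-thm4} does not is the right reason the $a=b$ case is still a theorem (cf.\ Conjecture \ref{m-gauss-thm2}) here.

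The one genuine divergence is how you kill the integration constant in case (2), and there your route is harder than necessary. You discard Theorem \ref{m-fermat-thm2} wholesale because the two-term formula \eqref{branch-eq2} degenerates at $a=b$; but only that connection-formula half of its proof degenerates. The constant-fixing half --- the monodromy operator $Q=\prod(T_\infty-p_i)(T_\infty-q_j)$, which for the Legendre family is literally $(T_\infty+1)^2$ and so annihilates the local system even though $T_\infty$ is not semisimple --- goes through verbatim and gives $4C\in\Q(2)$, hence $C\in\Q(2)\subset\R$. That is all one needs: a real constant dies under $\mathrm{Re}\circ(2\pi\sqrt{-1})^{-1}$, so $\reg_\R$ has no constant term. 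By contrast you aim to prove the constant is actually $0$ via the limit $t\to-\infty$ of the Beilinson integral; this is more than is needed (and more than one can expect of the complex regulator, which is only well defined modulo $\Q(2)$), it requires the delicate local analysis near the degenerate fiber that you yourself flag as ``the hard part'' and do not carry out, whereas the monodromy argument is soft. Two smaller points: your displayed primitive in case (1) has a sign slip --- the function whose $t$-derivative equals $\frac{1}{t-1}F(\tfrac{1}{2},\tfrac{1}{2},1;1-t)$ is $+\log(1-t)+\tfrac{1-t}{4}\,{}_4F_3$, not its negative (your final formula is nevertheless the correct one) --- and the exact numerical normalization (coefficient $1$, not $\pm2$) depends on which generator of $H_1^B(X_t(\C),\Q)^{F_\infty=-1}\cong\Q$ is fixed in the definition of $\reg_\R$, a choice the paper pins down implicitly by declaring the periods of $\delta_t$, $\gamma_t$ and which your ``$\propto$'' leaves open.
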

We can prove Theorem \ref{bei-thm1} in a similar way to \S \ref{m-fermat-sect},
on noting the following.
\begin{description}
\item[Case $t>0$]  $H_1^B(X_t(\C),\Q)^{F_\infty=-1}$ is spanned by a homology cycle
$\delta_t$ going around the interval from $x=1$ to $x=t^{-1}$, and
\[
\int_{\delta_t}\frac{dx}{y}=
2\pi \sqrt{-1}F\left(\frac{1}{2},\frac{1}{2},1;1-t\right),
\]
\item[Case $t<0$] $\bysame$ $\gamma_t$ $\bysame$ from $x=0$ to $x=t^{-1}$, and
\[
\int_{\gamma_t}\frac{dx}{y}=
2\pi z^{\frac{1}{2}}F\left(\frac{1}{2},\frac{1}{2},1;z\right),\quad z=(1-t)^{-1}.
\]
\end{description}

\begin{cor}
Let $t\in\Q\setminus\{0,1\}$ such that $\xi_t$ is integral. Then we have an equivalence
\begin{align}
&\mbox{Beilinson's Conjecture \ref{bei-conj} for $X_t$}\quad\Longleftrightarrow\quad\\
&\pi^{-2}L(X_t,2)\sim_{\Q^\times}
\begin{cases}
\mathrm{Re}\left[
-\log16+\log(1-t)+ 
\frac{1-t}{4}{}_4F_3\left({\frac{3}{2},\frac{3}{2},1,1\atop
2,2,2};1-t\right)\right]&t>0\\
z^{\frac{1}{2}}{}_3F_2\left({\frac{1}{2},\frac{1}{2},\frac{1}{2}\atop
1,\frac{3}{2}};z\right)&t<0.\label{bei-thm1-eq1}
\end{cases}
\end{align}
\end{cor}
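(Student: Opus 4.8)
The plan is to read off the corollary directly from Theorem~\ref{bei-thm1} and the statement of Conjecture~\ref{bei-conj}, with essentially no further computation. By hypothesis $\xi_t=\xi|_{X_t}$ is an integral element of $H^2_\cM(X_t,\Z(2))$, so it is an admissible test element for Beilinson's conjecture attached to the elliptic curve $X_t$. Accordingly I would read Conjecture~\ref{bei-conj} for $X_t$ as the assertion
\[
\reg_\R(\xi_t)\sim_{\Q^\times}\pi^{-2}L(X_t,2).
\]

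Next I would substitute the explicit evaluation of $\reg_\R(\xi_t)$ furnished by Theorem~\ref{bei-thm1}, namely the $\mathrm{Re}$-expression involving the logarithm and ${}_4F_3$ for $t>0$, and the $z^{1/2}\,{}_3F_2$-expression for $t<0$ with $z=(1-t)^{-1}$. Since the relation $\sim_{\Q^\times}$ is symmetric --- it asks only that $xy\ne0$ and $xy^{-1}\in\Q^\times$ --- the displayed assertion is then literally equivalent to $\pi^{-2}L(X_t,2)$ being $\Q^\times$-equivalent to the corresponding right-hand side in the corollary. This yields both implications at once, once the identification of $\xi_t$ with the relevant integral element is granted. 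The backward direction ``$\Leftarrow$'' is in any case unconditional: if $\pi^{-2}L(X_t,2)$ is $\Q^\times$-equivalent to $\reg_\R(\xi_t)$, then the integral element $\xi_t$ itself realizes the conclusion of Conjecture~\ref{bei-conj}.

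The delicate point, and the main obstacle, is the forward direction ``$\Rightarrow$''. Conjecture~\ref{bei-conj} as stated produces only \emph{some} integral element whose real regulator is $\Q^\times$-proportional to $\pi^{-2}L(X_t,2)$, whereas the corollary concerns the specific symbol $\xi_t$ from \eqref{bei-thm1-eq0}. To transfer the proportionality to $\xi_t$ one needs $\reg_\R(\xi_t)\ne0$ together with the conjectural one-dimensionality of the integral subspace $H^2_\cM(X_t,\Q(2))_\Z$, which would force $\xi_t$ to be a $\Q^\times$-multiple of the distinguished generator. As the paper explicitly sets this one-dimensionality aside, I would phrase the corollary for Conjecture~\ref{bei-conj} interpreted directly for the element $\xi_t$; under that reading the forward direction too is a pure substitution of Theorem~\ref{bei-thm1}, and the proof collapses to invoking that theorem and the symmetry of $\sim_{\Q^\times}$.
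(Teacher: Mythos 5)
Your proposal is correct and matches the paper's treatment: the paper offers no separate proof of this corollary, regarding it as an immediate substitution of the formulas of Theorem \ref{bei-thm1} for $\reg_\R(\xi_t)$ into the statement of Conjecture \ref{bei-conj}, exactly as you do. Your caveat about the forward direction is a legitimate point the paper silently glosses over --- since Conjecture \ref{bei-conj} as stated only asserts existence of \emph{some} integral element, transferring the $\Q^\times$-proportionality to the specific symbol $\xi_t$ tacitly requires either the one-dimensionality of $H^2_\cM(X_t,\Q(2))_\Z$ (set aside by the author) together with $\reg_\R(\xi_t)\ne0$, or else reading the conjecture as being tested on $\xi_t$ itself, which is evidently the intended interpretation and the one under which both implications reduce to pure substitution.
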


\begin{cor}
Beilinson's Conjecture \ref{bei-conj} is true for $X_{-3}$.
\end{cor}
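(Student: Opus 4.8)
The plan is to read the case $t=-3$ entirely through the machinery already assembled. Since $-3<0$ we are in the second case of Theorem \ref{bei-thm1}, with $z=(1-t)^{-1}=1/4$, so that
\[
\reg_\R(\xi_{-3})=z^{1/2}\,{}_3F_2\left({\frac12,\frac12,\frac12\atop 1,\frac32};z\right)\Big|_{z=1/4}
=\frac12\,{}_3F_2\left({\frac12,\frac12,\frac12\atop 1,\frac32};\frac14\right),
\]
where the prefactor $z^{1/2}=1/2$ is rational and hence harmless modulo $\Q^\times$. By the preceding Corollary, applied at the rational point $t=-3$, Beilinson's Conjecture \ref{bei-conj} for $X_{-3}$ is equivalent, once $\xi_{-3}$ is known to be integral in the sense of Scholl \cite{Scholl}, to the single arithmetic identity
\[
\pi^{-2}L(X_{-3},2)\sim_{\Q^\times}\frac12\,{}_3F_2\left({\frac12,\frac12,\frac12\atop 1,\frac32};\frac14\right).
\]
Thus the entire problem is reduced to verifying this one relation together with the integrality of the symbol.

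Next I would identify the fiber and invoke the known transcendental identity. The affine curve $X_{-3}:y^2=x(1-x)(1+3x)$ has a smooth projective model that is an elliptic curve over $\Q$; computing a minimal Weierstrass model (or consulting a table) shows that it lies in the isogeny class of conductor $24$, the class singled out in the Introduction. This is precisely the curve for which Rogers and Zudilin \cite{RZ} established a closed formula expressing $L(E,2)$ as a rational multiple of $\pi^2$ times exactly the special value ${}_3F_2(\frac12,\frac12,\frac12;1,\frac32;\frac14)$. Quoting their theorem yields the displayed relation $\pi^{-2}L(X_{-3},2)\sim_{\Q^\times}\reg_\R(\xi_{-3})$, provided the normalizations are aligned. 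Non-vanishing of both sides is automatic: the hypergeometric series has all positive terms and so is a positive real number, while $L(X_{-3},2)\ne0$ because $s=2$ lies in the region of absolute convergence of the Euler product.

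Two points remain, and these I expect to be the real work. First, one must check that the symbol $\xi_{-3}$ of \eqref{bei-thm1-eq0} is integral: by construction $\xi\in K_2(X\setminus Y)$ has controlled boundary (the content of the analysis in \S \ref{K2-sect} and Theorem \ref{boundary-thm1}), and at $t=-3$ one has to verify that the tame symbols at the finite bad fibers, and the behavior at the archimedean place, meet Scholl's integrality condition; the non-vanishing established above then guarantees that $\xi_{-3}$ is a genuine nonzero integral element, so the hypothesis of the preceding Corollary is legitimately satisfied. Second, the matching of constants: one must reconcile the motivic $L$-value normalization used in Conjecture \ref{bei-conj} with the analytic normalization in \cite{RZ}, and track the $\Q^\times$-ambiguity through the choice of minimal model and through any isogeny relating $X_{-3}$ to the standard representative of conductor $24$; since $\sim_{\Q^\times}$ only records the class modulo $\Q^\times$, this bookkeeping is harmless but must be carried out explicitly. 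The main obstacle is therefore not the transcendental identity, which \cite{RZ} hands us, but rather the integrality verification and the careful alignment of the two $L$-value normalizations.
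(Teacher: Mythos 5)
Your proposal follows essentially the same route as the paper: reduce via the preceding equivalence (Theorem \ref{bei-thm1}(2) at $z=1/4$), identify $X_{-3}$ with the unique isogeny class of conductor $24$, and quote the Rogers--Zudilin formula $\frac{\pi^2}{12}\,{}_3F_2\bigl({\frac12,\frac12,\frac12\atop \frac32,1};\frac14\bigr)=L(E_{24},2)$. The one item you defer as ``the real work'' --- integrality of $\xi_{-3}$ in the sense of Scholl --- is not an open problem in the paper's framework: the paper gives the concrete criterion that $\xi_t$ is integral whenever $\ord_p(j(X_t))\geq 0$ for every prime $p$ with $\ord_p(1-t)\neq 0$, and at $t=-3$ one has $1-t=4$ and $j(X_{-3})=2^4\cdot 13^3/3^2$, so the only relevant prime is $p=2$ and the condition holds; this is exactly how $t=-3$ enters the paper's list of admissible values. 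With that check inserted, your argument is complete and matches the paper's.
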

\begin{proof}
Indeed Rogers and Zudilin shows  
\[
\frac{\pi^2}{12}\,{}_3F_2\left({\frac{1}{2},\frac{1}{2},\frac{1}{2}\atop \frac{3}{2},1};
\frac{1}{4}\right)=L(E_{24},2)
\]
where $E_{24}$ is an elliptic curve over $\Q$ of conductor $24$
(\cite{RZ} Theorem 2, p.399 and (6), p.386).
There is only one elliptic curve of conductor $24$ up to isogeny, and
$X_{-3}$ is the one. Hence \eqref{bei-thm1-eq1} holds.
\end{proof}
\begin{center}
\bf Numerical Examples
\end{center}
By definition of the symbol $\xi$ in \eqref{bei-thm1-eq0}, 
$\xi_t$ is integral
if $X_t$ does not have a multiplicative reduction at any
prime $p$ such that $\ord_p(1-t)\ne0$.
In more practical way, $\xi_t$ is integral if
\[
\ord_p(j(X_t))=\ord_p\left(\frac{256(t^2-t+1)^3}{t^2(1-t)^2}\right)\geq 0
\mbox{ for any $p$ s.t. }\ord_p(1-t)\ne0
\]
\[
\Longleftrightarrow\quad t=-1,-3,-7,-15, 2, 3, 5, 9, 17,
\frac{1}{2}, \frac{3}{2}, 
\frac{7}{8}, \frac{9}{8}, \frac{3}{4}, \frac{5}{4}, \frac{15}{16}, \frac{17}{16}.
\]
Put
\[
R_t:=\reg_\R(\xi_t)/(\pi^{-2}L(X_t,2)).
\]
Here is the list of numerical verification of Beilinson's Conjecture \ref{bei-conj}
for above $t$'s with the aid of MAGMA (digits of precision is at least  20).

\begin{center}
 \begin{tabular}{c|c|c|c|c|c|c|c|c|c|c|c|c|c|c|c|c|c|c}
 $t$&$-1$&$-3$&$-7$&$-15$&$2$&$3$&$5$&$9$&$17$\\
 \hline
$R_t$&
$8$&$6$&$\frac{7}{2}$&$\frac{15}{4}$&$-32$&$-24$&$-20$&$-18$&$-17$
\end{tabular}
 \end{center}

\begin{center}
 \begin{tabular}{c|c|c|c|c|c|c|c|c|c|c|c|c|c|c|c|c|c|}
 $t$&$\frac{1}{2}$&$\frac{3}{2}$&$\frac{7}{8}$&$\frac{9}{8}$
 &$\frac{3}{4}$&$\frac{5}{4}$&$\frac{15}{16}$&$\frac{17}{16}$\\
 \hline
$R_t$&
$-32$&$-48$&$-56$&$-48$&$-48$&$-40$&$-\frac{165}{2}$&$-68$
\end{tabular}
 \end{center}
\subsection{}
Let $f:X\to \P^1$ be en elliptic fibration defined by $3y^2=2x^3-3x^2+t$.
Put
\begin{equation}\label{bei-thm1-eq2}
\xi:=\left\{
\frac{y-x+1}{y+x-1},
\frac{1-t}{2(x-1)^3}
\right\}\in K_2(X\setminus Y),\quad Y:=f^{-1}(1).
\end{equation}
In a similar way to Theorem \ref{bei-thm1} we have the following theorem.
\begin{thm}
Let $t\in\R\setminus\{0,1\}$.
If $|t-1|<1$, then
\[
\reg_\R(\xi_t)=\log432-\log (1-t)
-\frac{5}{36}(1-t)\,{}_4F_3\left(
{\frac{7}{6},\frac{11}{6},1,1\atop 2,2,2};1-t
\right).
\]
If $|t-1|>1$, then 
\[
\reg_\R(\xi_t)=\pi^{-1}\left[
\frac{3}{2}B\left(\frac{1}{6},\frac{1}{6}\right)z^{\frac{1}{6}}\,{}_3F_2\left(
{\frac{1}{6},\frac{1}{6},\frac{1}{6}\atop \frac{1}{3},\frac{7}{6}};z
\right)
+\frac{3}{10}B\left(\frac{5}{6},\frac{5}{6}\right)z^{\frac{5}{6}}\,{}_3F_2\left(
{\frac{5}{6},\frac{5}{6},\frac{5}{6}\atop \frac{5}{3},\frac{11}{6}};z
\right)\right]
\]
where $z:=(1-t)^{-1}$.
\end{thm}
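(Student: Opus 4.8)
The plan is to follow the strategy of Theorem \ref{bei-thm1}, and behind it Theorems \ref{m-fermat-thm1} and \ref{m-fermat-thm2}: reduce $\reg_\R(\xi_t)$ to a first order differential equation in $t$ by Proposition \ref{m-fermat-prop1}, integrate it against a Gauss hypergeometric period, and then pin down the single constant of integration by a boundary analysis at $t=1$ using Beilinson's explicit formula. Since this $f$ is an elliptic (hence HG) fibration with $R=E=\Q$ and $e=\id$, the machinery of \S\ref{m-fermat-sect} applies verbatim, with the only new input being the explicit period of this particular curve.

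First I would compute $\dlog(\xi)$. Writing $f=\frac{y-x+1}{y+x-1}$ and $g=\frac{1-t}{2(x-1)^3}$, a direct calculation of $\frac{df}{f}\wedge\frac{dg}{g}$, in which $dt$ is traded against $dx,dy$ via $3y^2=2x^3-3x^2+t$, should give $\dlog(\xi)=\frac{dt}{t-1}\wedge\omega$ with $\omega$ a constant multiple of the invariant differential $\frac{dx}{y}$ (as in the analogous identity $\dlog\xi=\frac{dx}{y}\frac{dt}{t-1}$ of Theorem \ref{bei-thm1}). Proposition \ref{m-fermat-prop1} then yields, after projection to $\R(1)$,
\[
(t-1)\frac{d}{dt}\reg_\R(\xi_t)=\pm\frac{1}{2\pi\sqrt{-1}}\int_{\gamma_t}\omega,
\]
where $\gamma_t$ generates $H_1^B(X_t(\C),\Q)^{F_\infty=-1}$. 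The discriminant of $2x^3-3x^2+t$ has a double root at $t=0$ and at $t=1$ (indeed $2x^3-3x^2+t=(x-1)^2(2x+1)+(t-1)$), while the monodromy at $t=\infty$ has eigenvalues $e^{\pm\pi\sqrt{-1}/3}$ of order $6$; hence the Picard--Fuchs equation has exponents $\tfrac16,\tfrac56$. Reducing the elliptic integral to a Beta integral after a fractional linear change of variable, in the manner of Lemma \ref{Fermat-lem2}, should express $\frac{1}{2\pi\sqrt{-1}}\int_{\gamma_t}\omega$, up to an explicit algebraic constant coming from the coefficients of the cubic, as $F(\tfrac16,\tfrac56,1;1-t)$.

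The integration of the resulting ODE is then formal. For $|t-1|<1$ I would apply Zudilin's Lemma \ref{m-fermat-thm4} with $a=\tfrac16,\,b=\tfrac56$, producing the ${}_4F_3\!\left({7/6,11/6,1,1\atop 2,2,2};1-t\right)$ term whose coefficient is $ab=\tfrac{5}{36}$, together with the $-\log(1-t)$ term. For $|t-1|>1$ I would instead apply the connection formula \eqref{branch-eq2} in the variable $z=(1-t)^{-1}$ and integrate termwise, so that the two ${}_3F_2$ summands emerge with the correct parameters $\bigl(\tfrac16,\tfrac16,\tfrac16;\tfrac13,\tfrac76\bigr)$ and $\bigl(\tfrac56,\tfrac56,\tfrac56;\tfrac53,\tfrac{11}{6}\bigr)$ (note $1+a-b=\tfrac13$, $a+1=\tfrac76$, etc.), the prefactors $\tfrac32 B(\tfrac16,\tfrac16)$ and $\tfrac3{10}B(\tfrac56,\tfrac56)$ and the overall $\pi^{-1}$ being fixed by the explicit connection coefficients of the normalized period combined with the factors $a^{-1},b^{-1}$. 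The two displayed formulas are then analytic continuations of one another across $|t-1|=1$ via exactly this connection formula, which I would record as a consistency check.

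The main obstacle will be the constant of integration, namely producing $\log 432$ (and fixing the real normalization). As in the determination of $C_0$ in Theorem \ref{m-fermat-thm1}, I would substitute Beilinson's formula $\int_{\gamma}\log f\,\dlog g-\log g(O)\,\dlog f$ into a small loop $\gamma_t$ around the degenerating point of $X_t$ as $t\to1$, expand $x-1$, $y$, $f$ and $g$ to leading order using $2x^3-3x^2+t=(x-1)^2(2x+1)+(t-1)$, and read off the residue-type contribution modulo $\Q(1)$; the arithmetic factor $432=2^4\cdot 3^3$ should come precisely from the constants $2,3$ in the equation and the $2$ and $(x-1)^3$ in the symbol $g$. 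Finally I would identify the $F_\infty=-1$ generator $\gamma_t$ in each region (its real topology changes at $t=0,1$, where the cubic passes between three and one real roots) and verify that, after the projection $\C/\Z(2)\to\R(1)$ defining $\reg_\R$, the stated expressions are the correct real representatives — in the subrange where $\log(1-t)$ is not already real this amounts, as in Theorem \ref{bei-thm1}, to taking the real part.
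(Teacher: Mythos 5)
Your proposal is correct and follows essentially the same route as the paper: the paper proves this theorem exactly ``in a similar way to Theorem \ref{bei-thm1}'', i.e.\ by the machinery of \S\ref{m-fermat-sect} --- the ODE from Proposition \ref{m-fermat-prop1} applied to $\dlog\xi$, the period of the anti-invariant cycle as a Gauss hypergeometric function with parameters $\tfrac16,\tfrac56$, Zudilin's Lemma \ref{m-fermat-thm4} for $|t-1|<1$, the connection formula \eqref{branch-eq2} for $|t-1|>1$, and a Beilinson-formula boundary analysis at $t=1$ for the constant. Your parameter bookkeeping ($a=\tfrac16$, $b=\tfrac56$, $ab=\tfrac5{36}$, $1+a-b=\tfrac13$, $1+b-a=\tfrac53$, $\log 432 = 2\psi(1)-\psi(\tfrac16)-\psi(\tfrac56)=\log(2^4\cdot 3^3)$) is consistent with the stated formulas, so the remaining work is only the explicit computations you already flag.
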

\begin{center}
\bf Numerical Examples
\end{center}
Let $n\geq 2$ be an integer and let $t=1-1/n$.
Then $j(X_t)=432n^2/(n-1)$ and
\[
\xi_t=\left\{
\frac{y-x+1}{y+x-1},
\frac{1}{2n(x-1)^3}
\right\}.
\]
Therefore if the denominator of $432n^2/(n-1)$ is prime to $6n$,
then $\xi_t$ is integral.
There exist infinitely many such $n$'s (e.g. $n\geq 2$ such that $n\equiv 0,2$ mod $6$).
\begin{center}
 \begin{tabular}{c|c|c|c|c|c|c|c|c|c|c|c|c|c|c|c|c|c|c|c}
 $n$&$2$&$3$&$4$&$5$&$6$&$7$&$8$&$9$&$10$&$11$\\
 \hline
$R_t$&
$72$&$\frac{486}{7}$&$81$&$\frac{135}{2}$&$\frac{4860}{67}$&$\frac{189}{2}$
&$\frac{1512}{19}$&$81$&$90$&$\frac{165}{2}$
\end{tabular}
 \end{center}
\begin{center}
 \begin{tabular}{c|c|c|c|c|c|c|c|c|c|c|c|c|c|c|c|c|c|c|c}
 $n$&$12$&$13$&$14$&$15$&$16$&$17$&$18$&$19$&$20$&$21$\\
 \hline
$R_t$&
$\frac{2673}{28}$&$\frac{13689}{176}$&$\frac{34398}{443}$
&$\frac{1701}{19}$&$\frac{405}{8}$
&$\frac{2601}{23}$&$\frac{2754}{29}$&$\frac{3249}{40}$&$\frac{171}{2}$
&$\frac{8505}{104}$
\end{tabular}
 \end{center}

\subsection{}
Let $f:X\to \P^1$ be en elliptic fibration defined by $y^2=x^3+(3x+4t)^2$.
Put
\begin{equation}\label{bei-thm1-eq3}
\xi:=\left\{
\frac{y-3x-4t}{-8t},
\frac{y+3x+4t}{8t}
\right\}\in K_2(X\setminus Y), \quad Y:=f^{-1}(1).
\end{equation}
\begin{thm}
Let $t\in\R\setminus\{0,1\}$.
If $0<|t|<1$, then
\[\reg_\R(\xi_t)=
\log27-\log t
-\frac{2t}{9}\,{}_4F_3\left(
{\frac{4}{3},\frac{5}{3},1,1\atop 2,2,2};t
\right).\]
If $|t|>1$, then
\[\reg_\R(\xi_t)=
\sqrt{3}\pi^{-1}
\left[
B\left(\frac{1}{3},\frac{1}{3}\right)
t^{-\frac{1}{3}}{}_3F_2\left({\frac{1}{3},\frac{1}{3},\frac{1}{3}
\atop \frac{2}{3},\frac{4}{3}};t^{-1}\right)
+\frac{1}{2}B\left(\frac{2}{3},\frac{2}{3}\right)
t^{-\frac{2}{3}}{}_3F_2\left({\frac{2}{3},\frac{2}{3},\frac{2}{3}
\atop \frac{4}{3},\frac{5}{3}};t^{-1}\right)
\right].
\]
\end{thm}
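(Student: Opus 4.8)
The plan is to reuse the mechanism behind Theorem \ref{bei-thm1}: compute $\dlog\xi$, turn it into a first-order differential equation for the regulator pairing via Proposition \ref{m-fermat-prop1}, recognize the fiber period as a Gauss hypergeometric function, integrate, and fix the constant of integration. The first step is the only one special to this fibration. Set $p:=y-3x-4t$ and $q:=y+3x+4t$; the defining equation is precisely $pq=y^2-(3x+4t)^2=x^3$, so that $\frac{dp}{p}+\frac{dq}{q}=\frac{3\,dx}{x}$ holds identically on $X$. Since $\xi=\{-p/(8t),\,q/(8t)\}$, we have $\dlog\xi=(\frac{dp}{p}-\frac{dt}{t})\wedge(\frac{dq}{q}-\frac{dt}{t})$, and a short manipulation using $pq=x^3$, $p+q=2y$ and $q-p=6x+8t$ collapses every term into
\[
\dlog\xi=\frac{3\,dt}{t}\wedge\frac{dx}{y}.
\]
Thus $\dlog\xi$ is singular only at $t=0$ (not at $t=1$), which is what forces $\log t$ rather than $\log(1-t)$ into the answer.

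Next I would determine the period. The cubic $x^3+9x^2+24tx+16t^2$ has discriminant $6912\,t^3(1-t)$, with nodal fibers at $t=0$ (where $y^2=x^2(x+9)$) and $t=1$ (where $y^2=(x+1)(x+4)^2$); the Picard--Fuchs equation of $\int\frac{dx}{y}$ therefore has local exponents $\{0,0\}$ at $t=0,1$ and $\{\frac13,\frac23\}$ at $t=\infty$, so its period lattice is spanned by $F(\frac13,\frac23,1;t)$. Taking $\gamma_t$ to be a generator of $H^B_1(X_t(\C),\Q)^{F_\infty=-1}$ and normalizing, I would show $\frac{1}{2\pi\sqrt{-1}}\int_{\gamma_t}\frac{dx}{y}=-\frac13 F(\frac13,\frac23,1;t)$. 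By Proposition \ref{m-fermat-prop1} the real regulator $F:=\reg_\R(\xi_t)$ then obeys
\[
t\,\frac{dF}{dt}=-F\left(\frac13,\frac23,1;t\right).
\]
For $0<|t|<1$ I integrate the series directly: the constant term of $F(\frac13,\frac23,1;t)$ produces $-\log t$, and the remaining terms resum into $-\frac{2t}{9}\,{}_4F_3(\frac43,\frac53,1,1;2,2,2;t)$ by exactly the computation of Theorem \ref{m-fermat-thm1} and Remark \ref{m-fermat-rem1} with $a=\frac13$, $b=\frac23$, $ab=\frac29$.

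For $|t|>1$ I instead expand $F(\frac13,\frac23,1;t)$ at $t=\infty$ through the connection formula (\cite{NIST} 15.8.2), writing it as a combination of $t^{-1/3}F(\frac13,\frac13,\frac23;t^{-1})$ and $t^{-2/3}F(\frac23,\frac23,\frac43;t^{-1})$, and integrate each branch; integrating $t^{-1}\cdot t^{-a}\,{}_2F_1$ produces $t^{-a}$ times a ${}_3F_2$ of type $F_{a,b}$, precisely as in the passage from \eqref{branch-eq2} to Theorem \ref{m-fermat-thm2}, with the Gamma and sine factors combining into $\sqrt3\,\pi^{-1}B(\frac13,\frac13)$ and $\frac12\sqrt3\,\pi^{-1}B(\frac23,\frac23)$. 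The main obstacle, here as in all theorems of this kind, is pinning the two additive constants. In the $|t|>1$ regime the constant is killed by a monodromy argument at the unipotent point $t=0$, exactly as the operator $Q$ kills $C$ in the proof of Theorem \ref{m-fermat-thm2} (the constant lies in $\Q(2)\subset\R$ and so dies under the projection to $\R(1)$ defining $\reg_\R$). In the $|t|<1$ regime the genuine constant $\log 27$ must be extracted by evaluating Beilinson's explicit loop integral (\cite{hain} Proposition 6.3) as $t\to0$, in the style of the final computation in the proof of Theorem \ref{m-fermat-thm1}; I expect $27=3^3$ to come from the local equation $y^2=x^2(x+9)$ of the node, and as a consistency check note $2\psi(1)-\psi(\frac13)-\psi(\frac23)=3\log3=\log 27$, matching Zudilin's identity (Lemma \ref{m-fermat-thm4}). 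Throughout, the real-part projection in $\reg_\R$ and the branches of $t^{1/3}$ must be tracked so that both displayed formulas represent the one real-analytic function $\reg_\R(\xi_t)$.
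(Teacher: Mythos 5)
Your route is exactly the one the paper intends: the paper gives no separate argument for this theorem (it is the last of the elliptic examples handled ``in a similar way to Theorem \ref{bei-thm1}'', i.e.\ by the machinery of \S\ref{m-fermat-sect}), and that is precisely your chain: compute $\dlog\xi$, apply Proposition \ref{m-fermat-prop1}, identify the fiber period, integrate, fix constants. Your fibration-specific computations are correct. Writing $p=y-3x-4t$, $q=y+3x+4t$ one has $pq=x^3$, and indeed $\dlog\xi=\frac{3\,dt}{t}\wedge\frac{dx}{y}$ (this checks out upon expressing $dy$ through $dx,dt$ on the surface, using $3x^2+6(3x+4t)-6y=3x^2-6p$ and $3x+4t-y=-p$); the discriminant $6912\,t^3(1-t)$ is right, so the singular fibers sit at $t=0,1,\infty$ with exponents $\{0,0\},\{0,0\},\{\frac13,\frac23\}$, forcing the Picard--Fuchs operator to be the hypergeometric one for $F(\frac13,\frac23,1;t)$; the normalization $\mp\frac13$ is the residue of $dx/y$ at the node of $y^2=x^2(x+9)$, where $y\approx\pm3x$; the resummation $\sum_{n\ge1}\frac{(1/3)_n(2/3)_n}{(n!)^2\,n}t^n=\frac{2t}{9}\,{}_4F_3\bigl({\frac43,\frac53,1,1\atop 2,2,2};t\bigr)$ is correct; and so is the check $2\psi(1)-\psi(\tfrac13)-\psi(\tfrac23)=3\log 3=\log 27$.

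One step fails as written: the constant in the $|t|>1$ regime cannot be killed by monodromy ``at the unipotent point $t=0$''. A unipotent operator has eigenvalue $1$, so $T_0-1$ annihilates constants and gives no constraint; that is exactly why, in the proof of Theorem \ref{m-fermat-thm2}, the operator $Q$ is built from $T_\infty$, whose eigenvalues $e^{2\pi\sqrt{-1}a_i},e^{2\pi\sqrt{-1}b_j}$ are $\ne 1$. Here you must likewise work at $t=\infty$: since the exponents there are $\frac13,\frac23$, the operator $Q=(T_\infty-e^{2\pi\sqrt{-1}/3})(T_\infty-e^{4\pi\sqrt{-1}/3})$ annihilates both branches $t^{-1/3}F_{1/3,2/3}(t^{-1})$ and $t^{-2/3}F_{2/3,1/3}(t^{-1})$, while $QC=(1-e^{2\pi\sqrt{-1}/3})(1-e^{4\pi\sqrt{-1}/3})C=3C$, so $C\in\Q(2)\subset\R$ and dies under the projection to $\R(1)$ --- your parenthetical mechanism, applied at the right point. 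Relatedly, your $|t|<1$ constant is so far only predicted: either carry out the Beilinson-formula limit as $t\to0$ near the node (as at the end of the proof of Theorem \ref{m-fermat-thm1}), or, more economically, prove the $|t|>1$ formula first by the corrected monodromy argument and then transport it to $|t|<1$ through Lemma \ref{m-fermat-thm4}, which produces the constant $2\psi(1)-\psi(\tfrac13)-\psi(\tfrac23)=\log 27$ automatically. With these repairs your proposal coincides with the paper's intended proof.
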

\begin{center}
\bf Numerical Examples
\end{center}
Let $n\geq 1$ be an integer, and let $t=\frac{1}{6n}$.
Then $j(X_t)=1296(4-27n)^3n/(6n-1)$ and
\[
\xi_t=\left\{
-\frac{3n}{4}\left(y-3x-\frac{2}{3n}\right),
\frac{3n}{4}\left(y+3x+\frac{2}{3n}\right)\right\}.
\]
Since the denominator of $j(X_t)$ is prime to $6n$, $\xi_t$ is integral for all $n\geq 1$.

\begin{center}
 \begin{tabular}{c|c|c|c|c|c|c|c|c|c|c|c|c|c|c|c|c|c|c|c}
 $n$&$1$&$2$&$3$&$4$&$5$&$6$&$7$&$8$&$9$&$10$\\
 \hline
$R_t$&
$\frac{405}{8}$&$\frac{891}{16}$&$\frac{1377}{20}$&$\frac{5589}{88}$
&$\frac{19575}{256}$&$\frac{135}{2}$&$\frac{54243}{776}$&$\frac{1269}{16}$
&$\frac{477}{8}$&$\frac{13275}{166}$
\end{tabular}
 \end{center}
\begin{center}
 \begin{tabular}{c|c|c|c|c|c|c|c|c|c|c|c|c|c|c|c|c|c|c|c}
 $n$&$11$&$12$&$13$&$14$&$15$&$16$&$17$&$18$&$19$&$20$\\
 \hline
$R_t$&
$\frac{70785}{1016}$&$\frac{5751}{64}$&$\frac{10647}{128}$
&$\frac{15687}{230}$&$\frac{20025}{248}$&$\frac{2565}{32}$
&$\frac{788103}{10172}$&$\frac{321}{4}$&$\frac{1101411}{14216}$&$\frac{80325}{872}$
\end{tabular}
 \end{center}
 
\subsection{Remark on the Elliptic dilogarithms}
Recall the {\it Bloch-Wigner function}
\[
D(x):=\mathrm{Im}(\mathrm{ln}_2(x))+\log|x|\mathrm{arg}(1-x).
\]
For $q\ne0$, the {\it elliptic dilogarithms} is defined to be
\[
D_q(x):=\sum_{n\in \Z}D(xq^n),
\]
satisfies $D_q(qx)=D_q(x)$ and $D_q(x^{-1})=-D_q(x)$ (\cite{Ir}, \cite{GL}). 

\medskip

Recall Bloch's formla which decsribes the real regulator via the elliptic dilogarithm
(cf. \cite{GL} p.416--417).
Noting that the $K_2$-symbols \eqref{bei-thm1-eq0} and \eqref{bei-thm1-eq3}
are defined by rational functions supported on torsion points, 
Bloch's formula implies the following.
\begin{thm}
If $-1<t<0$ then
\[
\frac{\pi}{4}
(1-t)^{-\frac{1}{2}}{}_3F_2\left({\frac{1}{2},\frac{1}{2},\frac{1}{2}
\atop 1,\frac{3}{2}};(1-t)^{-1}\right)
=D_q(i)+D_q(iq^{\frac{1}{2}})=D_{q^{\frac{1}{2}}}(i).
\]
If $0<t<1$ then
\[
-\frac{\pi}{8}\left(\log\frac{1-t}{16}+\frac{1-t}{4}
{}_4F_3\left({\frac{3}{2},\frac{3}{2},1,1
\atop 2,2,2};1-t\right)\right)
=D_q(i)+D_q(iq^{\frac{1}{2}}).
\]
where $i=\sqrt{-1}$ and we put\[
q:=\exp
\left(-2\pi\frac{F(\frac{1}{2},\frac{1}{2},1;1-t)}
{F(\frac{1}{2},\frac{1}{2},1;t)}\right).
\]
\end{thm}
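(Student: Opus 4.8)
The plan is to observe that both sides of each asserted identity are two different evaluations of one and the same quantity, the real regulator $\reg_\R(\xi_t)$ of the symbol $\xi$ in \eqref{bei-thm1-eq0}. Theorem \ref{bei-thm1} already computes $\reg_\R(\xi_t)$ in terms of the hypergeometric functions displayed on the left. On the other hand, the divisors of the two entries of $\xi$ are supported on torsion points, so Bloch's formula (cf.\ \cite{GL}, p.\,416--417) expresses $\reg_\R(\xi_t)$ as a $\Z$-linear combination of values of the elliptic dilogarithm $D_q$ at torsion points. Thus the proof reduces to carrying out this second evaluation and checking that the combination collapses to a fixed multiple of $D_q(i)+D_q(iq^{1/2})$; the two displayed equalities then follow by comparison with Theorem \ref{bei-thm1}, and the remaining identity $D_q(i)+D_q(iq^{1/2})=D_{q^{1/2}}(i)$ is obtained by splitting $\sum_n D(iq^{n/2})$ into its even and odd terms.

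First I would compute the two divisors. Writing $f=(y-x+1)/(y+x-1)$ and $g=(x-1)^2/(x^2(t-1))$, one checks that $f$ is regular and nonzero at $(1,0)$ (numerator and denominator both vanish to first order in the local parameter $y$), so that $\mathrm{div}(f)=(P_+)+(P_-)-([-1]P_+)-([-1]P_-)$, where $P_\pm$ are the points with $y=x-1$ and $x=(1\mp\sqrt{1-t}\,)^{-1}$, while $\mathrm{div}(g)=4\,(1,0)-4\,(0,0)$ is supported on $2$-torsion. Next I would pass to the analytic uniformization $x=\mathrm{sn}^2(w,k)$ with $k^2=t$, whose period lattice is $\langle 2K,2iK'\rangle$ with $\tau=iK'/K=\sqrt{-1}\,F(\tfrac12,\tfrac12,1;1-t)/F(\tfrac12,\tfrac12,1;t)$, precisely the $\tau$ for which $q=e^{2\pi\sqrt{-1}\,\tau}$ is the $q$ in the statement. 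Using the special values $\mathrm{sn}(K/2)=(1+k')^{-1/2}$ and $\mathrm{sn}(K/2+iK')=(1+k')^{1/2}/k$, so that $\mathrm{sn}^2(K/2)=(1+k')^{-1}$ and $\mathrm{sn}^2(K/2+iK')=(1-k')^{-1}$ with $k'=\sqrt{1-t}$, together with $O\leftrightarrow w=iK'$, I would read off the Tate coordinates of all support points: $(0,0)\leftrightarrow q^{1/2}$, $(1,0)\leftrightarrow -q^{1/2}$, and $P_+,P_-,[-1]P_+,[-1]P_-\leftrightarrow -i,-iq^{1/2},i,iq^{1/2}$. Here the relevant real structure on the Tate curve is $x\mapsto 1/\bar x$, so that $i$ and $iq^{1/2}$ are genuine real points, lying on the two real components.

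With these coordinates, the elliptic-dilogarithm form of Bloch's formula $\reg_\R(\xi_t)\sim\sum_{i,j}m_in_j\,D_q(P_i-Q_j)$ (difference in the group law, $D_q$ evaluated in the Tate coordinate) becomes a sum of eight $D_q$-values. Since each $2$-torsion point is its own inverse and $D_q(x^{-1})=-D_q(x)$, the eight terms pair up; then $D_q(\bar x)=-D_q(x)$, valid because $q$ is real, folds $D_q(-i)$ and $D_q(-iq^{1/2})$ onto $-D_q(i)$ and $-D_q(iq^{1/2})$, leaving a rational multiple of $D_q(i)+D_q(iq^{1/2})$. Substituting the hypergeometric value of $\reg_\R(\xi_t)$ from the case $0<t<1$ (resp.\ $-1<t<0$) of Theorem \ref{bei-thm1} produces the first (resp.\ second) displayed identity.

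The hard part will be the bookkeeping of the overall constant, i.e.\ reconciling the normalization implicit in Bloch's formula (and in $D_q$ through the Bloch--Wigner function) with the specific normalization of $\reg_\R$ used here, so as to output exactly the factors $\pi/4$ and $-\pi/8$. This is genuinely case-sensitive: for $t>0$ the space $H_1^B(X_t(\C),\Q)^{F_\infty=-1}$ is spanned by the cycle $\delta_t$ around $[1,t^{-1}]$, whereas for $t<0$ it is spanned by $\gamma_t$ around $[0,t^{-1}]$, and these two cycles carry different period normalizations (recorded after Theorem \ref{bei-thm1}); matching the geometric cycle against the cycle implicitly paired in the elliptic-dilogarithm formula is what accounts for the factor-of-two discrepancy between the two cases. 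A secondary subtlety is that one must use the correct real structure $x\mapsto 1/\bar x$: choosing $x\mapsto\bar x$ instead would misidentify which $4$-torsion points are real and destroy the collapse to $D_q(i)+D_q(iq^{1/2})$.
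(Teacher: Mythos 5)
Your proposal is correct and is essentially the paper's own proof: the paper simply remarks that, since the symbol \eqref{bei-thm1-eq0} is given by rational functions whose divisors are supported on torsion points, Bloch's formula (cf.\ \cite{GL}, p.\,416--417) combined with Theorem \ref{bei-thm1} yields the stated identities, and your explicit computations (the divisors of the two entries, the identification of the $4$-torsion support with the Tate coordinates $\pm i,\pm iq^{1/2}$ via $\mathrm{sn}$, the real structure $u\mapsto 1/\bar u$, the collapse to a multiple of $D_q(i)+D_q(iq^{1/2})$, and the even/odd splitting giving $D_{q^{1/2}}(i)$) correctly supply the details the paper leaves unwritten. The only part of your plan left open---pinning down the constants $\pi/4$ and $-\pi/8$ against the normalization of $\reg_\R$---is likewise not carried out in the paper, so your attempt is, if anything, more complete than the original argument.
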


\begin{thm}
If $1<t<2$ then
\begin{align*}
&B\left(\frac{1}{3},\frac{1}{3}\right)
t^{-\frac{1}{3}}{}_3F_2\left({\frac{1}{3},\frac{1}{3},\frac{1}{3}
\atop \frac{2}{3},\frac{4}{3}};t^{-1}\right)
+\frac{1}{2}B\left(\frac{2}{3},\frac{2}{3}\right)
t^{-\frac{2}{3}}{}_3F_2\left({\frac{2}{3},\frac{2}{3},\frac{2}{3}
\atop \frac{4}{3},\frac{5}{3}};t^{-1}\right)\\
&=6\sqrt{3}D_q(e^{2\pi i/3})
\end{align*}
where \[
q:=\exp
\left(\frac{-2\pi}{\sqrt{3}}\frac{F(\frac{1}{3},\frac{2}{3},1;t)}
{F(\frac{1}{3},\frac{2}{3},1;1-t)}\right).
\]
\end{thm}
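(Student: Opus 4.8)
Denote by $L(t)$ the left-hand side of the asserted identity. The plan is to evaluate $\reg_\R(\xi_t)$ in two ways and compare. On one side, the regulator theorem of the previous subsection for the fibration $y^2=x^3+(3x+4t)^2$ (case $|t|>1$) reads precisely $\reg_\R(\xi_t)=\frac{\sqrt3}{\pi}L(t)$, so that $L(t)=\frac{\pi}{\sqrt3}\reg_\R(\xi_t)$. Since $\frac{\pi}{\sqrt3}\cdot\frac{18}{\pi}=6\sqrt3$, it therefore suffices to prove, for $1<t<2$,
\[
\reg_\R(\xi_t)=\frac{18}{\pi}\,D_q\!\left(e^{2\pi\sqrt{-1}/3}\right).
\]
The content of the theorem is thus the evaluation of this real regulator through Bloch's formula.

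First I would compute the divisors of the two entries of $\xi_t=\{f,g\}$, with $f=(y-3x-4t)/(-8t)$ and $g=(y+3x+4t)/(8t)$. On $X_t$ the line $y=3x+4t$ is tangent at $P_0:=(0,4t)$ and meets the curve only there, with multiplicity three; hence $P_0$ is a flex, so a nontrivial $3$-torsion point, and
\[
\mathrm{div}(f)=3(P_0)-3(O),\qquad \mathrm{div}(g)=3(-P_0)-3(O),
\]
where $O$ is the point at infinity and $g=f\circ[-1]$. In particular the divisors of $f$ and $g$ are supported on the $3$-torsion subgroup, which is exactly the hypothesis under which Bloch's formula (\cite{GL}, pp.\,416--417) expresses $\reg_\R(\xi_t)$ as a rational combination of values of the elliptic dilogarithm $D_q$ at torsion points.

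Next I would fix the Tate uniformization $X_t(\C)\cong\C^\times/q^{\Z}$ with nome $q=e^{2\pi\sqrt{-1}\tau}$, where $\tau=\frac{\sqrt{-1}}{\sqrt3}\,F(\tfrac13,\tfrac23,1;t)/F(\tfrac13,\tfrac23,1;1-t)$ is the period ratio of the fibration; using $\mathrm{Im}\,F(\tfrac13,\tfrac23,1;t)=-\tfrac{\sqrt3}{2}F(\tfrac13,\tfrac23,1;1-t)$ for $t>1$, one checks that for $1<t<2$ this $q$ is the (negative, real, $|q|<1$) nome appearing in the statement, so that $X_t$ is a real Tate curve whose conjugation is $u\mapsto 1/\bar u$ and whose nontrivial real $3$-torsion points are $e^{\pm2\pi\sqrt{-1}/3}$ on the unit circle. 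Thus $P_0$ and $-P_0$ map to $e^{\pm2\pi\sqrt{-1}/3}$. Feeding the two divisors into Bloch's formula, whose bilinear shape is a fixed multiple of $\sum_{i,j}m_in_j\,D_q(a_i-b_j)$, and using $D_q(O)=0$ together with $2P_0=-P_0$ and $D_q(x^{-1})=-D_q(x)$, the four terms collapse to $-27\,D_q(P_0)$; with the normalizing constant $-\tfrac{2}{3\pi}$ this is exactly $\tfrac{18}{\pi}D_q\!\left(e^{2\pi\sqrt{-1}/3}\right)$, which together with the first paragraph proves the theorem.

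The main obstacle is pinning down this normalizing constant rigorously. The collapse to a single multiple of $D_q(e^{2\pi\sqrt{-1}/3})$ is formal, but the value $6\sqrt3$ rests on three points that each demand care: the exact normalization of Bloch's formula in \cite{GL}, including how the projection onto the $F_\infty=-1$ part and the factor $(2\pi\sqrt{-1})^{-1}$ in the definition of $\reg_\R$ contribute; the verification that the geometric period ratio of $X_t$ is precisely the $\tau$ above, so that the nome coming from the uniformization agrees with the $q$ in the statement; and the determination of which of $e^{\pm2\pi\sqrt{-1}/3}$ the flex $P_0$ represents, since the opposite choice reverses the overall sign through $D_q(x^{-1})=-D_q(x)$. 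A robust check that avoids most of the convention-chasing is the degeneration $t\to1^{+}$: there $|q|\to0$, so $D_q(e^{2\pi\sqrt{-1}/3})\to D(e^{2\pi\sqrt{-1}/3})$ becomes an ordinary Bloch--Wigner value, while $L(t)$ tends to the convergent ${}_3F_2$-values at argument $1$; matching these limiting values fixes the constant $6\sqrt3$ unambiguously.
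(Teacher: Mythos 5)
Your proposal takes essentially the same route as the paper: the paper's entire proof consists of combining the preceding regulator theorem for $y^2=x^3+(3x+4t)^2$ with Bloch's formula (citing \cite{GL}, pp.\ 416--417), noting that the entries of the symbol \eqref{bei-thm1-eq3} have divisors supported on torsion points — exactly your argument, with your computation $\mathrm{div}(f)=3(P_0)-3(O)$, $\mathrm{div}(g)=3(-P_0)-3(O)$, the $3$-torsion identification, and the collapse to $-27\,D_q(P_0)$ filling in details the paper leaves entirely implicit. The normalization issues you flag (the constant in Bloch's formula, matching the geometric nome with the stated $q$, and the choice between $e^{\pm 2\pi\sqrt{-1}/3}$) are likewise passed over in silence by the paper, so your treatment is, if anything, the more careful one.
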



\begin{thebibliography}{}

\bibitem[Ar]{archinard} N. Archinard, Hypergeometric abelian varieties, Canad. J. Math. {\bf 55} (2003), 897-932. 

\bibitem[AO1]{a-o-1} M. Asakura and N. Otsubo, {\it CM periods, CM regulators and hypergeometric functions, I}, To appear in Canad. J. Math. (arXiv:1503.07962). 
\bibitem[AO2]{a-o-2} M. Asakura and N. Otsubo, {\it CM periods, CM regulators and hypergeometric functions, II}, To appear in Math. Z. (arXiv:1503.08894). 
\bibitem[AO3]{a-o-3} M. Asakura and N. Otsubo, {\it 
Regulators on $K_1$ of hypergeometric fibrations}, preprint, arXiv:1709.04144.
\bibitem[AO4]{a-o-log} M. Asakura and N. Otsubo, 
{\it Regulators on $K_1$ hypergeometric fibrations and a logarithmic formula on ${}_3F_2(1,1,q;a,b;x)$}, preprint.

\bibitem[B]{Bailey}
W.N. Bailey, {\it Generalized Hypergeometric series.}
 Cambridge Tracts in Mathematics and Mathematical Physics, No. 32 Stechert-Hafner, Inc., New York 1964. 

\bibitem[Bl]{Ir}
S. Bloch,{\it
Higher regulators, algebraic K -theory, and zeta functions of elliptic curves}. CRM Monograph Series, 11. American Mathematical Society, Providence, RI, 2000. 
\bibitem[DW]{DW}
C. Deninger and K. Wingberg:
{\it On the Beilinson conjectures for elliptic curves with complex multiplications.}
In Beilinson's Conjectures on Special Values of $L$-Functions
(M. Rapoport, N. Schappacher and P. Schneider, ed),
Perspectives in Math. Vol.4, 249--272, 1988.

\bibitem[E]{bateman}A. Erd\'elyi et al. ed., {\it Higher transcendental functions}, Vol. 1, California Inst. Tech, 1981. 

\bibitem[GL]{GL} A. Goncharov and A. Levin,
{\it Zagier's conjecture on $L(E,2)$} . Invent. Math.  132  (1998),  no. 2, 393--432.

\bibitem[Ha]{hain} R. Hain,
{\it Classical polylogarithms}. (English summary)  Motives (Seattle, WA, 1991), 
 3--42, Proc. Sympos. Pure Math., 55, Part 2, Amer. Math. Soc., Providence, RI, 1994. 

\bibitem[H]{hirzebruch}
Schmickler-Hirzebruch, U.: {\it Elliptische Fl\"{a}chen \"{u}ber 
$\P^1(\C)$ mit drei Ausnahmefasern und die hypergeometrische 
Differentialgleichung}. Schriftreihe des Mathematischen Instituts der 
Universit\"{a}t M\"{u}nster, Februar 1985.

\bibitem[O1]{otsubo-1} N. Otsubo, {\it On the regulator of Fermat motives and generalized hypergeometric functions.} J. reine angew. Math. {\bf 660} (2011), 27-82. 

\bibitem[O2]{otsubo-2} N. Otsubo, 
{\it Certain values of Hecke $L$-functions and generalized hypergeometric functions.} 
J. Number
Theory {\bf 131} (2011), 648-660.

\bibitem[RZ]{RZ} M. Rogers and W. Zudilin, 
{\it From $L$-series of elliptic curves to Mahler measures}. 
Compos. Math.  \textbf{148}  (2012),  no. 2, 385--414. 

\bibitem[Sch]{schneider}
P. Schneider:
{\it Introduction to the Beilinson conjectures.}
In Beilinson's Conjectures on Special Values of $L$-Functions
(M. Rapoport, N. Schappacher and P. Schneider, ed),
Perspectives in Math. Vol.4, 1--35, 1988.
 
 
\bibitem[S]{Scholl}
    A. Scholl.
{\it  Integral elements in $K$-theory and products of modular curves.}
    In: Gordon, B. B., Lewis, J. D., M\"uller-Stach, S., Saito, S., Yui, N. (eds.)
    {The arithmetic and geometry of algebraic cycles, Banff, 1998},
    (NATO Sci.\ Ser.\ C Math.\ Phys.\ Sci., 548), pp.\ 467--489, Dordrecht, Kluwer, 2000. 

\bibitem[Sl]{slater} L. J. Slater, {\it Generalized hypergeometric functions}, Cambridge Univ. Press, Cambridge 1966. 
\bibitem[NIST]{NIST} {\it NIST Handbook of Mathematical Functions. }
Edited by Frank W. J. Olver, Daniel W. Lozier, Ronald F. Boisvert and Charles W. Clark. 
Cambridge Univ. Press, 2010.
\end{thebibliography}
\end{document}